\numberwithin{equation}{section}
\newtheorem{theorem}{Theorem}[section]      	      	                        
\newtheorem{definition}[theorem]{Definition}   	      	                      
\newtheorem{lemma}[theorem]{Lemma}     	       	      	      	      	      
\newtheorem{proposition}[theorem]{Proposition} 	      	      	      	      
\theoremstyle{remark}
\newtheorem{remark}[theorem]{Remark}                                                  
\numberwithin{equation}{section}                                              
\numberwithin{theorem}{section}                                               
\numberwithin{figure}{section}                                                
\newcommand{\mc}[1]{\mathcal{#1}}                                             
\newcommand{\R}{\mathbb{R}}                                                   
\newcommand{\Sph}{\mathbb{S}}                                                 
\newcommand{\snabla}{\slashed{\nabla}}
\newcommand{\pd}{\partial}
\newcommand{\nab}{\nabla}                                                        
\newcommand{\al}{\alpha}
\newcommand{\be}{\beta}
\newcommand{\de}{\delta}
\newcommand{\ep}{\varepsilon}
\newcommand{\ka}{\kappa}
\newcommand{\la}{\lambda}
\newcommand{\De}{\Delta}
\newcommand{\Ga}{\Gamma}
\renewcommand\leq\leqslant
\renewcommand\geq\geqslant
\begin{document}

\title[Carleman estimates for waves with critically singular
potentials]{Carleman estimates with sharp weights\\
  and boundary observability for wave operators\\ with critically singular potentials}

\author{Alberto Enciso}
\address{Instituto de Ciencias Matem\'aticas, Consejo Superior de Investigaciones Cient\'\i ficas, 28049 Madrid, Spain}
\email{aenciso@icmat.es}

\author{Arick Shao}
\address{School of Mathematical Sciences,
Queen Mary University, London, United Kingdom}
\email{a.shao@qmul.ac.uk}

\author{Bruno Vergara}
\address{Instituto de Ciencias Matem\'aticas, Consejo Superior de Investigaciones Cient\'\i ficas, 28049 Madrid, Spain}
\email{bruno.vergara@icmat.es}

\begin{abstract}
  We establish a new family of Carleman inequalities for wave
  operators on cylindrical spacetime domains containing a potential
  that is critically singular, diverging as an inverse square on all
  the boundary of the domain.  These estimates are sharp in the sense
  that they capture both the natural boundary conditions and the
  natural $H^1$-energy.  The proof is based around three key
  ingredients: the choice of a novel Carleman weight with rather
  singular derivatives on the boundary, a generalization of the
  classical Morawetz inequality that allows for inverse-square
  singularities, and the systematic use of derivative operations
  adapted to the potential. As an application of these estimates, we
  prove a boundary observability property for the associated wave
  equations.
\end{abstract}

\maketitle

\section{Introduction}

Our objective in this paper is to derive Carleman estimates for wave operators with critically singular potentials, that is, with potentials that scale like the principal part of the operator.
More specifically, we are interested in the case of potentials that diverge as an inverse square on a convex hypersurface.

For the present paper, we consider the model operator
\begin{equation}\label{operator}
\Box_\ka := \square + \frac{ \ka ( 1 - \ka ) }{(1-|x|)^2} \text{,}
\end{equation}
where $\square:=-\pd_{tt}+\De$ is the wave operator, the spatial domain is the unit ball $B_1$ of $\R^n$, and the constant parameter $\ka \in \R$ measures the strength of the potential.

\subsection{Background}

To understand why we say ``sharp'', let us consider the Cauchy problem associated with this operator,
\begin{align}\label{Cauchyprob}
  \begin{split}
\Box_\kappa u = 0 \quad &\text{in } ( -T, T ) \times B_1 \text{,} \\
  u(0,x)=u_0(x) \text{,} \qquad &\pd_tu(0,x)=u_1(x) \text{.}
  \end{split}
\end{align}
In spherical coordinates, the equation reads as
\[
- \pd_{tt} u + \pd_{rr} u + \frac{n-1}{r} \pd_r u + \frac{ \ka ( 1 - \ka ) }{ (1-r)^2 } u + \frac{1}{r^2} \De_{ \Sph^{n-1} } u=0 \text{,}
\]
where $\De_{\Sph^{n-1}}$ denotes the Laplacian on the unit sphere. The potential is critically singular at $r=1$, where, according to the classical theory of Frobenius for ODEs, the characteristic exponents of this equation are $\ka$ and $1-\ka$.
Therefore, if $\ka$ is not a half-integer (which ensures that logarithmic branches will not appear), solutions to the equation are expected to behave either like $(1-r)^{\ka}$ or $(1-r)^{1-\ka}$ as $r\nearrow1$.

As one can infer by plugging these powers in the energy associated with \eqref{Cauchyprob},
\begin{equation}\label{energy1}
\int_{ \{ t \} \times B_1 } \left\{ ( \pd_t u )^2+ (1-r)^{2\ka} \left| \nabla_{ x } [ (1-r)^{-\ka} u ] \right|^2 \right\} \text{,}
\end{equation}
the equation admits exactly one finite-energy solution when $\ka\leq-\frac12$, no finite-energy
solutions when $\ka\geq \frac12$, and infinitely many
finite-energy solutions when
\begin{equation}\label{interval}
-\frac12<\ka<\frac12\,.
\end{equation}

In this range \eqref{interval} of the parameter, which we consider in this paper, one must impose a (Dirichlet, Neumann, or Robin) boundary condition on $( -T, T ) \times \pd B_1$.
This is constructed in terms of the natural Dirichlet and Neumann traces, which now include weights and are defined as the limits
\begin{equation}\label{BCs}
\mc{D}_\ka u := (1-r)^{-\ka} u|_{r=1} \text{,} \qquad \mc{N}_\ka u := -(1-r)^{2\ka} \pd_r [ (1-r)^{-\ka} u ] |_{r=1} \text{.}
\end{equation}

Notice that singular weights depending on~$\ka$ appear everywhere in
this problem, and that all the associated quantities reduce to the standard ones in the absence of the
singular potential, i.e., when $\ka=0$.
A more detailed discussion of the boundary asymptotics of solutions to \eqref{Cauchyprob} is given in the next section.

The Carleman estimates that we will derive in this paper are sharp, in that the weights that appear capture both the optimal decay rate of the
solutions near the boundary, as well as the natural energy~\eqref{energy1} that appears in the well-posedness theory for the equation.
As we will see, this property is not only desirable but also
essential for applications such as boundary observability.

\subsection{Some Existing Results}

The dispersive properties of wave equations with potentials that diverge as an inverse square at one point~\cite{Vega,Burq2} or an a (timelike) hypersurface~\cite{Bachelot} have been thoroughly studied, as critically singular potentials are notoriously difficult to analyze.
Moreover, a well-posedness theory for a diverse family of boundary conditions was developed for the range \eqref{interval} in \cite{Warnick}.

In the case of one spatial dimension, the observability and controllability of wave equations with critically singular potentials have also received considerable attention, in the guise of the degenerate wave equation
$$
\pd_{tt}v-\pd_z (z^\al \pd_zv)=0\,,
$$
where the variable $z$ takes values in the positive half-line and the
parameter~$\al$ ranges over the interval~$(0,1)$; see~\cite{Gueye} and the references therein.
Indeed, it is not difficult to show that one can relate equations in this form with the operator $\Box_\ka$ in one dimension through a suitable change of variables, with the parameter $\ka$ being now some function of the power $\al$.
The methods employed in those references, which rely on the spectral analysis of a one-dimensional Bessel-type operator, provide very precise observability and controllability results.

Another fruitful strategy for obtaining observability inequalities for a wide variety of PDEs is via Carleman-type estimates; see \cite{Tataru0, Tataru_Thesis} for some earliest applications, as well as \cite{LTX, Zhang} for wave equations.
On the other hand, no related Carleman estimates that are applicable to observability results for $\Box_\kappa$ have been found.
This manifests itself in two important limitations: firstly, the available inequalities are not robust under
perturbations on the coefficients of the equation, and secondly, the method of proof cannot be extended to higher-dimensional situations.

Recent results for different notions of observability for parabolic equations with inverse square potentials, which are based on Carleman and multiplier methods, can be found, e.g., in \cite{BZuazua, VZuazua}.
Related questions for wave equations with singularities all over the boundary have been presented as very challenging in the open problems section of \cite{BZuazua}.
As stressed there, the boundary singularity makes the multiplier approach extremely tricky.

In general, one would not expect Carleman estimates to behave well with singular potentials such as $\ka (1 - \ka) ( 1 - r )^{-2}$.
Since the singularity in the potential scales just as $\Box$, there is no hope in absorbing it into the estimates by means of a perturbative argument.
Indeed, Carleman estimates generally assume~\cite{DosSantos, KT3, Tataru} that the potential is at least in $L^{ (n + 1) / 2 }$, but this condition is not satisfied here.

Consequently, we must view this singular potential as a principal term and instead derive a Carleman estimate for the modified wave operator $\Box_\kappa$ in \eqref{operator}.
Such estimates for other modified wave operators involving lower-derivative terms have been obtained, for instance, in \cite{BBE, LLZ}.
However, a key difference in the present situation is the specially weighted forms \eqref{BCs} of our natural boundary traces.
In particular, to capture the Neumann trace, our Carleman estimates must also involve weights that become singular at the boundary $( -T, T ) \times \partial B_1$.

Carleman estimates with degenerating weights have been applied extensively in the context of strong unique continuation problems for PDEs.
Examples in the literature include \cite{Arons, Carleman, KT, Sogge} for elliptic equations and \cite{EF, KT2, LO} for parabolic equations; see also \cite{AS, ASS} for analogous problems for hyperbolic equations.
On the other hand, the weights used here will be very different in nature to those from strong unique continuation results, since we will require degeneracies at a very specific power in order to pick out the Neumann traces described in \eqref{BCs}.

Finally, let us mention that a setting which is closely related to ours is that of linear wave equations on asymptotically anti-de Sitter spacetimes, which are conformally equivalent to analogues of \eqref{operator} on curved backgrounds.
It is worth mentioning that waves on anti-de Sitter spaces have attracted
considerable attention in the recent years due to their connection to cosmology, see e.g.~\cite{Bachelot, Vergara, Enciso,HS, Warnick} and the
references therein. 

Carleman estimates for linear waves were established in this asymptotically anti-de Sitter setting in \cite{HS, HS2}, for the purposes of studying their unique continuation properties from the conformal boundary.
In particular, these estimates capture the natural Dirichlet and Neumann data (i.e., the analogues of \eqref{BCs}).
On the other hand, the Carleman estimates in \cite{HS, HS2} are local in nature and apply only to a neighborhood of the conformal boundary, and they do not capture the naturally associated $H^1$-energy.
As a result, these estimates would not translate into corresponding observability results.

\subsection{The Carleman Estimates}

The main result of the present paper is a novel family of Carleman inequalities for the operator~\eqref{operator} that capture both the natural boundary weights and the natural $H^1$-energy described above.
To the best of our knowledge, these are the first available Carleman estimates for an operator with such a strongly singular potential that also captures the natural boundary data and energy.
Moreover, our estimates hold in all spatial dimensions, except for $n = 2$.

A simplified version of our main estimates can be stated as follows:

\begin{theorem}\label{T.Carleman0}
Let $B_1$ denote the unit ball in $\R^n$, with $n \neq 2$, and fix $- \frac{1}{2} < \kappa < 0$.
Moreover, let $u: ( -T, T ) \times B_1 \rightarrow \R$ be a smooth function, and assume:
\begin{enumerate}[i)]
\item The Dirichlet trace $\mc{D}_\kappa u$ of $u$ vanishes.

\item $u$ ``has the boundary asymptotics of a sufficiently regular, finite energy solution of \eqref{Cauchyprob}".
In particular, the Neumann trace $\mc{N}_\kappa u$ of $u$ exists and is finite.

\item There exists $\delta > 0$ such that $u (t) = 0$ for all $T - \delta \leq |t| < T$.
\end{enumerate}
Then, for $\la \gg 1$ large enough, independently of $u$, the following inequality holds:
\begin{align} \label{Carleman0}
&\la \int_{ ( -T, T ) \times \pd B_1 } e^{ 2 \lambda f } ( \mc{N}_\ka u)^2 + \int_{ ( -T, T ) \times B_1}
                                  e^{2\la f}  (\Box_\kappa u )^2 \\
\notag &\quad  \gtrsim \la \int_{ ( -T,T ) \times B_1 } e^{ 2 \la f}
         \Big[ (\pd_t u)^2+(1-|x|)^{2\ka}\,\big| \nabla_{ x } [ (1-|x|)^{-\ka} u ] \big|^2 \Big]\\
\notag &\quad\qquad + | \kappa | \la^3 \int_{ ( -T,T ) \times B_1 } e^{ 2 \lambda f } ( 1 - |x| )^{6\ka-1} u^2 \text{,}
\end{align}
where $f$ is the weight
\begin{equation} \label{weight}
f(t, x) := -\frac{1}{1+2\ka}(1-|x|)^{1+2\ka}-ct^2 \text{,}
\end{equation}
with a suitably chosen positive constant $c$.
\end{theorem}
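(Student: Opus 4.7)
The plan is to proceed by the classical multiplier approach to Carleman estimates, modified systematically to respect the critical inverse-square singularity of $\Box_\kappa$. The first preparatory step I would take is to rewrite the operator in terms of the $\kappa$-adapted derivatives $\tilde D_r := (1-r)^{\kappa}\partial_r[(1-r)^{-\kappa}\,\cdot\,]$, i.e., to consider $w := (1-r)^{-\kappa}u$ as the natural unknown. In these variables the critical potential is absorbed into the principal part, the remaining coefficients behave like integrable powers of $1-r$ near the boundary, and the weighted traces $\mc{D}_\ka u$, $\mc{N}_\ka u$ collapse to the ordinary Dirichlet and Neumann traces of $w$, which is essential for the boundary analysis below.

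I would then perform the standard conjugation. Setting $v := e^{\lambda f}w$, writing $e^{\lambda f}\Box_\kappa(e^{-\lambda f}\,\cdot\,) = S + A$ with $S$ formally self-adjoint and $A$ antisymmetric, and squaring gives the identity
\begin{equation*}
\int e^{2\lambda f}(\Box_\kappa u)^2 \,=\, \int (Sv)^2 + \int (Av)^2 + \int \langle [S,A]v,v\rangle + \text{(boundary terms)}.
\end{equation*}
The weight $f = -(1+2\kappa)^{-1}(1-|x|)^{1+2\kappa} - ct^2$ is chosen so that $\partial_r f = (1-r)^{2\kappa}$ matches the scaling of $\tilde D_r$: the quadratic-in-$\lambda$ piece of the commutator then gives a positive combination of $(\partial_t v)^2$ and $|\tilde D v|^2$, whose time part is made coercive by tuning the pseudoconvexity constant $c$, while the cubic-in-$\lambda$ piece has the schematic form $\lambda^3|\nabla f|^2\,\square f \sim \lambda^3\kappa(1-r)^{6\kappa-1}$, producing the final bulk term $|\kappa|\lambda^3\int e^{2\lambda f}(1-r)^{6\kappa-1}u^2$ on the right-hand side of \eqref{Carleman0}.

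To convert the commutator quadratic form back into the natural energy in the variable $u$ and to absorb cross terms generated along the way, I would invoke a Hardy/Morawetz-type inequality adapted to inverse-square weights. This is the ingredient that forces the restriction $n\neq 2$, since the radial version of this weighted inequality degenerates precisely in two spatial dimensions. The temporal boundary terms at $|t|=T$ vanish by hypothesis~(iii); the spatial boundary at $r=1$ reduces, in the adapted variables, to pairings of $\mc{D}_\ka u$ and $\mc{N}_\ka u$ against $e^{2\lambda f}$ and $\lambda e^{2\lambda f}$, so the assumption $\mc{D}_\ka u=0$ in~(i) leaves only the desired $\lambda\int e^{2\lambda f}(\mc{N}_\ka u)^2$ contribution.

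The main obstacle, and the reason the standard Carleman machinery does not apply directly, is precisely this boundary extraction. A naive multiplier computation produces boundary expressions of the form $(1-r)^{2\kappa}u\,\partial_r u$ and $\lambda(1-r)^{2\kappa}u^2$ at $r=1$, each factor of which is individually divergent when $\kappa<0$. To handle these one must expand to sufficiently high order in $1-r$ using the boundary asymptotics provided by hypothesis~(ii), verify that the leading singular contributions cancel among themselves, and recognise the remaining finite piece as $(\mc{N}_\ka u)^2$. Coordinating this cancellation with the particular choice of $f$, the parameter $c$, and the Hardy step, uniformly in $\lambda$, is what makes the argument delicate.
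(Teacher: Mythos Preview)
Your proposal is correct and follows essentially the same route as the paper: the paper also works with the $\kappa$-twisted derivative $D = y^\kappa\nabla y^{-\kappa}$ (equivalently, your change of variable $w=y^{-\kappa}u$), conjugates to $v=e^{\lambda f}u$, and extracts positivity via a Morawetz-type multiplier inequality (Proposition~\ref{T.multineq}) combined with a pointwise Hardy inequality (Proposition~\ref{P.Hardy}), with the boundary limits handled through the admissibility hypothesis exactly as you outline. One small imprecision worth flagging: $\mc{N}_\kappa u$ is $y^{2\kappa}\partial_r w|_{r=1}$ rather than the ordinary Neumann trace of $w$, and it is precisely this residual weight that the exponent $1+2\kappa$ in $f$ (giving $\partial_r f = y^{2\kappa}$) is designed to match at the boundary.
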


A more precise, and slightly stronger, statement of our main Carleman estimates is given further below in Theorem \ref{T.Carleman}.


\begin{remark}
Note that in Theorem \ref{T.Carleman0}, we restricted our strength parameter $\kappa$ to the range $- \frac{1}{2} < \kappa < 0$.
This was imposed for several reasons:
\begin{enumerate}[i)]
\item First, a restriction to the values \eqref{interval} was needed, as this is the range for which a robust well-posedness theory exists \cite{Warnick} for the equation \eqref{Cauchyprob}.

\item The case $\kappa = 0$ is simply the standard free wave equation, for which the existence of Carleman and observability estimates is well-known.

\item On the other hand, the aforementioned spectral results \cite{Gueye} in the $(1 + 1)$-dimensional setting suggest that the analogue of \eqref{Carleman0} is false when $\kappa > 0$.
\end{enumerate}
\end{remark}

\begin{remark}
The constant $c$ in \eqref{weight} is closely connected to the total timespan needed for an observability estimate to hold; see Theorem \ref{T.Observability0} below.
In Theorem~\ref{T.Carleman0}, this $c$ depends on $n$, as well as on $\kappa$ when $n = 3$.
\end{remark}

\begin{remark}
The precise formulation of $u$ in Theorem \ref{T.Carleman0} having the ``expected boundary asymptotics of a solution of \eqref{Cauchyprob}" is given in Definition \ref{admissible} and is briefly justified in the discussion following Definition \ref{admissible}.
\end{remark}

\begin{remark}
One can further strengthen \eqref{Carleman0} to include additional positive terms on the right-hand side that depend on $n$; see Theorem \ref{T.Carleman}.
\end{remark}

\subsection{Ideas of the Proof}

We now discuss the main ideas behind the proof of Theorem \ref{T.Carleman0} (as well as the more precise Theorem \ref{T.Carleman}).
In particular, the proof is primarily based around three ingredients.

The first ingredient is to adopt derivative operations that are well-adapted to our operator $\Box_\kappa$.
In particular, we make use of the ``twisted" derivatives that were pioneered in \cite{Warnick}.
The main observation here is that $\Box_\kappa$ can be written as
\[
\Box_\kappa = - \bar{D} D + \text{l.o.t.} \text{,}
\]
where $D$ is the conjugated (spacetime) derivative operator,
\[
D = D_{ t, x } = ( 1 - | x | )^\kappa \nabla_{ t, x } ( 1 - | x | )^{ - \kappa } \text{,}
\]
where $-\bar{D}$ is the ($L^2$-)adjoint of $D$, and where ``l.o.t." represents lower-order terms that can be controlled by more standard means.

As a result, we can view $D$ as the natural derivative operation for $\Box_\kappa$.
For instance, the twisted $H^1$-energy \eqref{energy1} associated with the Cauchy problem \eqref{Cauchyprob} is best expressed purely in terms
of $D$ (in fact, this energy is conserved for the equation $\bar{D} D u = 0$).
Similarly, in our Carleman estimates \eqref{Carleman0} and their proofs, we will always work with $D$-derivatives, rather than the usual derivatives, of $u$.
This helps us to better exploit the structure of $\Box_\kappa$.

The second main ingredient in the proof of Theorem \ref{T.Carleman0} is the classical Morawetz multiplier estimate for the wave equation.
This estimate was originally developed in \cite{Morawetz} in order to establish integral decay properties for waves in $3$ spatial dimensions.
Analogous estimates hold in higher dimensions as well; see \cite{Tao}, as well as \cite{Keith} and references therein for more recent extensions of Morawetz estimates.

At the heart of the proof of Theorem \ref{T.Carleman0} lies a generalization of the classical Morawetz estimate from $\Box$ to $\Box_\kappa$.
In keeping with the preceding ingredient, we derive this inequality by using the aforementioned twisted derivatives in the place of the usual derivatives.
This produces a number of additional singular terms, which we must arrange so that they have the required positivity.

Finally, our generalized Morawetz bound is encapsulated within a larger Carleman estimate, which is proved using geometric multiplier arguments (see, e.g., \cite{AS, HS, HS2, IK, LTX}).
Again, we adopt twisted derivatives throughout this process, and we must obtain positivity for many additional singular terms that now appear.

\begin{remark}
That Theorem \ref{T.Carleman0} fails to hold for $n = 2$ can be traced to the fact that the classical Morawetz breaks down for $n = 2$.
In this case, the usual multiplier computations yield a boundary term at $r = 0$ that is divergent.
\end{remark}

\begin{remark}
Both the Carleman estimates \eqref{Carleman0} and the underlying Morawetz estimates crucially depend on the domain being spherically symmetric.
As a result, Theorem \ref{T.Carleman0} only holds when the spatial domain is an open ball.
We defer questions of whether Theorem \ref{T.Carleman0} extends more general domains to future papers.
\end{remark}

\subsection{The Carleman Weight}

For our estimate \eqref{Carleman0}, we make use of a novel Carleman weight \eqref{weight} that is especially adapted to the operator $\Box_\kappa$.

Recall that in the standard Carleman-based proofs of observability for wave equations, one employs Carleman weights of the form
\[
f_\ast ( t, x ) = | x - x_0 |^2 - c t^2 \text{,} \qquad 0 < c < 1 \text{.}
\]
Here, the term $| x - x_0 |^2$ can be roughly interpreted as the estimate being centered about the point $x_0$.
In contrast, in \eqref{weight}, the spatial term of $f$ is replaced by a power of $1 - | x |$.
This can be viewed as our estimate being centered about the whole boundary $\partial B_1$, where $\Box_\kappa$ becomes singular.

The next point of interest is the exponent $1 + 2 \kappa$ in \eqref{weight}.
Such a power, which leads to rather singular terms at $r = 1$, seems necessary in our estimates in order to extract the Neumann boundary data, which contains a specific power of $1 - | x |$.

We also remark that the weight $f$ in \eqref{weight} is strongly pseudo-convex (as defined in \cite[Definition 28.3.1]{Hormander}) with respect to the standard wave operator $\Box$.
As is well-known, this is necessary in order for such a Carleman-type estimate to hold.
In our current context, the pseudo-convexity is captured by the quantity $\nabla^2 f + z \cdot g$ from our multiplier identity \eqref{gralmult}, which can be shown to be positive-definite in the directions tangent to the level sets of $f$; see also Remark \ref{rmk.pseudoconvex}.

In fact, the most difficult obstructions to our Carleman estimate arise not from pseudo-convexity.
(One can see that $f$ becomes infinitely pseudo-convex at the boundary $( -T, T ) \times \partial B_1$.)
Rather, the main difficulty comes from ensuring that the key singular bulk terms arising from the generalized multiplier estimates all possess good sign.
For this, we need more than the pseudo-convexity of the Carleman weight; this is the reason we restrict our analysis to the spatial domain $B_1$.

\subsection{Observability}

The breadth of applications of Carleman estimates to a wide range of PDEs \cite{DZZ, Tataru1} is remarkable.
Examples include unique continuation, control theory, inverse problems, as well as showing the absence of embedded eigenvalues in the continuous spectrum of Schr\"{o}dinger operators.

In this paper, we demonstrate one particular consequence of Theorem~\ref{T.Carleman0}: the boundary observability of linear waves involving a critically singular potential.
Roughly speaking, a boundary observability estimate shows that the energy of a wave confined in a bounded region can be estimated quantitatively only by measuring its boundary data over a large enough time interval.

The key point is again that our Carleman estimates \eqref{Carleman0} capture the natural boundary data and energy associated with our singular wave operator.
As a result of this, Theorem \ref{T.Carleman0} can be combined with
standard arguments in order to prove the following rough
statement:~solutions to the wave equation with a critically singular
potential on the boundary of a cylindrical domain satisfy boundary observability estimates, provided that the observation is made over a large enough timespan. 

A rigorous statement of this observability property is given in the subsequent theorem.
Notice that, due to energy estimates that we will show later, it is
enough to control the twisted $H^1$-norm of the solution at time zero:

\begin{theorem}\label{T.Observability0}
Let $B_1$, $n$, and $\kappa$ be as in Theorem \ref{T.Carleman0}.
Moreover, let $u$ be a smooth and real-valued solution of the wave equation
\begin{equation}
\label{Obs_wave_0} \Box_\ka u = X \cdot D u + V u
\end{equation}
on the cylinder $( -T, T ) \times B_1$, where $X$ is a bounded (spacetime) vector field, and where $V$ is a bounded scalar potential.
Furthermore, suppose $u$ satisfies:
\begin{enumerate}[i)]
\item $\mc{D}_\kappa u = 0$.

\item $u$ ``has the boundary asymptotics of a sufficiently regular, finite energy solution of \eqref{Obs_wave_0}".
In particular, the Neumann trace $\mc{N}_\kappa u$ of $u$ exists and is finite.
\end{enumerate}
Then, for sufficiently large $T$, the following observability estimate holds for $u$:
\begin{equation}\label{Obs0}
\int_{ ( -T,T ) \times \pd B_1 } ( \mc{N}_\ka u )^2 \gtrsim \int_{ \{ 0 \} \times B_1 }\Big[ ( \pd_t u )^{2} + | (1 - |x| )^\kappa \nabla_x [ (1-|x|)^{-\ka} u ] |^2 + u^2 \Big] \,.
\end{equation}
\end{theorem}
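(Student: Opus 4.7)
The plan is to apply Theorem \ref{T.Carleman0} to a temporally cutoff version of $u$, convert the resulting weighted inequality into a pointwise-in-time energy bound by restricting the right-hand side to a short slab around $\{t = 0\}$, and then invoke a twisted energy estimate for \eqref{Obs_wave_0} to transfer the bound to $t = 0$. Concretely, fix scales $0 < T_\ast < T_0 < T_1 < T - \delta$ for some small margin $\delta > 0$, pick a smooth cutoff $\chi = \chi(t)$ with $\chi \equiv 1$ on $[-T_0, T_0]$ and $\chi \equiv 0$ for $|t| \geq T_1$, and set $v := \chi u$. Since $\chi$ depends only on $t$, $v$ inherits $\mc{D}_\kappa v = 0$, has Neumann trace $\chi \mc{N}_\kappa u$, and vanishes near $|t| = T$, so the hypotheses of Theorem \ref{T.Carleman0} are met. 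A brief computation using \eqref{Obs_wave_0} yields
\begin{equation*}
\Box_\kappa v = X \cdot Dv + V v + R, \qquad \mathrm{supp}\,R \subset \{ T_0 \leq |t| \leq T_1 \},
\end{equation*}
where $R$ is linear in $u$ and $\pd_t u$ with coefficients controlled by $\|\chi'\|_\infty + \|\chi''\|_\infty$ and $\|X\|_\infty$.

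Plugging $v$ into \eqref{Carleman0} and using $(\Box_\kappa v)^2 \lesssim |Dv|^2 + v^2 + R^2$ together with the boundedness of $X$ and $V$, the $|Dv|^2$ and $v^2$ contributions are absorbed into the right-hand side of \eqref{Carleman0} for $\lambda$ large enough; for $v^2$ this uses $(1-|x|)^{6\kappa - 1} \geq 1$ on $B_1$ when $\kappa < 0$, so the $|\kappa|\lambda^3$-weighted singular term dominates. What remains is the schematic inequality
\begin{equation*}
\lambda \int_{(-T,T) \times \pd B_1} e^{2\lambda f} (\mc{N}_\kappa u)^2 + \int e^{2\lambda f} R^2 \gtrsim \lambda \int e^{2\lambda f} \chi^2 \big[ (\pd_t u)^2 + (1-|x|)^{2\kappa} |\nabla_x[(1-|x|)^{-\kappa} u]|^2 + u^2 \big].
\end{equation*}

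Because $1 + 2\kappa > 0$ and the spatial part of $f$ is nonpositive, one has $\sup_{\mathrm{supp}\,R} f \leq - c T_0^2$, whereas $\inf_{|t| \leq T_\ast} f \geq - (c T_\ast^2 + (1+2\kappa)^{-1})$. Restricting the right-hand side to $|t| \leq T_\ast$ (where $\chi \equiv 1$), bounding $R$ in spacetime $L^2$ by a twisted energy inequality for \eqref{Obs_wave_0} with growth at most $e^{CT} E_0$ (where $E_0$ denotes the right-hand side of \eqref{Obs0}), and applying the matching lower bound $\int_{|t| \leq T_\ast} [\mathrm{energy}] \gtrsim T_\ast e^{-C T_\ast} E_0$, we arrive at
\begin{equation*}
\int_{(-T,T) \times \pd B_1} (\mc{N}_\kappa u)^2 \;+\; \frac{T e^{CT}}{\lambda}\, e^{-2\lambda [c T_0^2 - c T_\ast^2 - (1+2\kappa)^{-1}]} E_0 \;\gtrsim\; E_0.
\end{equation*}
Provided $c T_0^2 > c T_\ast^2 + (1+2\kappa)^{-1}$, the second term on the left is absorbable for $\lambda$ sufficiently large, and \eqref{Obs0} follows.

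The main obstacle is the quantitative weight separation in the last step: the boundary-singular term $-\tfrac{1}{1+2\kappa}(1-|x|)^{1+2\kappa}$ of the Carleman weight \eqref{weight}, which is precisely what forces the Neumann trace to appear in \eqref{Carleman0}, contributes a fixed additive constant $(1+2\kappa)^{-1}$ at $|x|=1$ that has to be strictly dominated by the temporal gap $c(T_0^2 - T_\ast^2)$ generated by the cutoff. This tension is exactly the mechanism that forces $T$ to exceed an explicit threshold depending on $c$, $n$ and $\kappa$. A secondary technical point is the derivation of the twisted energy inequality for \eqref{Obs_wave_0}: the boundary asymptotics encoded in assumption (ii) must be strong enough to ensure that integration by parts at $|x|=1$ against the twisted derivative $D$ produces no spurious boundary contributions, so that Gronwall closes and yields the two-sided control of $E(t)$ in terms of $E_0$ used above.
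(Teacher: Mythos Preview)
Your proposal is correct and follows essentially the same approach as the paper's proof: apply the Carleman estimate to a temporally cut-off solution, split the $\Box_\kappa$-contribution into an absorbable piece and a commutator remainder $R$ supported where the cutoff varies, exploit the separation between $\sup_{\mathrm{supp}\,R} f$ and $\inf_{|t|\leq T_\ast} f$ to make the $R$-term small in $\lambda$, and close with a two-sided twisted $H^1$-energy estimate (Gr\"onwall). Your identification of the threshold condition $c(T_0^2-T_\ast^2)>(1+2\kappa)^{-1}$ as the origin of the required largeness of $T$ matches the paper's computation exactly; the paper phrases this as $\sup_{\{t=\pm T\}} f < -(1+2\kappa)^{-1} \leq \inf_{\{t=0\}} f$, which is the limiting case $T_\ast\to 0$, $T_0\to T$ of your inequality.
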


Again, a more precise (and slightly more general) statement of the observability property can be found in Theorem \ref{T.Observability}.

\begin{remark}
The required timespan $2 T$ in Theorem \ref{T.Observability0} can be shown to depend on $n$, as well as on $\kappa$ when $n = 3$.
This is in direct parallel to the dependence of $c$ in Theorem \ref{T.Carleman0}.
See Theorem \ref{T.Observability} for more precise statements.
\end{remark}

\begin{remark}
Once again, a precise statement of the expected boundary asymptotics for $u$ in Theorem \ref{T.Observability0} is given in Definition \ref{admissible}.
\end{remark}

\begin{remark}
If $\Box_\kappa$ in Theorem \ref{T.Observability0} is replaced by $\Box$ (that is, we consider non-singular wave equations), then observability holds for any $T > 1$.
This can be deduced from either the geometric control condition of \cite{BLR} (see also \cite{BG, Macia}) or from standard Carleman estimates \cite{BBE, LTX, Zhang}.
To our knowledge, the optimal timespan for the observability result in Theorem \ref{T.Observability0} is not known.
\end{remark}

\begin{remark}
For non-singular wave equations, standard observability results also involve observation regions that contain only part of the boundary \cite{BLR, BG, LTX, Lions1}.
On the other hand, as our Carleman estimates \eqref{Carleman0} are centered about the origin, they only yield observability results from the entire boundary.
Whether partial boundary observability results also hold for the singular wave equation in Theorem~\ref{T.Observability0} is a topic of further investigation.
\end{remark}

\subsection{Outline of the Paper}

In Section~\ref{S.asymptotics}, we list some definitions that will be pertinent to our setting, and we establish some general properties that will be useful later on.
Section~\ref{S.multipliers} is devoted to the multiplier inequalities that are fundamental to our main Theorem~\ref{T.Carleman0}.
In particular, these generalize the classical Morawetz estimates to wave equations with critically singular potentials.
In Section~\ref{S.Carleman}, we give a precise statement and a proof of our main Carleman estimates (see Theorem \ref{T.Carleman}).
Finally, our main boundary observability result (see Theorem \ref{T.Observability}) is stated and proved in Section \ref{S.Observability}.

\section{Preliminaries} \label{S.asymptotics}

In this section, we record some basic definitions, and we establish the notations that we will use in the rest of the paper.
In particular, we define weights that capture the boundary behavior of solutions to wave equations rendered by~$\Box_\ka$.
We also define twisted derivatives constructed using the above weights, and we recall their basic properties.
Furthermore, we prove pointwise inequalities in terms of these twisted derivatives that will later lead to Hardy-type estimates.

\subsection{The Geometric Setting}

Our background setting is the spacetime $\R^{ 1 + n }$.
As usual, we let $t$ and $x$ denote the projections to the first and the last $n$ components of $\R^{ 1 + n }$, respectively, and we let $r := | x |$ denote the radial coordinate.

In addition, we let $g$ denote the Minkowski metric on $\R^{ 1 + n }$.
Recall that with respect to polar coordinates, we have that
\[
g = - dt^2 + dr^2 + r^2 g_{ \Sph^{n-1} } \text{,}
\]
where  $g_{ \Sph^{n-1} }$ denotes the metric of the $(n-1)$-dimensional unit sphere.
Henceforth, we use the symbol $\nabla$ to denote the $g$-covariant derivative, while we use $\snabla$ to represent the induced angular covariant derivative on level spheres of $( t, r )$.
As before, the wave operator (with respect to $g$) is defined as
\[
\Box = g^{ \alpha \beta } \nabla_{ \alpha \beta } \text{.}
\] 

As it is customary, we use lowercase Greek letters for spacetime indices over $\R^{ n + 1 }$ (ranging from $0$ to $n$), lowercase Latin letters for spatial indices over $\R^n$ (ranging from $1$ to $n$), and uppercase Latin letters for angular indices over $\Sph^{ n - 1 }$ (ranging from $1$ to $n - 1$).
We always raise and lower indices using $g$, and we use the Einstein summation convention for repeated indices.

As in the previous section, we use $B_1$ to denote the open unit ball in $\R^n$, representing the spatial domain for our wave equations.
We also set
\begin{equation}
\label{domain} \mc{C} := ( -T, T ) \times B_1 \text{,} \qquad T > 0 \text{,}
\end{equation}
corresponding to the cylindrical spacetime domain.
In addition, we let
\begin{equation}
\label{domain_tbdry} \Gamma := ( -T, T ) \times \partial B_1
\end{equation}
denote the timelike boundary of $\mc{C}$.

To capture singular boundary behavior, we will make use of weights depending on the radial distance from $\partial B_1$.
Toward this end, we define the function
\begin{equation}
\label{y} y: \R^{ 1 + n } \rightarrow \R \text{,} \qquad y := 1 - r \text{.}
\end{equation}
From direct computations, we obtain the following identities for $y$:
\begin{align}
\label{y_id} \nabla^\alpha y \nabla_\alpha y = 1 \text{,} &\qquad \nabla^{ \alpha \beta } y \nabla_\alpha y \nabla_\beta y = 0 \text{,} \\
\notag \Box y = - ( n - 1 ) r^{-1} \text{,} &\qquad \nabla^\alpha y \nabla_\alpha ( \Box y ) = - ( n - 1 ) r^{-2} \text{,} \\
\notag \Box^2 y = ( n - 1 ) ( n - 3 ) r^{-3} \text{,} &\qquad \nabla^{ \alpha \beta } y \nabla_{ \alpha \beta } y = ( n - 1 ) r^{-2} \text{.}
\end{align}

\subsection{Twisted Derivatives}

From here on, let us fix a constant
\begin{equation}
\label{kappa} - \frac{1}{2} < \kappa < 0 \text{,}
\end{equation}
and let us define the twisted derivative operators
\begin{align}
\label{twisted} D \Phi &:= y^\kappa \nabla ( y^{ - \kappa } \Phi ) = \nabla \Phi - \frac{ \kappa }{ y } \nabla y \cdot \Phi \text{,} \\
\notag \bar{D} \Phi &:= y^{ - \kappa } \nabla ( y^\kappa \Phi ) = \nabla \Phi + \frac{ \kappa }{ y } \nabla y \cdot \Phi \text{,}
\end{align}
where $\Phi$ is any spacetime tensor field.
Observe that $- \bar{D}$ is the formal ($L^2$-)adjoint of $D$.
Moreover, the following (tensorial) product rules hold for $D$ and $\bar{D}$:
\begin{equation}
\label{prod_rule} D ( \Phi \otimes \Psi ) = \nabla \Phi \otimes \Psi + \Phi \otimes D \Psi \text{,} \qquad \bar{D} ( \Phi \otimes \Psi ) = \nabla \Phi \otimes \Psi + \Phi \otimes \bar{D} \Psi \text{.}
\end{equation}

In addition, let $\Box_y$ denote the $y$-twisted wave operator:
\begin{equation}
\label{Box_y} \Box_y := g^{ \alpha \beta } \bar{D}_\alpha D_\beta \text{.}
\end{equation}
A direct computation shows that $\Box_y$ differs from the singular wave operator $\Box_\kappa$ from \eqref{operator} by only a lower-order term.
More specifically, by \eqref{y_id} and \eqref{twisted},
\begin{align}
\label{Box_y_kappa} \Box_y &= \Box + \frac{ \kappa ( 1 - \kappa ) \cdot \nabla^\alpha y \nabla_\alpha y }{ y^2 } - \frac{ \kappa \cdot \Box y }{ y } \\
\notag &= \Box_\ka + \frac{ ( n - 1 )\kappa }{ r y } \text{.}
\end{align}

In particular, \eqref{Box_y_kappa} shows that, up to a lower-order correction term, $\Box_y$ and $\Box_\ka$ can be used interchangeably.
In practice, the derivation of our estimates will be carried out in terms of $\Box_y$, as it is better adapted to the twisted operators.

Finally, we remark that since $y$ is purely radial,
\[
D_t \phi = \nab_t \phi = \pd_t \phi \text{,} \qquad D_A \phi = \snabla_A \phi = \pd_A \phi
\]
for scalar functions $\phi$.
Thus, we will use the above notations interchangeably whenever
convenient and whenever there is no risk of confusion.
Moreover, we will write
\[
D_X \phi = X^\al D_\al \phi
\]
to denote derivatives along a vector field $X$.

\subsection{Pointwise Hardy Inequalities}

Next, we establish a family of pointwise Hardy-type inequalities in terms of the twisted derivative operator $D$:

\begin{proposition}\label{P.Hardy}
For any $q \in \R$ and any $u \in C^1 ( \mc{C} )$, the following holds:
\begin{align}
\label{hardy} y^{ q - 1 } ( D_r u )^2 &\geq \frac{1}{4} ( 2 \kappa + q - 2 )^2 y^{ q - 3 } \cdot u^2 - ( n - 1 ) \left( \kappa + \frac{ q - 2 }{2} \right) y^{ q - 2 } r^{-1} \cdot u^2 \\
\notag &\qquad - \nabla^\beta \left[ \left( \kappa + \frac{ q - 2 }{2} \right) y^{ q - 2 } \nabla_\beta y \cdot u^2 \right] \text{.}
\end{align}
\end{proposition}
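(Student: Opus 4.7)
The inequality is a pointwise Hardy estimate that should come from a well-chosen perfect square. My plan is to start from the obvious nonnegative quantity
\[
0 \leq y^{q-1} \left( D_r u - \frac{C}{y}\, u \right)^2, \qquad C := \kappa + \frac{q-2}{2},
\]
and massage the cross term into the form appearing on the right-hand side of \eqref{hardy}. The motivation for this choice of $C$ is that, since $D_r u = \partial_r u + \kappa y^{-1} u$, we have $D_r u - C y^{-1} u = \partial_r u - \tfrac{q-2}{2} y^{-1} u$, which is the standard square appearing in Euclidean Hardy estimates when one wants a $y^{q-3} u^2$ lower bound.

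Expanding the square gives
\[
y^{q-1} (D_r u)^2 \geq 2 C\, y^{q-2} u\, D_r u - C^2\, y^{q-3}\, u^2.
\]
The task is then to rewrite the cross term as a divergence plus controlled $y^{q-3} u^2$ and $y^{q-2} r^{-1} u^2$ terms. First, using the definition of $D_r$, I write
\[
2 C\, y^{q-2} u\, D_r u = C\, y^{q-2} \partial_r(u^2) + 2 C \kappa\, y^{q-3}\, u^2.
\]
Since $\nabla y$ is purely radial with $\nabla^\beta y\, \partial_\beta = -\partial_r$ in the spatial directions, I have $\partial_r(u^2) = -\nabla^\beta y\, \nabla_\beta(u^2)$, and the product rule yields
\[
- C\, y^{q-2} \nabla^\beta y\, \nabla_\beta(u^2) = -\nabla^\beta\!\left[ C\, y^{q-2} \nabla_\beta y \cdot u^2 \right] + C\, \nabla^\beta\!\left[ y^{q-2} \nabla_\beta y \right] u^2 .
\]
Now I invoke the identities \eqref{y_id}: $\nabla^\alpha y \nabla_\alpha y = 1$ and $\Box y = -(n-1)/r$ give
\[
\nabla^\beta\!\left[ y^{q-2} \nabla_\beta y \right] = (q-2)\, y^{q-3} - (n-1)\, y^{q-2}\, r^{-1}.
\]

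Collecting everything, the coefficient of $y^{q-3} u^2$ becomes $-C^2 + C(q-2) + 2C\kappa = -C^2 + 2C \cdot C = C^2 = \tfrac{1}{4}(2\kappa+q-2)^2$, while the coefficient of $y^{q-2} r^{-1} u^2$ is $-C(n-1) = -(n-1)(\kappa + (q-2)/2)$, matching \eqref{hardy} exactly. The only real difficulty is the bookkeeping of the three $y^{q-3} u^2$ contributions (from the $C^2$ square, the IBP on $\partial_r y$, and the $\kappa/y$ piece of $D_r$), which must conspire to produce precisely $C^2$; this cancellation is what pins down the choice $C = \kappa + (q-2)/2$. Once the algebra is arranged as above, the inequality follows pointwise, with equality iff $D_r u = C y^{-1} u$.
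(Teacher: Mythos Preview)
Your proof is correct and follows essentially the same approach as the paper. The paper's version writes the perfect square as $0 \leq (y^p \nabla^\alpha y D_\alpha u + b y^{p-1} u)^2$ with free parameters $p,b$, then sets $2p = q-1$ and optimizes to find $b = \kappa + \tfrac{q-2}{2}$; since $\nabla^\alpha y D_\alpha = -D_r$, this is exactly your square $y^{q-1}(D_r u - C y^{-1} u)^2$ with $C = b$, and the ensuing expansion is the same.
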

\begin{proof}

First, for any $p, b \in \R$, we have the inequality
\begin{align*}
0 &\leq ( y^p \cdot \nabla^\alpha y D_\alpha u + b y^{ p - 1 } \cdot u )^2 \\
&= y^{ 2 p } \cdot ( \nabla^\alpha y D_\alpha u )^2 + b^2 y^{ 2 p - 2 } \cdot u^2 + 2 b y^{ 2 p - 1 } \cdot u \nabla^\alpha y D_\alpha u \\
&= y^{ 2 p } \cdot ( D_r u )^2 + b ( b - 2 \kappa - 2 p + 1 ) y^{ 2 p - 2 } \cdot u^2 \\
&\qquad - b y^{ 2 p - 1 } \Box y \cdot u^2 + \nabla^\beta ( b y^{ 2 p - 1 } \nabla_\beta y \cdot u^2 ) \text{,}
\end{align*}
where we used \eqref{twisted} in the last step.
Setting $2 p = q - 1$, the above becomes
\begin{align*}
y^{ q - 1 } ( D_r u )^2 &\geq - b ( b - 2 \kappa - q + 2 ) y^{ q - 3 } \cdot u^2 + b y^{ q - 2 } \Box y \cdot u^2 \\
&\qquad - \nabla^\beta ( b y^{ q - 2 } \nabla_\beta y \cdot u^2 ) \text{.}
\end{align*}
Taking $b = \kappa + \frac{ q - 2 }{2}$ (which extremizes the above) yields \eqref{hardy}.
\end{proof}

\subsection{Boundary Asymptotics}

We conclude this section by discussing the precise boundary limits for our main results.
First, given $u \in C^1 ( \mc{C} )$, we define its Dirichlet and Neumann traces on $\Gamma$ with respect to $\Box_y$ (or equivalently, $\Box_\ka$) by
\begin{align}
\label{bddconds} \mc{D}_\kappa u: \Gamma \rightarrow \R \text{,} &\qquad \mc{D}_\kappa u: = \lim_{ r \nearrow 1 } ( y^{ -\kappa } u ) \text{,} \\
\notag \mc{N}_\kappa u: \Gamma \rightarrow \R \text{,} &\qquad \mc{N}_\kappa u := \lim_{ r \nearrow 1 } y^{ 2 \kappa } \partial_r ( y^{ - \kappa } u ) \text{.}
\end{align}
Note in particular that the formulas \eqref{bddconds} are directly inspired from \eqref{BCs}.

Now, the subsequent definition lists the main assumptions we will impose on boundary limits in our Carleman estimates and observability results:

\begin{definition} \label{admissible}
A function $u \in C^1 ( \mc{C} )$ is called \emph{boundary admissible} with respect to $\Box_y$ (or $\Box_\ka$) when the following conditions hold:
\begin{enumerate}[i)]
\item $\mc{N}_\kappa u$ exists and is finite.

\item The following Dirichlet limits hold for $u$:
\begin{equation}
\label{super_dirichlet} ( 1 - 2 \kappa ) \mc{D}_\kappa ( y^{ - 1 + 2 \kappa } u ) = - \mc{N}_\kappa u \text{,} \qquad \mc{D}_\kappa ( y^{ 2 \kappa } \partial_t u ) = 0 \text{.}
\end{equation}
\end{enumerate}
Here, the Dirichlet and Neumann limits are in an $L^2$-sense on $( -T, T ) \times \Sph^{ n - 1 }$.
\end{definition}

The main motivation for Definition \ref{admissible} is that \emph{it captures the expected boundary asymptotics for solutions of the equation $\Box_y u = 0$ that have vanishing Dirichlet data}.
(In particular, note that $u$ being boundary admissible implies $\mc{D}_\kappa u = 0$.)
To justify this statement, we must first recall some results from \cite{Warnick}.

For $u \in C^1 ( \mc{C} )$ and $\tau \in ( -T, T )$, we define the following twisted $H^1$-norms:
\begin{align}
\label{E1} E_1 [ u ] ( \tau ) &:= \int_{ \mc{C} \cap \{ t = \tau \} } ( | \partial_t u |^2 + | D_r u |^2 + | \snabla u |^2 + u^2 ) \text{,} \\
\bar{E}_1 [ u ] ( \tau ) &:= \int_{ \mc{C} \cap \{ t = \tau \} } ( | \partial_t u |^2 + | \bar{D}_r u |^2 + | \snabla u |^2 + u^2 ) \text{.}
\end{align}
Moreover, if $u \in C^2 ( \mc{C} )$ as well, then we define the twisted $H^2$-norm,
\begin{equation}
\label{E2} E_2 [ u ] ( \tau ) := \bar{E}_1 [ D_r u ] ( \tau ) + E_1 [ \partial_t u ] ( \tau ) + E_1 [ \snabla_t u ] ( \tau ) + E_1 [ u ] ( \tau ) \text{.}
\end{equation}
The results of \cite{Warnick} show that both $E_1 [ u ]$ and $E_2 [ u ]$ are natural energies associated with the operator $\Box_y$, in that their boundedness is propagated in time for solutions of $\Box_y u = 0$ with Dirichlet boundary conditions.

The following proposition shows that functions with uniformly bounded $E_2$-energy are boundary admissible, in the sense of Definition \ref{admissible}.
In particular, the preceding discussion then implies that boundary admissibility is achieved by sufficiently regular (in a twisted $H^2$-sense) solutions of the singular wave equation $\Box_y u = 0$, with Dirichlet boundary conditions.

\begin{proposition} \label{B.asymp}
Let $u \in C^2 ( \mc{C} )$, and assume that:
\begin{enumerate}[i)]
\item $\mc{D}_\kappa u = 0$.

\item $E_2 [ u ] ( \tau )$ is uniformly bounded for all $\tau \in ( -T, T )$.
\end{enumerate}
Then, $u$ is boundary admissible with respect to $\Box_y$, in the sense of Definition \ref{admissible}.
\end{proposition}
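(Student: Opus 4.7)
The plan is to use the fundamental theorem of calculus in $y = 1-r$, applied to carefully chosen $y$-weighted combinations of $u$, so that each boundary quantity in Definition \ref{admissible} is reconstructed from an interior integral whose tail near the boundary is controlled by the uniform $E_2$-bound. The backbone of the argument is the pair of algebraic identities that follow immediately from \eqref{twisted}: setting $v := y^{-\kappa} u$, one has
\[
\partial_r v = y^{-\kappa} D_r u, \qquad \partial_r(y^\kappa D_r u) = y^\kappa \bar D_r D_r u,
\]
and in particular $y^\kappa D_r u = y^{2\kappa}\partial_r(y^{-\kappa} u)$ is exactly the quantity whose boundary limit defines $\mc N_\kappa u$.

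First I would establish existence of $\mc N_\kappa u$. Integrating the second identity above from $r_1$ to $r_2$ and applying Cauchy--Schwarz in $r$, then integrating in $(t,\omega)$, yields
\[
\bigl\|(y^\kappa D_r u)|_{r_2}-(y^\kappa D_r u)|_{r_1}\bigr\|_{L^2((-T,T)\times \Sph^{n-1})}^2 \lesssim \Bigl(\int_{r_1}^{r_2} y^{2\kappa}\,dr\Bigr)\|\bar D_r D_r u\|_{L^2(\mc C)}^2.
\]
The prefactor tends to zero as $r_1,r_2\nearrow 1$ because $\kappa > -\tfrac12$ renders $y^{2\kappa}$ integrable near $y=0$, while the second factor is finite because $\bar D_r D_r u \in L^2(\mc C)$ via the $\bar E_1[D_r u]$ piece of $E_2$ and the assumed uniform bound. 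Hence $y^\kappa D_r u$ is Cauchy in $L^2((-T,T)\times \Sph^{n-1})$ as $r\nearrow 1$, and its limit is $\mc N_\kappa u$.

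To derive the two identities in \eqref{super_dirichlet}, I would integrate the first algebraic identity from $y=0$, using $v(0,\cdot)=\mc D_\kappa u = 0$, which gives the representation
\[
v(y,\cdot) = -\int_0^y y'^{-2\kappa}\bigl(y'^\kappa D_r u\bigr)(y',\cdot)\,dy'.
\]
The self-similar substitution $y' = s y$ turns this into $v(y)/y^{1-2\kappa} = -\int_0^1 s^{-2\kappa}(y^\kappa D_r u)(sy,\cdot)\,ds$, so Minkowski's integral inequality combined with the $L^2$-convergence $y^\kappa D_r u \to \mc N_\kappa u$ from the previous step and dominated convergence against $s^{-2\kappa}\in L^1([0,1])$ yield
\[
(1-2\kappa)\,\mc D_\kappa(y^{-1+2\kappa} u) = -\mc N_\kappa u,
\]
which is the first equation in \eqref{super_dirichlet}. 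Differentiating the above representation in $t$---legitimate by Fubini since $\partial_t(y^\kappa D_r u) = y^\kappa D_r\partial_t u \in L^2(\mc C)$ through the $E_1[\partial_t u]$ piece of $E_2$---gives $\partial_t v(y,\cdot) = -\int_0^y y'^{-\kappa} D_r\partial_t u(y',\cdot)\,dy'$, and Cauchy--Schwarz then produces
\[
\|y^{2\kappa}\partial_t v(y,\cdot)\|_{L^2}^2 \lesssim y^{2\kappa+1}\|D_r\partial_t u\|_{L^2(\mc C)}^2,
\]
which tends to zero as $y\to 0$ because $2\kappa+1 > 0$. Since $y^{2\kappa}\partial_t v = y^\kappa\partial_t u$, this is exactly the second equation $\mc D_\kappa(y^{2\kappa}\partial_t u) = 0$.

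The main technical point is making the Frobenius-style exponent arithmetic, heuristically immediate for $u \sim B\,y^{1-\kappa}$, rigorous as $L^2$-traces under only the modest regularity encoded in $E_2$. The self-similar rescaling $y'=sy$ is the key device: it renormalises every integral so that the critical exponent $1-2\kappa$ drops out naturally and so that the problem reduces to dominated convergence against the integrable kernel $s^{-2\kappa}$; the $\partial_t$--FTC commutation is a secondary but essential check, handled by Fubini once the required $L^2$-bound on $D_r\partial_t u$ is noted.
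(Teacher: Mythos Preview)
Your proposal is correct and follows essentially the same approach as the paper: fundamental theorem of calculus in $y$ combined with Cauchy--Schwarz and the integrability of $y^{2\kappa}$ to obtain $\mc{N}_\kappa u$, then Minkowski's inequality against the resulting $L^2$-convergence for the first limit in \eqref{super_dirichlet}, and the same FTC argument applied to $\partial_t u$ for the second. Your self-similar substitution $y'=sy$ and the direct differentiation of the integral representation in $t$ are clean repackagings of exactly the computations the paper carries out; the paper instead bounds by the supremum after Minkowski and separately verifies $\mc{D}_\kappa(\partial_t u)=0$ via a test-function pairing before invoking the FTC, but the content is the same.
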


\begin{proof}
Fix $\tau \in ( -T, T )$ and $\omega \in \Sph^{ n - 1 }$, and let $0 < y_1 < y_0 \ll 1$.
Applying the fundamental theorem of calculus and integrating in $y$ yields
\begin{align*}
y^{ 2 \kappa } \partial_r ( y^{ - \kappa } u ) |_{ ( \tau, 1 - y_1, \omega ) } - y^{ 2 \kappa } \partial_r ( y^{ - \kappa } u ) |_{ ( \tau, 1 - y_0, \omega ) } 
&= \int_{ y_1 }^{ y_0 } y^\kappa \bar{D}_r ( D_r u ) |_{ ( \tau, 1 - y, \omega ) } dy \text{,}
\end{align*}
where we have described points in $\bar{\mc{C}}$ using polar $( t, r, \omega )$-coordinates.

We now integrate the above over $\Gamma = ( -T, T ) \times \Sph^{ n - 1 }$, and we let $y_1 \searrow 0$.
In particular, observe that for $\mc{N}_\kappa u$ to be finite, it suffices to show that
\[
I := \int_\Gamma \left[ \int_0^{ y_0 } y^\kappa \bar{D}_r ( D_r u ) |_{ ( \tau, 1 - y, \omega ) } dy \right]^2 d \tau d \omega < \infty \text{.}
\]
However, by H\"older's inequality and \eqref{kappa}, we have
\[
I \leq \int_\Gamma \left[ \int_0^{ y_0 } y^{ 2 \kappa } dy \int_0^{ y_0 } | \bar{D}_r ( D_r u ) |^2 |_{ ( \tau, 1 - y, \omega ) } dy \right] d \tau d \omega \lesssim \int_{ -T }^T E_2 [u] ( \tau ) \, d \tau \text{.}
\]
Thus, the assumptions of the proposition imply that $I$, and hence $\mc{N}_\kappa u$, is finite.

Next, to prove the first limit in \eqref{super_dirichlet}, it suffices to show that
\begin{equation}
\label{B.asymp_1} J_{ y_0 } := \int_\Gamma \left( y^{ -1 + \kappa } u |_{ ( \tau, 1 - y_0, \omega ) } + \frac{1}{ 1 + 2 \kappa } \mc{N}_\kappa u |_{ ( \tau, \omega ) } \right)^2 d \tau d \omega \rightarrow 0 \text{,}
\end{equation}
as $y_0 \searrow 0$.
Since $\mc{D}_k u = 0$, the fundamental theorem of calculus implies
\begin{align*}
J_{ y_0 } &= \int_\Gamma \left[ - y_0^{ -1 + 2 \kappa } \int_0^{ y_0 } y^{ - 2 \kappa } y^{ 2 \kappa } \partial_r ( y^{ -\kappa } u ) |_{ ( \tau, 1 - y, \omega ) } dy + \frac{1}{ 1 + 2 \kappa } \mc{N}_\kappa u |_{ ( \tau, \omega ) } \right]^2 d \tau d \omega \\
&= \int_\Gamma \left\{ y_0^{ -1 + 2 \kappa } \int_0^{ y_0 } y^{ - 2 \kappa } [ y^{ 2 \kappa } \partial_r ( y^{ -\kappa } u ) |_{ ( \tau, 1 - y, \omega ) } - \mc{N}_\kappa u |_{ ( \tau, \omega ) } ] dy \right\}^2 d \tau d \omega \text{.}
\end{align*}
Moreover, the Minkowski integral inequality yields
\begin{align*}
\sqrt{ J_{ y_0 } } &\leq y_0^{ -1 + 2 \kappa } \int_0^{ y_0 } y^{ -2 \kappa } \left\{ \int_\Gamma [ y^{ 2 \kappa } \partial_r ( y^{ -\kappa } u ) |_{ ( \tau, 1 - y, \omega ) } - \mc{N}_\kappa u |_{ ( \tau, \omega ) } ]^2 d \tau d \omega \right\}^\frac{1}{2} dy \\
&\lesssim \sup_{ 0 < y < y_0 } \left\{ \int_\Gamma [ y^{ 2 \kappa } \partial_r ( y^{ -\kappa } u ) |_{ ( \tau, 1 - y, \omega ) } - \mc{N}_\kappa u |_{ ( \tau, \omega ) } ]^2 d \tau d \omega \right\}^\frac{1}{2} \text{.}
\end{align*}
By the definition of $\mc{N}_\kappa u$, the right-hand side of the above converges to $0$ when $y_0 \searrow 0$.
This implies \eqref{B.asymp_1}, and hence the first part of \eqref{super_dirichlet}.

For the remaining limit in \eqref{super_dirichlet}, we first claim that $\mc{D}_\kappa ( \partial_t u )$ exists and is finite.
This argument is analogous to the first part of the proof.
Note that since
\[
y^{ - \kappa } \partial_t u |_{ ( \tau, 1 - y_1, \omega ) } - y^{ - \kappa } \partial_t u |_{ ( \tau, 1 - y_0, \omega ) } = \int_{ y_1 }^{ y_0 } y^{ -\kappa } D_r \partial_t u |_{ ( \tau, 1 - y, \omega ) } dy \text{,}
\]
then the claim immediately follows from the fact that
\[
\int_\Gamma \left[ \int_0^{ y_0 } y^{ - \kappa } D_r \partial_t u |_{ ( \tau, 1 - y, \omega ) } dy \right]^2 d \tau d \omega \lesssim \int_{ -T }^T E_2 [u] ( \tau ) \, d \tau < \infty \text{.}
\]

Moreover, to determine $\mc{D}_\kappa ( \partial_t u )$, we see that
for any test function $\varphi \in C^\infty_0 ( \Gamma )$,
\[
\int_\Gamma \mc{D}_\kappa ( \partial_t u ) \cdot \varphi = - \lim_{ y \searrow 0 } \int_\Gamma y^{ - \kappa } u |_{ r = 1 - y } \cdot \partial_t \varphi = - \int_\Gamma \mc{D}_\kappa u \cdot \partial_t \varphi = 0 \text{.}
\]
It then follows that $\mc{D}_\kappa ( \partial_t u ) = 0$.

Finally, to prove the second limit of \eqref{super_dirichlet}, it suffices to show
\begin{equation}
\label{B.asymp_2} K_{ y_0 } := \int_\Gamma ( y^{ - \frac{1}{2} } \partial_t u )^2 |_{ ( \tau, 1 - y_0, \omega ) } d \tau d \omega \rightarrow 0 \text{,} \qquad y_0 \searrow 0 \text{.}
\end{equation}
Using that $\mc{D}_\ka ( \partial_t u ) = 0$ along with the fundamental theorem of calculus yields
\begin{align*}
K_{ y_0 } &= \int_\Gamma \left[ y_0^{ -\frac{1}{2} + \kappa } \int_0^{ y_0 } y^{ - \kappa } D_r \partial_t u |_{ ( \tau, 1 - y, \omega ) } dy \right]^2 d \tau d \omega \\
&\leq y_0^{ -1 + 2 \kappa } \int_\Gamma \left[ \int_0^{ y_0 } y^{ - 2 \kappa } d y \int_0^{ y_0 } ( D_r \partial_t u )^2 |_{ ( \tau, 1 - y, \omega ) } dy \right] d \tau d \omega \\
&\lesssim \int_0^{ y_0 } \int_\Gamma ( D_r \partial_t u )^2 |_{ ( \tau, 1 - y, \omega ) } d \tau d \omega dy \text{.}
\end{align*}
The integral on the right-hand side is (the time integral of) $E_2 [ u ] ( \tau )$, restricted to the region $1 - y_0 < r < 1$.
Since $E_2 [ u ] ( \tau )$ is uniformly bounded, it follows that $K_{ y_0 }$ indeed converges to zero as $y_0 \searrow 0$, completing the proof.
\end{proof}

\begin{remark}
From the intuitions of \cite{Gueye}, one may conjecture that Proposition \ref{B.asymp} could be further strengthened, with the boundedness assumption on $E_2 [ u ]$ replaced by a sharp boundedness condition on an appropriate fractional $H^{ 1 + \kappa }$-norm.
However, we will not pursue this question in the present paper.
\end{remark}

\section{Multiplier Inequalities} \label{S.multipliers}

In this section, we derive some multiplier identities and inequalities, which form the foundations of the proof of the main Carleman estimates, Theorem \ref{T.Carleman}.
As mentioned before, these can be viewed as extensions to singular wave operators of the classical Morawetz inequality for wave equations.

In what follows, we fix $0 < \varepsilon \ll 1$, and we define the cylindrical region
\begin{equation}
\label{C_eps} \mc{C}_\varepsilon := ( -T, T ) \times \{ \varepsilon < r < 1 - \varepsilon \} \text{.}
\end{equation}
Moreover, let $\Gamma_\varepsilon$ denote the timelike boundary of $\mc{C}_\varepsilon$:
\begin{equation}
\label{Gamma_eps} \Gamma_\varepsilon := \Ga_\ep^- \cup \Ga_\ep^+ := [ ( -T, T ) \times \{ r = \varepsilon \} ] \cup [ ( -T, T ) \times\{ r = 1 - \varepsilon \} ] \text{.}
\end{equation}
We also let $\nu$ denote the unit outward-pointing ($g$-)normal vector field on $\Gamma_\varepsilon$.

Finally, we fix a constant $c > 0$, and we define the functions
\begin{equation}
\label{f,z} f := - \frac{1}{ 1 + 2 \ka } \cdot y^{ 1 + 2\ka } - c t^2 \text{,} \qquad z := - 4 c \text{,}
\end{equation}
which will be used to construct the multiplier for our upcoming inequalities.

\subsection{A Preliminary Identity}

We begin by deriving a preliminary form of our multiplier identity, for which the multiplier is defined using $f$ and $z$:

\begin{proposition} \label{T.mult_general}
Let $u \in C^\infty ( \mc{C} )$, and assume $u$ is supported on $\mc{C} \cap \{ |t| < T - \de \}$ for some $0 < \delta \ll 1$.
Then, we have the identity,
\begin{align}
\label{gralmult} - \int_{ \mc{C}_\varepsilon } \Box_y u \cdot S_{ f, z } u &= \int_{ \mc{C}_\varepsilon } ( \nabla^{ \alpha \beta } f + z \cdot g^{ \alpha \beta } ) D_\alpha u D_\beta u + \int_{ \mc{C}_\varepsilon } \mc{A}_{ f, z } \cdot u^2 \\
\notag &\qquad - \int_{ \Gamma_\varepsilon } S_{ f, z } u \cdot D_\nu u + \frac{1}{2} \int_{ \Gamma_\varepsilon } \nabla_\nu f \cdot D_\beta u
 D^\beta u \\
\notag &\qquad + \frac{1}{2} \int_{ \Gamma_\varepsilon } \nabla_\nu w_{ f, z } \cdot u^2 \text{,}
\end{align}
for any $0 < \varepsilon \ll 1$, where
\begin{align}
\label{gral_wAS} w_{ f, z } &:= \frac{1}{2} \left( \Box f + \frac{ 2 \kappa }{y} \nabla_\alpha y \nabla^\alpha f \right) + z \text{,} \\
\notag \mc{A}_{ f, z } &:= - \frac{1}{2} \left( \Box w_{ f, z } + \frac{ 2 \kappa }{y} \nabla_\alpha y \nabla^\alpha w_{ f, z } \right) \text{,} \\
\notag S_{ f, z } &:= \nabla^\alpha f \cdot D_\alpha + w_{ f, z } \text{.}
\end{align}
\end{proposition}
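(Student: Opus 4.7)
The identity \eqref{gralmult} is a structural multiplier calculation, and the cleanest route is to conjugate away the singular twist before applying standard integration by parts. Concretely, I would introduce $v := y^{-\kappa} u$, so that $D_\alpha u = y^\kappa \nabla_\alpha v$ and, from the definition \eqref{Box_y} and a direct computation using \eqref{y_id},
\[
\Box_y u \;=\; y^{-\kappa}\,\nabla^\alpha\bigl(y^{2\kappa}\,\nabla_\alpha v\bigr).
\]
Then $y^\kappa \cdot \Box_y u \cdot S_{f,z}u = \nabla^\alpha(y^{2\kappa}\nabla_\alpha v)\cdot\bigl(y^\kappa\nabla^\beta f\,\nabla_\beta v + w_{f,z}\,y^\kappa v\bigr)$, and after multiplying through by the remaining $y^\kappa$ I can apply the ordinary divergence theorem on $\mc{C}_\varepsilon$. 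The boundary terms at $t=\pm T$ are killed by the support assumption on $u$, leaving only contributions on $\Gamma_\varepsilon$.

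The first divergence-theorem step moves $\nabla^\alpha$ off $y^{2\kappa}\nabla_\alpha v$ and onto the multiplier $y^\kappa\nabla^\beta f\,\nabla_\beta v + w_{f,z}\,u$. This produces three types of bulk terms: a Hessian term $y^{2\kappa}\nabla^{\alpha\beta}f\,\nabla_\alpha v\,\nabla_\beta v$; a symmetric quadratic term $y^{2\kappa}\nabla^\beta f\,\nabla^\alpha v\,\nabla_{\alpha\beta}v$ (which I rewrite as $\tfrac12 y^{2\kappa}\nabla^\beta f\,\nabla_\beta(|\nabla v|_g^2)$ and integrate by parts a second time); a cross term $y^{2\kappa}w_{f,z}\,|\nabla v|_g^2$; and a term $y^{2\kappa}v\,\nabla^\alpha v\,\nabla_\alpha w_{f,z}$ (which I rewrite as $\tfrac12 y^{2\kappa}\nabla^\alpha w_{f,z}\,\nabla_\alpha(v^2)$ and also integrate by parts again). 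After converting back using $y^{2\kappa}\nabla_\alpha v\,\nabla_\beta v = D_\alpha u\,D_\beta u$ and $y^{2\kappa}v^2=u^2$, the coefficient of $|Du|_g^2$ in the bulk becomes
\[
w_{f,z}\;-\;\tfrac12\bigl(\Box f + 2\kappa y^{-1}\nabla_\alpha y\,\nabla^\alpha f\bigr),
\]
which by the definition of $w_{f,z}$ in \eqref{gral_wAS} collapses to exactly $z$; similarly, the coefficient of $u^2$ collapses to $\mc{A}_{f,z}$. This yields the first line of \eqref{gralmult}.

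The boundary contributions on $\Gamma_\varepsilon$ come from the two divergence theorems: the first produces $-\int_{\Gamma_\varepsilon} D_\nu u\cdot\bigl(\nabla^\alpha f\,D_\alpha u + w_{f,z}u\bigr) = -\int_{\Gamma_\varepsilon} D_\nu u\cdot S_{f,z}u$; the second, from the $\tfrac12\nabla^\beta f\,\nabla_\beta(|\nabla v|_g^2)$ step, gives $\tfrac12\int_{\Gamma_\varepsilon}\nabla_\nu f\cdot D^\beta u\,D_\beta u$; the third, from the $\tfrac12\nabla^\alpha w_{f,z}\,\nabla_\alpha(v^2)$ step, gives $\tfrac12\int_{\Gamma_\varepsilon}\nabla_\nu w_{f,z}\cdot u^2$. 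These are precisely the three boundary terms in \eqref{gralmult}.

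I do not foresee any deep obstacle here: the argument is a bookkeeping exercise, and the main subtlety is simply that the particular combination appearing in $w_{f,z}$ — namely $\tfrac12(\Box f + 2\kappa y^{-1}\nabla_\alpha y\,\nabla^\alpha f)$ — is exactly the one dictated by the weighted divergence $\nabla_\alpha(y^{2\kappa}\nabla^\alpha f) = y^{2\kappa}(\Box f + 2\kappa y^{-1}\nabla_\alpha y\,\nabla^\alpha f)$ arising from the conjugation. This is what forces the unwanted $|Du|_g^2$ coefficient to simplify down to the constant $z$, and makes the analogous simplification occur for the potential $\mc{A}_{f,z}$. Once this structural observation is in place, the rest is mechanical.
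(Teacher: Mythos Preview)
Your argument is correct and arrives at \eqref{gralmult} by the same sequence of integrations by parts as the paper, but with a different organizational choice: you conjugate first via $v = y^{-\kappa}u$, reducing $\Box_y$ to the weighted divergence operator $y^{-\kappa}\nabla^\alpha(y^{2\kappa}\nabla_\alpha\,\cdot\,)$, and then carry out standard integrations by parts against the weight $y^{2\kappa}$. The paper instead works throughout with the twisted operators $D$ and $\bar D$, using the adjointness of $-\bar D$ and $D$ and the product rule \eqref{prod_rule} to perform the same two integrations by parts directly on $u$. Your route has the advantage that the origin of the combination $\Box f + 2\kappa y^{-1}\nabla_\alpha y\,\nabla^\alpha f$ in $w_{f,z}$ is made completely transparent (it is exactly $y^{-2\kappa}\nabla_\alpha(y^{2\kappa}\nabla^\alpha f)$), whereas the paper's route keeps the computation native to the twisted calculus used elsewhere. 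One small slip: in your displayed line ``$y^\kappa\cdot\Box_y u\cdot S_{f,z}u = \ldots$'' the powers of $y^\kappa$ do not balance as written --- in fact $\Box_y u\cdot S_{f,z}u = \nabla^\alpha(y^{2\kappa}\nabla_\alpha v)\cdot(\nabla^\beta f\,\nabla_\beta v + w_{f,z}v)$ with no stray factor --- but the subsequent steps make clear you have the right integrand, so this is purely cosmetic.
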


\begin{proof}
Integrating the left-hand side of \eqref{gralmult} by parts twice reveals that
\begin{align*}
- \int_{ \mc{C}_\varepsilon } \Box_y u \cdot \nabla^\alpha f D_\alpha u &= \int_{ \mc{C}_\varepsilon } D_\beta u \cdot D^\beta ( \nabla^\alpha f D_\alpha u ) - \int_{ \Gamma_\varepsilon } \nabla^\alpha f D_\alpha u \cdot D_\nu u \\
&= \int_{ \mc{C}_\varepsilon } \nabla^{ \alpha \beta } f \cdot
  D_\alpha u D_\beta u + \int_{ \mc{C}_\varepsilon } \nabla^\alpha f \cdot D_\beta u D_\alpha{}^\beta u \\
&\qquad  - \int_{ \Gamma_\varepsilon } \nabla^\alpha f D_\alpha u \cdot D_\nu u \\
&= \int_{ \mc{C}_\varepsilon } \nabla^{ \alpha \beta } f \cdot D_\alpha u D_\beta u + \frac{1}{2} \int_{ \mc{C}_\varepsilon } \nabla^\alpha f \cdot \nabla_\alpha ( D_\beta u D^\beta u ) \\
&\qquad - \int_{ \mc{C}_\varepsilon } \frac{ \kappa }{y} \nabla_\alpha y \nabla^\alpha f \cdot D_\beta u D^\beta u - \int_{ \Gamma_\varepsilon } \nabla^\alpha f D_\alpha u \cdot D_\nu u \\
&= \int_{ \mc{C}_\varepsilon } \left[ \nabla^{ \alpha \beta } f  - \frac{1}{2}\left( \Box f + \frac{ 2 \kappa }{y} \nabla_\alpha y \nabla^\alpha f \right) g^{\al\be} \right] \cdot D_\alpha u D_\beta u \\
&\qquad - \int_{ \Gamma_\varepsilon } \nabla^\alpha f D_\alpha u \cdot D_\nu u + \frac{1}{2} \int_{ \Gamma_\varepsilon } \nabla_\nu f \cdot D_\beta u D^\beta u \text{,}
\end{align*}
where in the above steps, we also applied the identities \eqref{twisted}, \eqref{prod_rule}, \eqref{Box_y}, as well as the observation that $\bar{D}$ is the adjoint of $D$.

A similar set of computations also yields
\begin{align*}
- \int_{ \mc{C}_\varepsilon } \Box_y u \cdot w_{ f, z } u &= \int_{ \mc{C}_\varepsilon } D^\alpha u D_\alpha ( w_{ f, z } u ) - \int_{ \Gamma_\varepsilon } w_{ f, z } \cdot u D_\nu u \\
&= \int_{ \mc{C}_\varepsilon } \nabla_\alpha w_{ f, z } \cdot u D^\alpha u + \int_{ \mc{C}_\varepsilon } w_{ f, z } \cdot D^\alpha u D_\alpha u - \int_{ \Gamma_\varepsilon } w_{ f, z } \cdot u D_\nu u \\
&= \int_{ \mc{C}_\varepsilon } w_{ f, z } \cdot D^\alpha u D_\alpha u + \frac{1}{2} \int_{ \mc{C}_\varepsilon } \nabla_\alpha w_{ f, z } \cdot \nabla^\alpha ( u^2 ) \\
&\qquad - \int_{ \mc{C}_\varepsilon } \frac{ \kappa }{y} \nabla^\alpha y \nabla_\alpha w_{ f, z } \cdot u^2 - \int_{ \Gamma_\varepsilon } w_{ f, z } \cdot u D_\nu u \\
&= \int_{ \mc{C}_\varepsilon } w_{ f, z } \cdot D^\alpha u D_\alpha u - \frac{1}{2} \int_{ \mc{C}_\varepsilon } \left( \Box w_{ f, z } + \frac{ 2 \kappa }{y} \nabla^\alpha y \nabla_\alpha w_{ f, z } \right) \cdot u^2 \\
&\qquad - \int_{ \Gamma_\varepsilon } w_{ f, z } \cdot u D_\nu u + \frac{1}{2} \int_{ \Gamma_\varepsilon } \nabla_\nu w_{ f, z } \cdot u^2 \text{.}
\end{align*}
Adding the above two identities results in \eqref{gralmult}.
\end{proof}

\subsection{Computations for $f$ and $z$}

In the following proposition, we collect some computations involving the functions $f$ and $z$ that will be useful later on.

\begin{proposition} \label{T.f,z}
$f$, $w_{ f, z }$, and $\mc{A}_{ f, z }$ (defined as in \eqref{f,z} and \eqref{gral_wAS}) satisfy
\begin{align}
\label{wAS} \nabla_{ \alpha \beta } f &= y^{ 2 \kappa } \cdot \nabla_{ \alpha \beta } r - 2 \kappa y^{ 2 \kappa - 1 } \cdot \nabla_\alpha r \nabla_\beta r - 2 c \cdot \nabla_\alpha t \nabla_\beta t \text{,} \\
\notag w_{ f, z } &= - 2 \kappa \cdot y^{ 2 \kappa - 1 } + \frac{1}{2} ( n - 1 ) \cdot y^{ 2 \kappa } r^{-1} - 3 c \,,\\
\notag \mc{A}_{ f, z } &= 2 \kappa ( 2 \kappa - 1 )^2 \cdot y^{ 2 \kappa - 3 } - \frac{1}{2} ( n - 1 ) \kappa ( 8 \kappa - 3 ) \cdot y^{ 2 \kappa - 2 } r^{-1} \\
\notag &\qquad + \frac{1}{2} ( n - 1 ) ( n - 4 ) \kappa \cdot y^{ 2 \kappa -
  1 } r^{-2} + \frac{1}{4} ( n - 1 ) ( n - 3 ) \cdot y^{ 2 \kappa } r^{-3} \text{.}
\end{align}
\end{proposition}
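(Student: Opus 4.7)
The proposition is a pure calculation, unpacking the definitions \eqref{gral_wAS} of $w_{f,z}$, $\mathcal{A}_{f,z}$ for the specific functions $f$, $z$ in \eqref{f,z}. I would organize the work in three stages corresponding to the three identities. Throughout, the basic tools are the identities \eqref{y_id} together with the obvious companion facts $\nabla y = -\nabla r$, $|\nabla r|^2 = 1$, $\Box r = (n-1)/r$, $g(\nabla t, \nabla t) = -1$, $g(\nabla r, \nabla t) = 0$, and $\nabla_{\alpha\beta} t = 0$ (since $t$ is a Cartesian coordinate on Minkowski space).

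For the Hessian of $f$, I would simply differentiate $f = -(1+2\kappa)^{-1} y^{1+2\kappa} - ct^2$ twice. The first derivative reads $\nabla_\alpha f = y^{2\kappa} \nabla_\alpha r - 2ct \nabla_\alpha t$; differentiating again and using $\nabla_{\alpha\beta} t = 0$ along with $\nabla_\alpha y = -\nabla_\alpha r$ to handle the chain rule produces the stated formula for $\nabla_{\alpha\beta} f$.

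For $w_{f,z}$, I would take the $g$-trace of the formula for $\nabla_{\alpha\beta} f$, using $|\nabla r|^2 = 1$ and $g(\nabla t, \nabla t) = -1$, to obtain $\Box f = (n-1) y^{2\kappa} r^{-1} - 2\kappa y^{2\kappa-1} + 2c$. Separately, because $\nabla r$ and $\nabla t$ are $g$-orthogonal, the contraction collapses to $\nabla^\alpha y \nabla_\alpha f = -y^{2\kappa}$, so $\tfrac{2\kappa}{y}\nabla^\alpha y\nabla_\alpha f = -2\kappa y^{2\kappa-1}$. Plugging these into \eqref{gral_wAS} together with $z = -4c$ yields the claimed expression for $w_{f,z}$.

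For $\mathcal{A}_{f,z}$, only derivatives of $w_{f,z}$ enter, so the constant $-3c$ drops out and it suffices to apply $\Box$ and $\nabla^\alpha y \nabla_\alpha$ to the two terms $-2\kappa y^{2\kappa-1}$ and $\tfrac{n-1}{2} y^{2\kappa} r^{-1}$. For a pure power, $\Box(y^p) = p(p-1) y^{p-2} - p(n-1) y^{p-1} r^{-1}$ and $\nabla^\alpha y\nabla_\alpha(y^p) = p y^{p-1}$; for the mixed term I would use the Leibniz rule $\Box(y^{2\kappa} r^{-1}) = \Box(y^{2\kappa}) r^{-1} + y^{2\kappa} \Box(r^{-1}) + 2\nabla^\alpha(y^{2\kappa})\nabla_\alpha(r^{-1})$, computing $\Box(r^{-1}) = -(n-3) r^{-3}$ (either directly or via $\Box^2 y$ from \eqref{y_id}) and $\nabla^\alpha(y^{2\kappa})\nabla_\alpha(r^{-1}) = 2\kappa y^{2\kappa-1} r^{-2}$. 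Summing the four resulting monomial types $y^{2\kappa-3}$, $y^{2\kappa-2} r^{-1}$, $y^{2\kappa-1} r^{-2}$, $y^{2\kappa} r^{-3}$ and collecting coefficients gives the formula stated. The computation is lengthy but entirely mechanical; the only genuine obstacle is bookkeeping—keeping track of signs coming from $\nabla y = -\nabla r$, distinguishing $y$-derivatives from $r$-derivatives, and not forgetting the Leibniz cross-term in the mixed product.
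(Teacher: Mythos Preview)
Your proposal is correct and is essentially the same direct computation as the paper's proof; the only organizational difference is that the paper first computes $w_{f_q,0}$ and $\mc{A}_{f_q,0}$ for the one-parameter family $f_q := -(1+q)^{-1} y^{1+q}$ using the abstract identities \eqref{y_id}, then specializes to $q = 2\kappa$ and adds the $-ct^2$ and $z$ contributions, whereas you compute directly for the specific $f$. Both routes amount to the same bookkeeping, and your identification of the sign conventions around $\nabla y = -\nabla r$ and the Leibniz cross-term as the main pitfalls is exactly right.
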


\begin{proof}
First, we fix $q \in \R \setminus \{ -1 \}$, and we let
\begin{equation}
\label{fq} f_q := - \frac{ y^{ 1 + q } }{ 1 + q } \text{.}
\end{equation}
Note that $f_q$ satisfies
\begin{align}
\label{fq_deriv} \nabla_\alpha f_q &= - y^q \cdot \nabla_\alpha y \text{,} \\
\notag \nabla_{ \alpha \beta } f_q &= - y^q \cdot \nabla_{ \alpha \beta } y - q y^{ q - 1 } \cdot \nabla_\alpha y \nabla_\beta y \text{,} \\
\notag \Box f_q &= - y^q \cdot \Box y - q y^{ q - 1 } \cdot \nabla^\alpha y \nabla_\alpha y \text{,} \\
\notag \frac{ 2 \kappa }{y} \cdot \nabla^\alpha y \nabla_\alpha f_q &= - 2 \kappa y^{ q - 1 } \cdot \nabla^\alpha y \nabla_\alpha y \text{.}
\end{align}

Next, using the notations from \eqref{gral_wAS}, along with \eqref{y_id} and \eqref{fq_deriv}, we have
\begin{align}
\label{wq} w_{ f_q, 0 } &= - \frac{1}{2} y^q \cdot \Box y - \left( \kappa + \frac{ q }{2} \right) y^{ q - 1 } \cdot \nabla^\alpha y \nabla_\alpha y \\
\notag &= - \left( \kappa + \frac{ q }{2} \right) \cdot y^{ q - 1 } + \frac{ n - 1 }{2} \cdot y^q r^{-1} \text{.}
\end{align}
Moreover, further differentiating \eqref{wq} and again using \eqref{y_id}, we see that
\begin{align*}
\Box w_{ f_q, 0 } &= - \frac{1}{2} ( q + 2 \kappa ) ( q - 1 ) ( q - 2 ) y^{ q - 3 } \cdot ( \nabla^\alpha y\nabla_\alpha y )^2 \\
&\qquad - ( q - 1 ) [( q + \kappa )  \Box y \nabla^\alpha y \nabla_\alpha y + 2 ( q + 2 \kappa ) \nabla^{ \alpha \beta } y \nabla_\alpha y \nabla_\beta y]\cdot y^{ q - 2 } \\
&\qquad - 2 ( q + \kappa ) y^{ q - 1 } \cdot \nabla^\alpha y \nabla_\alpha ( \Box y ) - ( q + 2 \kappa ) y^{ q - 1 } \cdot \nabla^{ \alpha \beta } y \nabla_{ \alpha \beta } y \\
&\qquad - \frac{1}{2} q y^{ q - 1 } \cdot ( \Box y)^2 - \frac{1}{2} y^q \cdot \Box^2 y \text{,} \\
\frac{ 2 \kappa }{y} \nabla^\alpha y \nabla_\alpha w_{ f_q, 0 } &= - \kappa ( q + 2 \kappa ) ( q - 1 ) y^{ q - 3 } \cdot ( \nabla^\alpha y \nabla_\alpha y )^2 - \kappa q y^{ q - 2 } \cdot \Box y \nabla^\alpha y \nabla_\alpha y \\
&\qquad - 2 \kappa ( q + 2 \kappa ) y^{ q - 2 } \cdot \nabla^{ \alpha \beta } y \nabla_\alpha y \nabla_\beta y - \kappa y^{ q - 1 } \cdot \nabla^\alpha y \nabla_\alpha ( \Box y ) \text{.}
\end{align*}
We can then use the above to compute the coefficient $\mc{A}_{ f_q, 0 }$:
\begin{align}
\label{Aq} \mc{A}_{ f_q, 0 } &= \frac{1}{4} ( q + 2 \kappa ) ( q + 2 \kappa - 2 ) ( q - 1 ) y^{ q - 3 } \cdot ( \nabla^\alpha y \nabla_\alpha y )^2 \\
\notag &\qquad + \frac{1}{2} ( q^2 - q + 2 \kappa q - \kappa ) y^{ q - 2 } \cdot \Box y \nabla^\alpha y \nabla_\alpha y \\
\notag &\qquad + ( q + 2 \kappa ) ( q + \kappa - 1 ) y^{ q - 2 } \cdot \nabla^{ \alpha \beta } y \nabla_\alpha y \nabla_\beta y \\
\notag &\qquad   + \frac{1}{2} ( 2 q + 3 \kappa ) y^{ q - 1 } \cdot \nabla^\alpha y \nabla_\alpha ( \Box y ) + \frac{1}{2} ( q + 2 \kappa ) y^{ q - 1 } \cdot \nabla^{ \alpha \beta } y \nabla_{ \alpha \beta } y \\
\notag &\qquad + \frac{1}{4} q y^{ q - 1 } \cdot ( \Box y )^2 + \frac{1}{4} y^q \cdot \Box^2 y \\
\notag &= \frac{1}{4} ( q + 2 \kappa ) ( q + 2 \kappa - 2 ) ( q - 1 ) \cdot y^{ q - 3 } \\
\notag &\qquad - \frac{1}{2} ( n - 1 ) ( q^2 - q + 2 \kappa q - \kappa ) \cdot y^{ q - 2 } r^{-1} \\
\notag &\qquad + \frac{1}{4} ( n - 1 ) [ q ( n - 3 ) - 2 \kappa ] \cdot y^{ q - 1 } r^{-2} + \frac{1}{4} ( n - 1 ) ( n - 3 ) \cdot y^q r^{-3} \text{.}
\end{align}

Notice from \eqref{f,z} and \eqref{fq} that we can write
\[
f = f_{ 2 \kappa } - c t^2 \text{,}
\]
Thus, substituting $q = 2 \kappa$ in \eqref{fq}, we see that the Hessian of $f$ satisfies
\begin{align*}
\nabla_{ \alpha \beta } f &= \nabla_{ \alpha \beta } f_{ 2 \kappa } - c \nabla_{ \alpha \beta } t^2 \\
&= y^{ 2 \kappa } \cdot \nabla_{ \alpha \beta } r - 2 \kappa y^{ 2 \kappa - 1 } \cdot \nabla_\alpha r \nabla_\beta r - 2 c \nabla_\alpha t \nabla_\beta t \text{,}
\end{align*}
which is precisely the first part of \eqref{wAS}.

Moreover, noting that
\[
w_{ - c t^2, 0 } = c \text{,}
\]
then we also have
\begin{align*}
w_{ f, z } &= w_{ f_{ 2 \kappa }, 0 } + w_{ - c t^2, 0 } + z \\
&= - 2 \kappa \cdot y^{ 2 \kappa - 1 } + \frac{1}{2} ( n - 1 ) \cdot y^{ 2 \kappa } r^{-1} - 3 c \,,
\end{align*}
which gives the second equation in \eqref{wAS}.
Finally, noting that
\[
\mc{A}_{ - c t^2, 0 } = 0 \text{,} \qquad - \frac{1}{2} \left( \Box z + \frac{ 2 \kappa }{y} \cdot \nabla^\alpha y \nabla_\alpha z \right) = 0 \text{,}
\]
we obtain, with the help of \eqref{y_id}, the last equation of \eqref{wAS}:
\begin{align*}
\mc{A}_{ f, z } &= \mc{A}_{ f_{ 2 \kappa }, 0 } + \mc{A}_{ - c t^2, 0 } - \frac{1}{2} \left( \Box z + \frac{ 2 \kappa }{y} \cdot \nabla^\alpha y \nabla_\alpha z \right) \\
&= 2 \kappa ( 2 \kappa - 1 )^2 y^{ 2 \kappa - 3 } \cdot ( \nabla^\alpha y \nabla_\alpha y )^2 + \frac{1}{2} \kappa ( 8 \kappa - 3 ) y^{ 2 \kappa - 2 } \cdot \Box y \nabla^\alpha y \nabla_\alpha y \\
&\qquad + 4 \kappa ( 3 \kappa - 1 ) y^{ 2 \kappa - 2 } \cdot \nabla^{ \alpha \beta } y \nabla_\alpha y \nabla_\beta y + \frac{7}{2} \kappa y^{ 2 \kappa - 1 } \cdot \nabla^\alpha y \nabla_\alpha ( \Box y ) \\
&\qquad + 2 \kappa y^{ 2 \kappa - 1 } \cdot \nabla^{ \alpha \beta } y \nabla_{ \alpha \beta } y + \frac{1}{2} \kappa y^{ 2 \kappa - 1 } \cdot ( \Box y )^2 + \frac{1}{4} y^{ 2 \kappa } \cdot \Box^2 y \\
&= 2 \kappa ( 2 \kappa - 1 )^2 \cdot y^{ 2 \kappa - 3 } - \frac{1}{2} ( n - 1 ) \kappa ( 8 \kappa - 3 ) \cdot y^{ 2 \kappa - 2 } r^{-1} \\
&\qquad + \frac{1}{2} ( n - 1 ) ( n - 4 ) \kappa \cdot y^{ 2 \kappa - 1 } r^{-2} + \frac{1}{4} ( n - 1 ) ( n - 3 ) \cdot y^{ 2 \kappa } r^{-3} \text{.} \qedhere
\end{align*}
\end{proof}

\subsection{The Main Inequality}

We conclude this section with the multiplier inequality that will be used to prove our main Carleman estimate:

\begin{proposition} \label{T.multineq}
Let $f$ and $z$ be as in \eqref{f,z}, and let $u \in C^\infty ( \mc{C} )$ be supported on $\mc{C} \cap \{ |t| < T - \de \}$ for some $0 < \delta \ll 1$.
Then, we have the inequality
\begin{align}
\label{multineq} - \int_{ \mc{C}_\varepsilon } \Box_y u \cdot S_{ f, z } u &\geq \int_{ \mc{C}_\varepsilon } [ ( 1 - 4 c ) \cdot | \snabla u |^2 + 2 c \cdot ( \pd_t u )^2 - 4 c \cdot ( D_r u )^2 ] \\
\notag &\qquad - \frac{1}{2} ( n - 1 ) \kappa \int_{ \mc{C}_\varepsilon } y^{ 2 \kappa - 2 } r^{-2} [ r - ( n - 4 ) y ] \cdot u^2 \\
\notag &\qquad + \frac{1}{4} ( n - 1 ) ( n - 3 ) \int_{ \mc{C}_\varepsilon } y^{ 2 \kappa } r^{-3} \cdot u^2 - \int_{ \Gamma_\varepsilon } S_{ f,z } u \cdot D_\nu u \\
\notag &\qquad + \frac{1}{2} \int_{ \Gamma_\varepsilon } \nabla_\nu f \cdot D_\beta u D^\beta u + \frac{1}{2} \int_{ \Gamma_\varepsilon } \nabla_\nu w_{ f, z } \cdot u^2 \\
\notag &\qquad + 2 \kappa ( 2 \kappa - 1 ) \int_{ \Gamma_\varepsilon } y^{ 2 \kappa - 2 } \nabla_\nu y \cdot u^2 \text{,}
\end{align}
for any $0 < \varepsilon \ll 1$, where $w_{ f, z }$ and $S_{ f, z }$ are defined as in \eqref{gral_wAS}.
\end{proposition}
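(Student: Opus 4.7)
The plan is to start from the preliminary multiplier identity \eqref{gralmult} of Proposition \ref{T.mult_general}, specialized to the explicit $f$ and $z$ of \eqref{f,z}, and then to massage the interior integrand until it matches the right-hand side of \eqref{multineq}. The closed forms for $\nab_{\al\be} f$, $w_{f,z}$, and $\mc{A}_{f,z}$ are already supplied by Proposition \ref{T.f,z}, so the proof reduces to two concrete algebraic tasks: (i) expressing the pointwise quadratic form $(\nab^{\al\be} f + z g^{\al\be}) D_\al u D_\be u$ in the twisted polar basis $(\pd_t u, D_r u, \snabla u)$, and (ii) taming the most singular term appearing in $\mc{A}_{f,z}$, namely $2\ka(2\ka-1)^2 y^{2\ka-3} u^2$, which is negative because $\ka < 0$.

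For task (i), I would insert the Minkowski Hessian identity $\nab_{\al\be} r = r^{-1}(g_{\al\be} - \nab_\al r \nab_\be r + \nab_\al t \nab_\be t)$ into the expression for $\nab_{\al\be} f$ in \eqref{wAS} and contract with $D^\al u D^\be u$. Since $y$ is purely radial, one has $D_t u = \pd_t u$, $D_A u = \snabla_A u$, $\nab^\al r D_\al u = D_r u$, and $\nab^\al t D_\al u = -\pd_t u$, so a direct collection of coefficients yields
\begin{equation*}
(\nab^{\al\be} f + z g^{\al\be}) D_\al u D_\be u = (y^{2\ka} r^{-1} - 4c)|\snabla u|^2 + 2c(\pd_t u)^2 + (-4c - 2\ka y^{2\ka-1})(D_r u)^2.
\end{equation*}
On $\mc{C}_\varepsilon$ one has $y, r \in (0,1)$, so $y^{2\ka} \geq 1$ and $r^{-1} \geq 1$; hence the $|\snabla u|^2$ coefficient is at least $1 - 4c$, which matches \eqref{multineq}. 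The $(\pd_t u)^2$ coefficient is already exactly $2c$, while the $(D_r u)^2$ coefficient exceeds the target $-4c$ by the positive excess $-2\ka y^{2\ka-1}(D_r u)^2$. This excess is precisely what will be spent on the bad $u^2$ term in the next step.

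For task (ii), I would invoke the pointwise Hardy inequality \eqref{hardy} with the special choice $q = 2\ka$, for which the coefficients collapse nicely: $\frac{1}{4}(2\ka + q - 2)^2 = (2\ka-1)^2$ and $\ka + \frac{q-2}{2} = 2\ka - 1$. Multiplying \eqref{hardy} through by $-2\ka > 0$ produces the estimate
\begin{align*}
2\ka(2\ka - 1)^2 y^{2\ka - 3} u^2 &\geq 2\ka\, y^{2\ka - 1}(D_r u)^2 + 2\ka(n-1)(2\ka - 1) y^{2\ka - 2} r^{-1} u^2 \\
&\qquad + \nab^\be\bigl[ 2\ka(2\ka - 1) y^{2\ka - 2} \nab_\be y \cdot u^2 \bigr],
\end{align*}
which I would integrate over $\mc{C}_\varepsilon$ and add to the output of task (i) together with the remaining three terms of $\mc{A}_{f,z}$. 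Three cancellations then occur: the $2\ka y^{2\ka-1}(D_r u)^2$ piece annihilates the positive surplus in the $(D_r u)^2$ coefficient and leaves exactly $-4c$; the $y^{2\ka-2} r^{-1} u^2$ contributions sum to $-\tfrac{1}{2}(n-1)\ka y^{2\ka-2} r^{-1} u^2$ thanks to the identity $2\ka(n-1)(2\ka-1) - \tfrac{1}{2}(n-1)\ka(8\ka-3) = -\tfrac{1}{2}(n-1)\ka$, which combined with the $y^{2\ka-1} r^{-2} u^2$ term of $\mc{A}_{f,z}$ factors as $-\tfrac{1}{2}(n-1)\ka y^{2\ka-2} r^{-2}[r - (n-4)y] u^2$; and the $\tfrac{1}{4}(n-1)(n-3) y^{2\ka} r^{-3} u^2$ term passes through unchanged. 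The divergence produced by Hardy becomes, via the divergence theorem, the boundary integral $2\ka(2\ka-1) \int_{\Gamma_\varepsilon} y^{2\ka-2} \nab_\nu y \cdot u^2$ (the $t = \pm T$ boundaries contribute nothing thanks to the support hypothesis on $u$), while the remaining boundary pieces in \eqref{gralmult} appear verbatim in \eqref{multineq}. The main obstacle is not any individual calculation but rather recognizing that the precise exponent $q = 2\ka$ is the unique choice that lines up the critically singular $y^{2\ka-3}$ term in $\mc{A}_{f,z}$ with the $y^{2\ka-1}(D_r u)^2$ surplus produced by the Hessian of the novel Carleman weight \eqref{weight}; once that pairing is identified, the arithmetic falls into place.
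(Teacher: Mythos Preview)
Your proposal is correct and follows essentially the same route as the paper: start from the identity \eqref{gralmult}, insert the closed forms \eqref{wAS}, expand the Hessian quadratic form in the twisted polar frame and use $y^{2\kappa}r^{-1}\geq 1$ for the angular coefficient, then apply the pointwise Hardy inequality \eqref{hardy} with $q=2\kappa$ to trade the surplus $-2\kappa y^{2\kappa-1}(D_r u)^2$ against the bad $2\kappa(2\kappa-1)^2 y^{2\kappa-3}u^2$ term in $\mc{A}_{f,z}$. The only cosmetic difference is that the paper leaves the Hardy inequality in the form \eqref{multineq_3} (a lower bound on $-2\kappa y^{2\kappa-1}(D_r u)^2$) rather than rearranging it into a lower bound on the singular $u^2$ term as you do, but the two presentations are algebraically equivalent and the subsequent bookkeeping is identical.
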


\begin{proof}
Applying the multiplier identity \eqref{gralmult}, with $f$ and $z$ from \eqref{f,z}, and recalling the formulas \eqref{wAS} for $\nabla^2 f$, $w_{ f, z }, $ and $\mc{A}_{ f, z }$, we obtain that
\[
I := - \int_{ \mc{C}_\varepsilon } \Box_y u \cdot S_{ f, z } u
\]
satisfies the identity
\begin{align}
\label{multineq_1} I &= \int_{\mc{C}_\varepsilon} ( y^{ 2 \kappa } \nabla^{ \alpha \beta } r - 2 \kappa y^{ -1 + 2 \kappa } \nabla^\alpha r \nabla^\beta r - 2 c \nabla^\alpha t \nabla^\beta t - 4 c g^{\alpha \beta } ) D_\alpha u D_\beta u \\
\notag &\qquad + 2 \kappa ( 2 \kappa - 1 )^2 \int_{ \mc{C}_\varepsilon } y^{ 2 \kappa - 3 } u^2 - \frac{1}{2} ( n - 1 ) \kappa ( 8 \kappa - 3 ) \int_{ \mc{C}_\varepsilon } y^{ 2 \kappa - 2 } r^{-1} u^2 \\
\notag &\qquad + \frac{1}{2} ( n - 1 ) ( n - 4 ) \kappa \int_{ \mc{C}_\varepsilon } y^{ 2 \kappa - 1 } r^{-2} u^2 + \frac{1}{4} ( n - 1 ) ( n - 3 ) \int_{ \mc{C}_\varepsilon } y^{ 2 \kappa } r^{-3} u^2 \\
\notag &\qquad - \int_{ \Gamma_\varepsilon } S_{ f, z } u \cdot D_\nu u + \frac{1}{2} \int_{ \Gamma_\varepsilon } \nabla_\nu f \cdot D_\beta u D^\beta u + \frac{1}{2} \int_{ \Gamma_\varepsilon } \nabla_\nu w_{ f, z } \cdot u^2 \text{.}
\end{align}

For the first-order terms in the multiplier identity, we notice that
\[
\nabla^{ \alpha \beta } r \cdot D_\alpha u D_\beta u = r^{-1} |\snabla u |^2 \text{,} \qquad | \snabla u |^2 = g^{AB} \snabla_A u \snabla_B u \text{,}
\]
and we hence expand
\begin{align}
\label{multineq_2} &( y^{ 2 \kappa } \cdot \nabla^{ \alpha \beta } r - 2 \kappa y^{ -1 + 2 \kappa } \nabla^\alpha r \nabla^\beta r - 2 c \cdot \nabla^\alpha t \nabla^\beta t - 4 c \cdot g^{ \alpha \beta } ) D_\alpha u D_\beta u \\
\notag &\quad \geq - 2 \kappa y^{ - 1 + 2 \kappa } ( D_r u )^2 + ( y^{ 2 \kappa } r^{-1} - 4 c ) | \snabla u |^2 + 2 c ( \pd_t u )^2 - 4 c ( D_r u )^2 \\
\notag &\quad \geq - 2 \kappa y^{ - 1 + 2 \kappa } ( D_r u )^2 + ( 1 - 4 c ) | \snabla u |^2 + 2 c ( \pd_t u )^2 - 4 c ( D_r u )^2 \text{.}
\end{align}
Moreover, applying the Hardy inequality \eqref{hardy}, with $q = 2 \kappa$, yields
\begin{align}
\label{multineq_3} - 2 \kappa y^{ 2 \kappa - 1 } ( D_r u )^2 &\geq - 2 \kappa ( 2 \kappa - 1 )^2 y^{ 2 \kappa - 3 } u^2 + ( n - 1 ) 2 \kappa ( 2 \kappa - 1 ) y^{ 2 \kappa - 2 } r^{-1} u^2 \\
\notag &\qquad + 2 \kappa ( 2 \kappa - 1 )\nabla^\beta (y^{ 2 \kappa - 2 } \nabla_\beta y \cdot u^2 ) \text{.}
\end{align}

The desired inequality \eqref{multineq} now follows by combining \eqref{multineq_1}--\eqref{multineq_3} and applying the divergence theorem to the last term in \eqref{multineq_3}.
\end{proof}

\begin{remark} \label{rmk.pseudoconvex}
We note that the pseudo-convexity of the function $f$ (with respect to $\Box$) is implicit from the proof of Proposition \ref{T.multineq}.
While this was not shown directly, one can, with a few more computations, observe that the quantity $\nabla^2 f + z \cdot g$ is positive-definite when restricted to the directions tangent to the level sets of $f$.
Of course, this is a necessary condition for our upcoming Carleman estimates.
\end{remark}

\section{The Carleman Estimates} \label{S.Carleman}

In this section, we apply the preceding multiplier inequality to obtain our main Carleman estimates.
The precise statement of our estimates is the following:

\begin{theorem} \label{T.Carleman}
Assume $n \neq 2$, and fix $-\frac{1}{2} < \ka < 0$.
Also, let $u \in C^\infty ( \mc{C} )$ satisfy:
\begin{enumerate}[i)]
\item $u$ is boundary admissible (see Definition \ref{admissible}).

\item $u$ is supported on $\mc{C} \cap \{ |t| < T - \delta \}$ for some $\delta > 0$.
\end{enumerate}
Then, there exists some sufficiently large $\la_0 > 0$, depending only on $n$ and $\kappa$, such that the following Carleman inequality holds for all $\lambda \geq \lambda_0$:
\begin{align}
\label{Carleman} &\la \int_{\Ga} e^{ 2 \la f } ( \mc{N}_\ka u )^2 + \int_{ \mc{C} } e^{ 2 \la f } ( \Box_\ka u )^2 \\
\notag &\quad \geq C_0 \la \int_{\mc{C} } e^{ 2 \la f } [ ( \pd_t u )^2 + | \snabla u |^2 + ( D_r u )^2 ] + C_0 \lambda^3 \int_{ \mc{C} } e^{ 2 \la f } y^{ 6 \ka - 1 } u^2 \\
\notag &\quad\qquad + C_0 \lambda \cdot \begin{cases}
  \int_{ \mc{C} } e^{ 2 \la f } y^{ 2 \kappa - 2 } r^{-3} u^2 & \quad n \geq 4 \\
  \int_{ \mc{C} } e^{ 2 \la f } y^{ 2 \kappa - 2 } r^{-2} u^2 & \quad n = 3 \\
  0 & \quad n = 1
\end{cases} \text{.}
\end{align}
where the constant $C_0 > 0$ depends on $n$ and $\kappa$, where
\[
f = - \frac{1}{ 1 + 2 \ka } \cdot y^{ 1 + 2 \ka } - c t^2 \,,
\]
as in~\eqref{f,z},
and where the constant $c$ satisfies
\begin{equation}
\label{eqc} 0 < c < \frac{1}{5} \text{,} \qquad
\begin{cases}
  c \leq \frac{ 1 }{ 4 \sqrt{3} \cdot T} & \quad n \geq 4 \\
  c \leq \min \left\{ \frac{1}{4 \sqrt{15} \cdot T}, \frac{|\ka|}{120} \right\} & \quad n = 3 \\
  c \leq \frac{ 1}{ 4 \sqrt{15} \cdot T} & \quad n = 1
\end{cases} \text{.}
\end{equation}
\end{theorem}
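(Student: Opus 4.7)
The plan is to prove \eqref{Carleman} by the classical Carleman conjugation combined with the multiplier inequality of Proposition~\ref{T.multineq}. Set $w := e^{\la f} u$. Using the twisted product rule \eqref{prod_rule}, one obtains the conjugation identity
$$e^{\la f}\, \Box_y u \;=\; \mc{L}_\la w \;:=\; \Box_y w - 2 \la\, \nab^\al f\, D_\al w + \bigl( \la^2 |\nab f|^2 - 2 \la\, w_{f,0} \bigr)\, w,$$
with $w_{f,0}$ as in \eqref{gral_wAS} at $z = 0$. Hence $\int_\mc{C} e^{2\la f}(\Box_y u)^2 = \int_\mc{C} (\mc{L}_\la w)^2$, and since $\Box_\ka - \Box_y$ is a lower-order potential (see \eqref{Box_y_kappa}), it suffices to establish \eqref{Carleman} with $\Box_y$ in place of $\Box_\ka$. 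The key observation is that $\mc{L}_\la$ admits the symmetric-antisymmetric decomposition $\mc{L}_\la = L_+ + L_-$, with
$$L_+ := \Box_y + \la^2 |\nab f|^2, \qquad L_- := -2\la\, S_{f,0},$$
where $S_{f,0}$ is the multiplier of Proposition~\ref{T.mult_general} at $z = 0$; using that $-\bar D$ is the $L^2$-adjoint of $D$, one verifies that $S_{f,0}$ is antisymmetric, so $L_+$ is self-adjoint and $L_-$ is antisymmetric with respect to the standard $L^2$ pairing. Consequently,
$$\int_\mc{C}(\mc{L}_\la w)^2 \;=\; \|L_+ w\|^2 + \|L_- w\|^2 + 2 \int_\mc{C} L_+ w \cdot L_- w \;\geq\; 2 \int_\mc{C} L_+ w \cdot L_- w,$$
and the task reduces to extracting positive bulk from the cross term.

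The cross term splits as
$$2 \int_\mc{C} L_+ w \cdot L_- w \;=\; -4 \la \int_\mc{C} \Box_y w \cdot S_{f, 0} w \;-\; 4 \la^3 \int_\mc{C} |\nab f|^2 w \cdot S_{f, 0} w.$$
The first integral, after incorporating the missing $z = -4c$ piece through the standard energy identity for $\Box_y$ (equivalently, by using the full Proposition~\ref{T.multineq} with $S_{f, -4c}$ plus an explicit correction), produces the $O(\la)$ positive gradient bulk $\la \int_\mc{C}[(1-4c)|\snabla w|^2 + 2c(\pd_t w)^2 - 4c(D_r w)^2]$ together with the $\la \int y^{2\ka-2} r^{-2}[r - (n-4)y] w^2$ and $\la \int y^{2\ka} r^{-3} w^2$ contributions, plus boundary terms on $\Ga_\ep$. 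The second integral is the source of the $\la^3$ bulk: expanding $S_{f, 0} w = \nab^\be f D_\be w + w_{f, 0} w$ and integrating by parts the first piece (with careful tracking of the twist correction $\tfrac{\ka}{y} \nab y \cdot \nab f$, which is of the same order), the leading singular behavior of $\nab^\be(|\nab f|^2 \nab_\be f) = 2 \nab^{\al\be} f\, \nab_\al f\, \nab_\be f + |\nab f|^2 \Box f$ is computed from \eqref{wAS} to be $-4\ka\, y^{6\ka-1}$, yielding a positive $\la^3 y^{6\ka - 1} w^2$ contribution that matches \eqref{Carleman}.

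The boundary integrations are carried out on $\mc{C}_\ep$, and the limit $\ep \searrow 0$ is analyzed separately on each component of $\Ga_\ep$. On $\Ga_\ep^+ = \{r = 1 - \ep\}$, the boundary contributions from the integrations by parts, after translating $w = e^{\la f} u$ and invoking the boundary admissibility hypothesis (Definition~\ref{admissible}), combine to give the Neumann trace contribution $\la \int_\Ga e^{2\la f}(\mc{N}_\ka u)^2$ on the left of \eqref{Carleman}. On $\Ga_\ep^- = \{r = \ep\}$, the boundary terms must vanish as $\ep \searrow 0$; this is the step where the hypothesis $n \neq 2$ is essential, since in two spatial dimensions the classical Morawetz boundary contribution at the origin is logarithmically divergent (see the remark after Theorem~\ref{T.Carleman0}). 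Translating the bulk from $w$ back to $u$ via $D_\al w = e^{\la f}(D_\al u + \la \nab_\al f \cdot u)$ and $w^2 = e^{2\la f} u^2$ then produces the right-hand side of \eqref{Carleman}, with low-order cross terms absorbed into the main positive bulk for $\la$ large enough.

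The main obstacle will be the detailed bookkeeping required to collect all bulk contributions at orders $\la^0, \la, \la^2, \la^3$ arising from the various integrations by parts, twist corrections, and applications of the Hardy inequality (Proposition~\ref{P.Hardy}), and to verify that the resulting coefficients are all positive after absorbing the $\|L_\pm w\|^2$ residues and the cross terms from the translation $w \to u$. The dimensions $n = 1, 3$ are especially delicate: when $n = 3$ the coefficient $\tfrac14(n-1)(n-3)$ of the $y^{2\ka} r^{-3} u^2$ bulk term in \eqref{multineq} vanishes, forcing one to extract the needed positivity from the $y^{2\ka - 2} r^{-2}[r - (n-4) y] u^2$ term alone, which is the source of the more stringent constraint $c \leq |\ka|/120$ in \eqref{eqc} for $n = 3$.
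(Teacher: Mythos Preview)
Your overall architecture---conjugate, decompose, apply the multiplier inequality of Proposition~\ref{T.multineq}, take boundary limits---is exactly the paper's route, and your identification of the leading $\lambda^3$ bulk term $-2\kappa\lambda^2 y^{6\kappa-1}$ is correct. However, two concrete gaps would prevent the argument from closing.

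\textbf{First, you cannot afford to drop $\|L_- w\|^2$.} You write $\|\mc{L}_\la w\|^2 \geq 2\int L_+ w\cdot L_- w$ and then work only with the cross term, which yields (as you yourself record) the first-order bulk $\la\int[(1-4c)|\snabla w|^2 + 2c(\pd_t w)^2 - 4c(D_r w)^2]$. The $(D_r w)^2$ coefficient here is \emph{negative}, and nothing else in the cross term supplies an unweighted positive $(D_r w)^2$. The paper does not throw away the analogue of $\|L_- w\|^2$: its inequality \eqref{conjest_30} retains the term $\la\int(S_{f,z}v)^2$, and then (see \eqref{conjest_32}--\eqref{conjest_34} for $n\geq 4$ and \eqref{conjest_41}--\eqref{conjest_43} for $n\leq 3$) expands this square to extract $\tfrac{9}{2}c(D_r v)^2$ (respectively $5c(D_r v)^2$), which is exactly what flips the sign and gives the net $\tfrac12 c(D_r v)^2$ in \eqref{conjest_40}. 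In the low-dimensional cases this step is even more delicate: one must first apply the pointwise Hardy inequality \eqref{hardy} with $q=4\kappa+1$ to $y^{4\kappa}(D_r v)^2$ inside the expanded square before one can beat the negative $-4c(D_r v)^2$. Your passing reference to ``absorbing the $\|L_\pm w\|^2$ residues'' is inconsistent with having already discarded them.

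\textbf{Second, the inner boundary terms do not simply vanish when $n=1$.} For $n\geq 3$ your claim is essentially right (volume factor $\ep^{n-1}$ kills everything except the $\nabla_\nu w_{f,z}$ term, which has a definite sign). But for $n=1$ the set $\Gamma_\ep^-$ has fixed measure, and the limits on $\Gamma_\ep^-$ are nonzero; see \eqref{bdrylim_a10}--\eqref{bdrylim_a12}. The paper shows they are collectively \emph{nonnegative} only after invoking the positive term $\tilde C\la^2\int_{-T}^T|v(t,0)|^2\,dt$ coming from the $[\la^2(y^{4\kappa}-4c^2t^2)-8c\la]\nabla_\nu f$ contribution, with $\la$ taken large. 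Your sketch misses this mechanism entirely.
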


The proof of Theorem~\ref{T.Carleman} is carried out in remainder of this section.

\begin{remark}
We note that parts of this proof will treat the cases $n = 1$, $n = 3$, and $n \geq 4$ separately.
This accounts for the difference in the assumptions for $c$ in \eqref{eqc}, which will affect the required timespan in our upcoming observability inequalities.
\end{remark}

\subsection{The Conjugated Inequality}

From here on, let us assume the hypotheses of Theorem \ref{T.Carleman}.
Let us also suppose that $\lambda_0$ is sufficiently large, with its precise value depending only on $n$ and $\kappa$.
In addition, we define the following:
\begin{equation}
\label{eqv} v := e^{ \lambda f } u \text{,} \qquad \mc{L} v := e^{ \lambda f } \Box_y ( e^{ - \lambda f } v ) \text{.}
\end{equation}

The objective of this subsection is to establish the following inequality for $v$:

\begin{lemma} \label{L.conjest}
For any $\lambda \geq \lambda_0$, we have the inequality
\begin{align}
\label{conjest} \frac{1}{ 4 \lambda } \int_{ \mc{C}_\varepsilon } ( \mc{L} v )^2 &\geq \frac c 2 \int_{ \mc{C}_\varepsilon } \left[( \pd_t v )^2+ | \snabla v |^2  +  ( D_r v )^2 \right] - \frac{1}{2} \kappa \lambda^2 \int_{ \mc{C}_\varepsilon } y^{ 6  \kappa - 1 } v^2 \\
\notag &\qquad + \frac{1}{2} \int_{ \Gamma_\varepsilon } \nabla_\nu f \cdot D_\beta v D^\beta v - \int_{ \Gamma_\varepsilon } S_{ f, z } v \cdot D_\nu v \\
\notag &\qquad - \frac{1}{2} \int_{ \Gamma_\varepsilon } [ \lambda^2 ( y^{ 4 \kappa } - 4 c^2 t^2 ) -8 c\la ] \nabla_\nu f  \cdot v^2 \\
\notag &\qquad + \frac{1}{2} \int_{ \Gamma_\varepsilon } \nabla_\nu w_{ f, z } \cdot v^2 + 2 \kappa ( 2 \kappa - 1 ) \int_{ \Gamma_\varepsilon } y^{ 2 \kappa - 2 } \nabla_\nu y \cdot v^2 \\
\notag &\qquad + \begin{cases}
  c_1 \int_{ \mc{C}_\varepsilon } y^{ 2 \kappa - 2 } r^{-3} \cdot v^2 & \quad n \geq 4 \\
  c_1 \int_{ \mc{C}_\varepsilon } y^{ 2 \kappa - 2 } r^{-2} \cdot v^2 + c_2 \int_{ \Gamma_\varepsilon } y^{ 4 \kappa - 1 } \nabla_\nu y \cdot v^2 & \quad n = 3 \\
  c_2 \int_{ \Gamma_\varepsilon } y^{ 4 \kappa - 1 } \nabla_\nu y \cdot v^2 & \quad n = 1
\end{cases} \,,
\end{align}
where $S_{ f, z }$ and $w_{ f, z }$ are defined as in \eqref{gral_wAS} and \eqref{wAS}, where the constant $c_1 > 0$ depends on $n$ and $\kappa$, and where the constant $c_2 > 0$ depends on $n$.
\end{lemma}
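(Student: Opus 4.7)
The plan is to derive \eqref{conjest} from Proposition \ref{T.multineq} (applied to $v$ in place of $u$) through an algebraic identity relating $\Box_y$ to the conjugated operator $\mc{L}$, followed by an arithmetic-geometric step. Using the product rule \eqref{prod_rule}, a direct computation parallel to the classical conjugation for $\Box$ yields
\[
\mc{L} v \;=\; \Box_y v - 2\lambda\, S_{f,z} v + \bigl[\lambda^2\, \nabla^\alpha f\nabla_\alpha f + 2\lambda z\bigr] v \,.
\]
From $\nabla_\alpha f = -y^{2\kappa}\nabla_\alpha y - 2ct\,\nabla_\alpha t$ and the identities \eqref{y_id}, one has $\nabla^\alpha f\nabla_\alpha f = y^{4\kappa} - 4c^2 t^2$. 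Setting $A := \lambda^2(y^{4\kappa} - 4c^2 t^2) - 8c\lambda$ and using $z=-4c$, the above rearranges to the key relation $\Box_y v = \mc{L} v + 2\lambda\, S_{f,z} v - A v$.

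Since $v = e^{\lambda f}u$ inherits the compact time support of $u$, Proposition \ref{T.multineq} applies to $v$. Substituting the key relation into $-\int\Box_y v\cdot S_{f,z}v$ and bounding $-\mc{L} v \cdot S_{f,z} v \leq \tfrac{1}{4\lambda}(\mc{L} v)^2 + \lambda(S_{f,z}v)^2$, we obtain after rearrangement
\[
\tfrac{1}{4\lambda}\int_{\mc{C}_\varepsilon}(\mc{L} v)^2 \;\geq\; \mathrm{bulk}_0(v) - \int_{\mc{C}_\varepsilon} A v\cdot S_{f,z}v + \lambda\int_{\mc{C}_\varepsilon}(S_{f,z}v)^2 + \mathrm{bdy}_0(v),
\]
where $\mathrm{bulk}_0$ and $\mathrm{bdy}_0$ denote the right-hand side of \eqref{multineq} with $u\mapsto v$. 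The $-\int A v \cdot S_{f,z}v$ term is handled by decomposing $S_{f,z}v = \nabla^\alpha f D_\alpha v + w_{f,z}v$, using $v D_\alpha v = \tfrac12\nabla_\alpha(v^2) - (\kappa/y)\nabla_\alpha y\,v^2$, and integrating the first piece by parts. The boundary contribution is exactly $-\tfrac12\int_{\Gamma_\varepsilon}A\nabla_\nu f\,v^2$, matching the corresponding term in \eqref{conjest}; the bulk $v^2$-contribution combines as $\tfrac12\nabla_\alpha(A\nabla^\alpha f) + (\kappa/y)A\nabla^\alpha f\nabla_\alpha y - Aw_{f,z}$, whose leading $\lambda^2$-coefficient equals $-3\kappa - \kappa + 2\kappa = -2\kappa$ times $y^{6\kappa-1}$. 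Since $\kappa<0$, this is $2|\kappa|\lambda^2 y^{6\kappa-1}$, which comfortably dominates the target $\tfrac{|\kappa|}{2}\lambda^2 y^{6\kappa-1}$ and leaves room to absorb the subleading-in-$\lambda$ pieces of $A$.

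The remaining $\tfrac{c}{2}(D_r v)^2$, $\tfrac{c}{2}(\partial_t v)^2$, and $\tfrac{c}{2}|\snabla v|^2$ terms are extracted as follows. Since $\nabla^\alpha f D_\alpha v = y^{2\kappa}D_r v + 2ct\,\partial_t v$ and $y^{4\kappa}\geq 1$ on $B_1$ (as $\kappa<0$), one has
\[
\lambda(S_{f,z}v)^2 \;\geq\; \tfrac{\lambda}{2}(D_r v)^2 - O(\lambda c^2 T^2)(\partial_t v)^2 + \text{(cross and lower-order $v^2$ terms)},
\]
so for $\lambda\geq\lambda_0$ large enough, with $c$ and $T$ satisfying \eqref{eqc}, this absorbs the $-4c(D_r v)^2$ from $\mathrm{bulk}_0$ and leaves $\tfrac{c}{2}(D_r v)^2$, while $2c(\partial_t v)^2$ and $(1-4c)|\snabla v|^2$ already exceed $\tfrac{c}{2}(\partial_t v)^2$ and $\tfrac{c}{2}|\snabla v|^2$. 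For the $n$-dependent bulk terms one inspects the positive combinations in $\mathrm{bulk}_0$: for $n\geq 4$, the $\tfrac14(n-1)(n-3)y^{2\kappa}r^{-3}v^2$ and $-\tfrac12(n-1)\kappa y^{2\kappa-2}r^{-1}v^2$ pieces (used respectively in $\{r\leq \tfrac12\}$ and $\{r\geq\tfrac12\}$) combine to give $c_1\int y^{2\kappa-2}r^{-3}v^2$; for $n=3$, the expression $-\tfrac12(n-1)\kappa y^{2\kappa-2}r^{-2}[r-(n-4)y]v^2 = -\kappa y^{2\kappa-2}r^{-2}(r+y)v^2$ is directly positive and yields $c_1\int y^{2\kappa-2}r^{-2}v^2$ after balancing competing $O(c)$ errors, which forces the auxiliary smallness $c\leq |\kappa|/120$. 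The extra boundary term $c_2\int_{\Gamma_\varepsilon}y^{4\kappa-1}\nabla_\nu y\,v^2$ for $n\in\{1,3\}$ arises from applying Proposition \ref{P.Hardy} with $q=4\kappa+1$ to the $\lambda\int y^{4\kappa}(D_r v)^2$ piece of $\lambda(S_{f,z}v)^2$: the Hardy divergence integrates to a boundary contribution of $\tfrac{6\kappa-1}{2}y^{4\kappa-1}\nabla_\nu y\,v^2$, with the favorable sign $(1-6\kappa)/2>0$.

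The principal difficulty is the simultaneous tracking of bulk and boundary contributions across the singular weights $y^{2\kappa-3}$, $y^{2\kappa-2}r^{-1}$, $y^{2\kappa-1}r^{-2}$, $y^{2\kappa}r^{-3}$, $y^{6\kappa-1}$, and $y^{4\kappa-1}$, whose signs depend on $n$ and $\kappa$ in non-uniform ways, together with the need to invoke Hardy-type inequalities to rebalance positivity between bulk and boundary in each dimension. The case split $n=1$, $n=3$, $n\geq 4$ is forced by which terms from \eqref{multineq} retain the correct sign directly and which require auxiliary Hardy bounds applied to $\lambda(S_{f,z}v)^2$; the failure of the same strategy at $n=2$ is consistent with the breakdown of the classical Morawetz estimate in that dimension.
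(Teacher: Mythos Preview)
Your overall architecture matches the paper's proof: conjugation identity $\mc{L}v=\Box_y v-2\lambda S_{f,z}v+Av$, the arithmetic–geometric bound on $\mc{L}v\cdot S_{f,z}v$, the integration by parts of $Av\,S_{f,z}v$ (your leading coefficient $-2\kappa\,y^{6\kappa-1}$ is correct), and the case split with Hardy at $q=4\kappa+1$ for $n\in\{1,3\}$. However, the step where you extract derivative control from $\lambda(S_{f,z}v)^2$ has a genuine gap.

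You write $\lambda(S_{f,z}v)^2\geq \tfrac{\lambda}{2}(D_r v)^2 - O(\lambda c^2T^2)(\partial_t v)^2 + \text{``lower-order $v^2$ terms''}$. Taking the \emph{full} $\lambda$-sized piece this way produces two errors that cannot be absorbed. First, the $(\partial_t v)^2$ error has coefficient $O(\lambda c^2T^2)$; since \eqref{eqc} only gives $c^2T^2\lesssim 1$, this is $O(\lambda)$ and swamps the available $2c(\partial_t v)^2$ from \eqref{multineq} as $\lambda\to\infty$. Second, the hidden ``lower-order'' term contains $-\lambda w_{f,z}^2 v^2$, whose leading behavior near $y=0$ is $-4\lambda\kappa^2 y^{4\kappa-2}v^2$. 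Because $4\kappa-2<6\kappa-1$ and $4\kappa-2<2\kappa-2$ for $\kappa\in(-\tfrac12,0)$, this singularity is \emph{worse} than both $y^{6\kappa-1}$ and $y^{2\kappa-2}$, so neither the $\lambda^2 y^{6\kappa-1}$ bulk nor the multiplier terms from \eqref{multineq} can absorb it in the $n\geq 4$ branch as you describe.

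The paper's remedy is to use only an $O(c)$-sized portion of $(S_{f,z}v)^2$ rather than the full $\lambda$ piece. For $n\geq 4$ one writes $\tfrac{1}{9}\lambda(S_{f,z}v)^2\geq c\,y^{-4\kappa}(S_{f,z}v)^2$ (valid since $y^{-4\kappa}\leq 1$ and $\lambda\gg c$); the dampening by $y^{-4\kappa}$ converts the dangerous $w_{f,z}^2$ contribution to $O(c)(y^{-2}+r^{-2})v^2$, which \emph{is} dominated by the combination of $-\tfrac12(n-1)\kappa y^{2\kappa-2}r^{-1}$, $\tfrac14(n-1)(n-3)y^{2\kappa}r^{-3}$, and $-\kappa\lambda^2 y^{6\kappa-1}$ on complementary regions. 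For $n\in\{1,3\}$ one similarly takes $\lambda(S_{f,z}v)^2\geq 60c[\tfrac13 y^{4\kappa}(D_r v)^2-4c^2t^2(\partial_t v)^2-w_{f,z}^2 v^2]$ and then applies Hardy with $q=4\kappa+1$; the numerical check $\tfrac{15}{4}(1-6\kappa)^2>240\kappa^2$ is what makes the $y^{4\kappa-2}$ terms cancel with the right sign. In both cases the $(\partial_t v)^2$ error now carries an extra factor of $c$ and is controlled via $c^2T^2\lesssim 1$. Once you downgrade the prefactor from $\lambda$ to $O(c)$ in this way, the rest of your outline goes through essentially as in the paper.
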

\begin{proof}
First, observe that by \eqref{twisted}--\eqref{Box_y}, we can expand $\mc{L} v$ as follows:
\begin{align}
\label{conjest_01} \mc{L} v &= e^{ \lambda f } \bar{D}^\alpha D_\alpha ( e^{ - \lambda f } v ) \\
\notag &= e^{ \lambda f } \bar{D}^\alpha ( e^{ - \lambda f } D_\alpha v ) - \lambda e^{ \lambda f } \bar{D}^\alpha ( e^{ - \lambda f } \nabla_\alpha f \cdot v ) \\
\notag &= \Box_y v - \lambda \nabla^\alpha f ( D_\alpha \psi + \bar{D}_\alpha v ) - \lambda \Box f \cdot v + \lambda^2 \nabla^\alpha f \nabla_\alpha f \cdot v \\
\notag &= \Box_y v - 2 \lambda S_{ f, z } v + \mc{A}_0 v \text{,}
\end{align}
where $\mc{A}_0$ is given by
\begin{equation}
\label{conjest_A0} \mc{A}_0 := \lambda^2 \nabla^\alpha f \nabla_\alpha f + 2 \la z = \lambda^2 ( y^{ 4 \kappa } - 4 c^2 t^2 ) - 8 c \la \text{.}
\end{equation}
Multiplying \eqref{conjest_01} by $S_{ f, z } v$ yields
\begin{equation}
\label{conjest_10} - \mc{L} v S_{ f, z } v = - \Box_y v S_{ f, z } v + 2 \lambda ( S_{ f, z } v )^2 - \mc{A}_0 \cdot v S_{ f, z } v \text{.}
\end{equation}

For the last term, we apply \eqref{twisted} and the product rule:
\begin{align}
\label{conjest_11} - \mc{A}_0 \cdot v S_{ f, z } v &= - \mc{A}_0 \cdot v ( \nabla^\alpha f D_\alpha v + w_{ f, z } v ) \\
\notag &= - \mc{A}_0 \cdot \left[ \frac{1}{2} \nabla^\alpha f \nabla_\alpha ( v^2 ) - \frac{ \kappa }{y} \nabla^\alpha f \nabla_\alpha y \cdot v^2 + w_{ f, z } v^2 \right] \\
\notag &= - \nabla^\alpha \left( \frac{1}{2} \mc{A}_0 \nabla_\alpha f \cdot v^2 \right) + \frac{1}{2} \nabla^\alpha f \nabla_\alpha \mc{A}_0 \cdot v^2 - z \mc{A}_0 \cdot v^2 \text{.}
\end{align}
Moreover, recalling \eqref{f,z} and \eqref{conjest_A0} yields
\begin{align}
\label{conjest_12} - z \mc{A}_0 &= 4 c \lambda^2 ( y^{ 4 \kappa } - 4 c^2 t^2 ) - 32\la c^2 \text{,} \\
\notag \frac{1}{2} \nabla^\alpha f \nabla_\alpha \mc{A}_0 &= \lambda^2 ( - 2 \kappa y^{ 6 \kappa - 1 } - 8 c^3 t^2 ) \text{.}
\end{align}
Combining \eqref{conjest_10}--\eqref{conjest_12} results in the identity
\begin{equation}
\label{conjest_20} - \mc{L} v S_{ f, z } v = - \Box_y v S_{ f, z } v + 2 \lambda ( S_{ f, z } v )^2 + \mc{B}_{ f, z } \cdot v^2 - \nabla^\alpha \left( \frac{1}{2} \mc{A}_0 \nabla_\alpha f \cdot v^2 \right) \text{,}
\end{equation}
where the coefficient $\mc{B}_{ f, z }$ is given by
\begin{align}
\label{conjest_A} \mc{B}_{ f, z } &:= \frac{1}{2} \nabla^\alpha f \nabla_\alpha \mc{A}_0 - z \mc{A}_0 \\
\notag &= \lambda^2 ( - 2 \kappa y^{ 6 \kappa - 1 } + 4 c y^{ 4 \kappa } - 24 c^3 t^2 ) - 32 \la c^2 \text{.}
\end{align}

Integrating \eqref{conjest_20} over $\mc{C}_\varepsilon$ and recalling \eqref{conjest_A} then yields
\begin{align}
\label{conjest_21} - \int_{ \mc{C}_\varepsilon } \mc{L} v S_{ f, z } v &= - \int_{ \mc{C}_\varepsilon } \Box_y v S_{ f, z } v + 2 \lambda \int_{ \mc{C}_\varepsilon } ( S_{ f, z } v )^2 \\
\notag &\qquad + \int_{ \mc{C}_\varepsilon } [ \lambda^2 ( - 2 \kappa y^{ 6 \kappa - 1 } + 4 c y^{ 4 \kappa } - 24 c^3 t^2 ) - 32 \lambda c^2 ] \cdot v^2 \\
\notag &\qquad - \frac{1}{2} \int_{ \Gamma_\varepsilon } [ \lambda^2 ( y^{ 4 \kappa } - 4 c^2 t^2 ) - 8 c \lambda ] \nabla_\nu f \cdot v^2 \text{.}
\end{align}
Notice that the bound \eqref{eqc} for $c$ implies (for all values of $n$)
\begin{equation}
\label{conjest_T1} 48 c^2 t^2 \leq 48 c^2 T^2 \leq 1 \leq y^{ 4 \kappa } \text{.}
\end{equation}
Then, with large enough $\lambda_0$ (depending on $n$ and $\kappa$), we obtain
\begin{align}
\label{conjest_22} \lambda^2 ( - 2 \kappa y^{ 6 \kappa - 1 } + 4 c y^{ 4 \kappa } - 24 c^3 t^2 ) - 32\la c^2 &\geq - 2 \kappa \lambda^2 \cdot y^{ 6 \kappa - 1 } - 32\la c^2 \\
\notag &\geq - \kappa \lambda^2 \cdot y^{ 6 \kappa - 1 } \text{.}
\end{align}
Noting in addition that
\[
| \mc{L} v S_{ f, z } v | \leq \frac{1}{ 4 \lambda } ( \mc{L} v )^2 + \lambda ( S_{ f, z } v )^2 \text{,}
\]
then~\eqref{conjest_21} and~\eqref{conjest_22} together imply
\begin{align}
\label{conjest_30} \frac{1}{ 4 \lambda } \int_{ \mc{C}_\varepsilon } ( \mc{L} v )^2 &\geq - \int_{ \mc{C}_\varepsilon } \Box_y v S_{ f, z } v + \lambda \int_{ \mc{C}_\varepsilon } ( S_{ f, z } v )^2 - \kappa \lambda^2 \int_{ \mc{C}_\varepsilon } y^{ 6 \kappa - 1 } \cdot v^2 \\
\notag &\qquad - \frac{1}{2} \int_{ \Gamma_\varepsilon } [ \lambda^2 ( y^{ 4 \kappa } - 4 c^2 t^2 ) - 8 c\la ] \nabla_\nu f \cdot v^2 \text{.}
\end{align}

At this point, the proof splits into different cases, depending on $n$.

\vspace{0.6pc}
\noindent
\emph{Case 1: $n \geq 4$.}
First, note that for large $\lambda_0$, we have
\begin{align}
\label{conjest_32} \frac{1}{9} \lambda ( S_{ f, z } v )^2 &\geq c y^{ - 4 \kappa } ( S_{ f, z } v )^2 \\
\notag &\geq c ( D_r v  )^2 + c ( 2 c t y^{ - 2 \kappa } \cdot \pd_t v + y^{ - 2 \kappa } w_{ f, z } \cdot v )^2 \\
\notag &\qquad + 2 c ( D_r v ) ( 2 c t y^{ - 2 \kappa } \cdot \pd_t v + y^{ - 2 \kappa } w_{ f, z } \cdot v ) \\
\notag &\geq \frac{1}{2} c ( D_r v  )^2 - c ( 2 c t y^{ - 2 \kappa } \cdot \pd_t v+ y^{ - 2 \kappa } w_{ f, z } \cdot v )^2 \\
\notag &\geq \frac{1}{2} c ( D_r v )^2 - 8 c^3 t^2 y^{ - 4 \kappa } \cdot ( \pd_t v)^2 - 2 c y^{ - 4 \kappa } w_{ f, z }^2 \cdot v^2 \\
\notag &\geq \frac{1}{2} c ( D_r v  )^2 - \frac{1}{6} c \cdot ( \pd_t v )^2 - 2 c y^{ - 4 \kappa } w_{ f, z }^2 \cdot v^2 \text{,}
\end{align}
where we also recalled \eqref{conjest_T1} and the definitions \eqref{f,z} and \eqref{gral_wAS} of $f$, $z$, and $S_{ f, z }$.
Moreover, recalling the formula \eqref{wAS} for $w_{ f, z }$, we obtain that
\begin{equation}
\label{conjest_33} - 18 c y^{ - 4 \kappa } w_{ f, z }^2 \cdot v^2 \geq -C( y^{-2} +r^{-2} ) \cdot v^2 \text{,}
\end{equation}
for some constant $C > 0$, depending on $n$ and $\kappa$.
Thus, for sufficiently large $\lambda_0$, it follows from \eqref{conjest_32} and \eqref{conjest_33} that
\begin{equation}
\label{conjest_34} \lambda ( S_{ f, z } v )^2 \geq \frac{9}{2} c ( D_r v )^2 - \frac{3}{2} c \cdot ( \pd_t v )^2 - C(  y^{-2} +  r^{-2} ) \cdot v^2 \text{.}
\end{equation}

Combining \eqref{conjest_30} with \eqref{conjest_34}, we obtain
\begin{align}
\label{conjest_35} \frac{1}{ 4 \lambda } \int_{ \mc{C}_\varepsilon } ( \mc{L} v )^2 &\geq - \int_{ \mc{C}_\varepsilon } \Box_y v S_{ f, z } v + \frac{9}{2} c \int_{ \mc{C}_\varepsilon } ( D_r v )^2 - \frac{3}{2} c \int_{ \mc{C}_\varepsilon } ( \pd_t v )^2 \\
\notag &\qquad - \kappa \lambda^2 \int_{ \mc{C}_\varepsilon } y^{ 6 \kappa - 1 } \cdot v^2 - C \int_{ \mc{C}_\varepsilon } ( y^{-2} + r^{-2} ) \cdot v^2 \\
\notag &\qquad - \frac{1}{2} \int_{ \Gamma_\varepsilon } [ \lambda^2 ( y^{ 4 \kappa } - 4 c^2 t^2 ) -  8 c\la ] \nabla_\nu f \cdot v^2 \text{.}
\end{align}
Applying the multiplier inequality \eqref{multineq} to \eqref{conjest_35} then results in the bound
\begin{align}
\label{conjest_40} \frac{1}{ 4 \lambda } \int_{ \mc{C}_\varepsilon } ( \mc{L} v )^2 &\geq \int_{ \mc{C}_\varepsilon } \left[ ( 1 - 4 c ) \cdot | \snabla v |^2 + \frac{1}{2} c \cdot ( \pd_tv )^2 + \frac{1}{2} c \cdot ( D_r v )^2 \right] \\
\notag &\qquad - \kappa \lambda^2 \int_{ \mc{C}_\varepsilon } y^{ 6 \kappa - 1 } \cdot v^2 - \frac{1}{2} ( n - 1 ) \kappa \int_{ \mc{C}_\varepsilon } y^{ 2 \kappa - 2 } r^{-1} \cdot v^2 \\
\notag &\qquad + \frac{1}{4} ( n - 1 ) ( n - 3 ) \int_{ \mc{C}_\varepsilon } y^{ 2 \kappa } r^{-3} \cdot v^2 \\
\notag &\qquad - C \int_{ \mc{C}_\varepsilon } ( y^{-2} + y^{ 2 \kappa - 1 } r^{-2} ) \cdot v^2 - \int_{ \Gamma_\varepsilon } S_{ f, z } v \cdot D_\nu v \\
\notag &\qquad + \frac{1}{2} \int_{ \Gamma_\varepsilon } \nabla_\nu f \cdot D_\beta v D^\beta v + \frac{1}{2} \int_{ \Gamma_\varepsilon } \nabla_\nu w_{ f, z } \cdot v^2 \\
\notag &\qquad - \frac{1}{2} \int_{ \Gamma_\varepsilon } [ \lambda^2 ( y^{ 4 \kappa } - 4 c^2 t^2 ) - 8 c\la ] \nabla_\nu f \cdot v^2 \\
\notag &\qquad + 2 \kappa ( 2 \kappa - 1 ) \int_{ \Gamma_\varepsilon } y^{ 2 \kappa - 2 } \nabla_\nu y \cdot v^2 \text{.}
\end{align}
(Here, $C$ may differ from previous lines, but still depends only on $n$ and $\kappa$.)

Let $d > 0$, and define now the (positive) quantities
\begin{align}
\label{conjest_J} J := d y^{ 2 \kappa - 2 } r^{-3} + C ( y^{-2} + y^{ 2 \kappa - 1 } r^{-2} ) \text{,} &\qquad J_0 := - \kappa \lambda^2 y^{ 6 \kappa - 1 } \text{,} \\
\notag J_1 := - \frac{1}{2} ( n - 1 ) \kappa y^{ 2 \kappa - 2 } r^{-1} \text{,} &\qquad J_2 := \frac{1}{4} ( n - 1 ) ( n - 3 ) y^{ 2 \kappa } r^{-3} \text{.}
\end{align}
Observe that for sufficiently small $d$ (depending on $n$ and $\kappa$), there is some $0 < \delta \ll 1$ (also depending on $n$ and $\kappa$) such that:
\begin{enumerate}[i)]
\item $J \leq J_2$ whenever $0 < r < \delta$.

\item $J \leq J_1$ whenever $1 - \delta < r < 1$.

\item For sufficiently large $\lambda_0$, we have that $J \leq J_0$ whenever $\delta \leq r \leq 1 - \delta$.
\end{enumerate}
Combining the above with \eqref{conjest_40} yields the desired bound \eqref{conjest}, in the case $n \geq 4$.

\vspace{0.6pc}
\noindent
\emph{Case 2: $n \leq 3$.}
For the cases $n = 1$ and $n = 3$, we first note that \eqref{eqc} implies
\begin{equation}
\label{conjest_T2} 240 c^2 t^2 \leq 240 c^2 T^2 \leq 1 \leq y^{ 4 \ka } \text{.}
\end{equation}

In this setting, we must deal with $( S_{ f, z } v )^2$ a bit differently.
To this end, we use \eqref{gral_wAS}, the fact that $\lambda_0$ is sufficiently large, and the inequality
\[
(A+B)^2 \geq (1 - 2 \ep) A^2 - \frac{1}{2\ep} (1 - 2\ep) B^2
\]
(with the values $\ep := \frac{1}{3}$, $A := y^{2 \ka} D_r v$, and $B := 2 c t (\pd_t v) + w_{ f, z} v$) in order to obtain
\begin{equation}
\label{conjest_41} \la ( S_{ f, z } v )^2 \geq 60c \left[ \frac{1}{3} y^{ 4 \ka } ( D_r v )^2 - 4 c^2 t^2 ( \pd_t v )^2 - w_{ f, z }^2 v^2 \right] \text{.}
\end{equation}
Moreover, expanding $w_{ f, z }^2$ using \eqref{wAS} and excluding terms with favorable sign yields
\begin{align}
\label{conjest_42} \la ( S_{ f, z } v )^2 &\geq 20 c y^{4\ka} ( D_r v )^2 - 240 c^3 t^2 ( \pd_t v )^2 - 540 c^3 v^2 \\
\notag &\qquad - 60 c \left[ 4 \ka^2 y^{ 4 \ka - 2 } + \frac{ (n-1)^2 }{ 4 r^2 } y^{4\ka} - \frac{ 2 \ka (n-1) }{r} y^{4 \ka - 1 } \right] v^2 \text{.}
\end{align}

The pointwise Hardy inequality \eqref{hardy}, with $q := 4 \kappa + 1$, yields
\begin{align*}
y^{ 4 \ka } ( D_r v )^2 &\geq \frac{1}{4} ( 1 - 6 \ka )^2 y^{ 4 \ka - 2 } \cdot v^2 + \frac{ (1 - 6 \ka) ( n - 1 ) }{ 2 r } y^{ 4 \ka - 1 } \cdot v^2 \\
&\qquad + \nabla^\beta \left[ \frac{ ( 1 - 6 \ka ) }{2} y^{ 4 \ka - 1 } \nabla_\beta y \cdot v^2 \right] \text{.}
\end{align*}
Combining the above with \eqref{conjest_T2} and \eqref{conjest_42}, and noting that
\[
\frac{15}{4} ( 1 - 6 \kappa )^2 > 240 \kappa^2 \text{,}
\]
we then obtain the bound
\begin{align}
\label{conjest_43} \la ( S_{ f, z } v )^2 &\geq 5c ( D_r v )^2 - c ( \pd_t v )^2 - 15 c (n-1)^2 y^{4\ka} r^{-2} v^2 \\
\notag &\qquad - C (n-1) y^{4\ka-1} r^{-1} v^2 + \nabla^\beta \left[ \frac{ 15 c ( 1 - 6 \kappa ) }{2} y^{ 4\ka - 1 } \nabla_\beta y \cdot v^2 \right] \text{,}
\end{align}
where $C > 0$ depends on $n$ and $\kappa$.

Now, applying the multiplier inequality \eqref{multineq} and \eqref{conjest_43} to \eqref{conjest_30}, we see that
\begin{align}
\label{conjest_50} \frac{1}{ 4 \lambda } \int_{ \mc{C}_\varepsilon } ( \mc{L} v )^2 &\geq \int_{ \mc{C}_\varepsilon } \left[ ( 1 - 4 c ) | \snabla v |^2 + c ( \pd_tv )^2 + c ( D_r v )^2 \right] \\
\notag &\qquad - \kappa \lambda^2 \int_{ \mc{C}_\varepsilon } y^{ 6 \kappa - 1 } \cdot v^2 - \frac{1}{2} ( n - 1 ) \kappa \int_{ \mc{C}_\varepsilon } y^{ 2 \kappa - 2 } r^{-1} \cdot v^2 \\
\notag &\qquad + \frac{1}{2} ( n - 1 ) ( n - 4 ) \kappa \int_{ \mc{C}_\varepsilon } y^{ 2 \kappa - 1 } r^{-2} \cdot v^2 \\
\notag &\qquad - 15 c (n-1)^2 \int_{ \mc{C}_\varepsilon } y^{ 4 \ka } r^{-2} \cdot v^2 \\
\notag &\qquad - C ( n - 1 ) \int_{ \mc{C}_\varepsilon } y^{ 4 \ka - 1 } r^{-1} \cdot v^2 \\
\notag &\qquad - \int_{ \Gamma_\varepsilon } S_{ f, z } v \cdot D_\nu v + \frac{1}{2} \int_{ \Gamma_\varepsilon } \nabla_\nu f \cdot D_\beta v D^\beta v \\
\notag &\qquad + \frac{1}{2} \int_{ \Gamma_\varepsilon } \nabla_\nu w_{ f, z } \cdot v^2 + 2 \kappa ( 2 \kappa - 1 ) \int_{ \Gamma_\varepsilon } y^{ 2 \kappa - 2 } \nabla_\nu y \cdot v^2 \\
\notag &\qquad - \frac{1}{2} \int_{ \Gamma_\varepsilon } [ \lambda^2 ( y^{ 4 \kappa } - 4 c^2 t^2 ) - 8 c \la ] \nabla_\nu f \cdot v^2 \\
\notag &\qquad + c_2 \int_{ \Gamma_\varepsilon } y^{ 4\ka - 1 } \nabla_\nu y \cdot v^2 \text{.}
\end{align}
For $n = 1$, the bound \eqref{conjest_50} immediately implies \eqref{conjest}.

For the remaining case $n = 3$, we also note from \eqref{eqc} that
\begin{equation}
\label{conjest_51} \frac{1}{2} ( n - 1 ) ( n - 4 ) \kappa y^{ 2 \kappa - 1 } r^{-2} - 15 c ( n - 1 )^2 y^{ 4 \kappa } r^{-2} \geq - \frac{1}{2} \kappa y^{ 2 \kappa - 1 } r^{-2} \text{.}
\end{equation}
To control the remaining bulk integrand $- C ( n - 1 ) y^{ 4 \kappa - 1 } r^{-1} \cdot v^{-2}$, we define
\begin{align}
\label{conjest_K} K: = d y^{ 2 \kappa - 2 } r^{-2} + C ( n - 1 ) y^{ 4 \kappa - 1 } r^{-1} \text{,} &\qquad K_0: = - \kappa \lambda^2 y^{ 6 \kappa - 1 } \text{,} \\
\notag K_1: = - \frac{1}{2} ( n - 1 ) \kappa y^{ 2 \kappa - 2 } r^{-1} \text{,} &\qquad K_2: = - \frac{1}{2} \kappa y^{ 2 \kappa - 1 } r^{-2} \text{.}
\end{align}

Like for the $n \geq 4$ case, as long as $d$ is sufficiently small (depending on $n$ and $\kappa$), then there exists $0 < \delta \ll 1$ (depending on $n$ and $\kappa$) such that:
\begin{enumerate}[i)]
\item $K \leq K_2$ whenever $0 < r < \delta$.

\item $K \leq K_1$ whenever $1 - \delta < r < 1$.

\item For large enough $\lambda_0$, we have that $K \leq K_0$ whenever $\delta \leq r \leq 1 - \delta$.
\end{enumerate}
Combining the above with \eqref{conjest_50} and \eqref{conjest_51} yields \eqref{conjest} for $n = 3$.
\end{proof}

\subsection{Boundary Limits}

In this subsection, we derive and control the limits of the boundary terms in \eqref{conjest} when $\varepsilon \searrow 0$.
More specifically, we show the following:

\begin{lemma} \label{L.bdrylim}
Let $\Gamma_\varepsilon^\pm$ be as in \eqref{Gamma_eps}.
Then, for $\lambda \geq \lambda_0$,
\begin{align}
\label{bdrylim_outer} - c_3 \int_\Gamma e^{ 2 \lambda f } ( \mc{N}_\kappa u )^2 &\leq \liminf_{ \varepsilon \searrow 0 } \left[ \int_{ \Gamma_\varepsilon^+ } \nabla_\nu f \cdot D_\beta v D^\beta v - 2 \int_{ \Gamma_\varepsilon^+ } S_{ f, z } v D_\nu v \right] \\
\notag &\qquad - \lim_{ \varepsilon \searrow 0 } \int_{ \Gamma_\varepsilon^+ } [ \lambda^2 ( y^{ 4 \kappa } - 4 c^2 t^2 ) - 8 c\la ] \nabla_\nu f \cdot v^2 \\
\notag &\qquad + \lim_{ \varepsilon \searrow 0 } \int_{ \Gamma_\varepsilon^+ } \nabla_\nu w_{ f, z } \cdot v^2 \\
\notag &\qquad + 4 \kappa ( 2 \kappa - 1 ) \lim_{ \varepsilon \searrow 0 } \int_{ \Gamma_\varepsilon^+ } y^{ 2 \kappa - 2 } \nabla_\nu y \cdot v^2 \text{,} \\
\notag 0 &= \lim_{ \varepsilon \searrow 0 } \int_{ \Gamma_\varepsilon^+ } y^{ 4 \kappa - 1 } \nabla_\nu y \cdot v^2 \text{,}
\end{align}
where the constant $c_3 > 0$ depends on $\kappa$.
In addition, for $\lambda \geq \lambda_0$,
\begin{align}
\label{bdrylim_inner} 0 &\leq \lim_{ \varepsilon \searrow 0 } \left[ \int_{ \Gamma_\varepsilon^- } \nabla_\nu f \cdot D_\beta v D^\beta v - 2 \int_{ \Gamma_\varepsilon^- } S_{ f, z } v D_\nu v \right] \\
\notag &\qquad - \lim_{ \varepsilon \searrow 0 } \int_{ \Gamma_\varepsilon^- } [ \lambda^2 ( y^{ 4 \kappa } - 4 c^2 t^2 ) - 8 c\la ] \nabla_\nu f \cdot v^2 \\
\notag &\qquad + \lim_{ \varepsilon \searrow 0 } \int_{ \Gamma_\varepsilon^- } \nabla_\nu w_{ f, z } \cdot v^2 + 4 \kappa ( 2 \kappa - 1 ) \lim_{ \varepsilon \searrow 0 } \int_{ \Gamma_\varepsilon^- } y^{ 2 \kappa - 2 } \nabla_\nu y \cdot v^2 \text{,} \\
\notag 0 &\leq \lim_{ \varepsilon \searrow 0 } \int_{ \Gamma_\varepsilon^- } y^{ 4 \kappa - 1 } \nabla_\nu y \cdot v^2 \text{.}
\end{align}
\end{lemma}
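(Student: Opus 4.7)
The plan is to split the analysis between the outer piece $\Gamma_\varepsilon^+$, where the twisted singularity $y \to 0$ is active, and the inner piece $\Gamma_\varepsilon^-$, where only polar-coordinate singularities at $r = 0$ can appear while $u$ itself remains smooth. On $\Gamma_\varepsilon^+$ I would exploit the boundary admissibility hypothesis from Definition \ref{admissible}; on $\Gamma_\varepsilon^-$ I would use the smoothness of $u$ together with the $r^{n-1}$ factor coming from the induced measure.

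For $\Gamma_\varepsilon^+$, note that $\nu = \partial_r$, $\nabla_\nu y = -1$, and $\nabla_\nu f = y^{2\kappa}$. Writing $v = e^{\lambda f}u$ and using the product rule for $D$, one has $D_r v = \lambda y^{2\kappa} v + e^{\lambda f} D_r u$, while the identities $y^\kappa D_r u \to \mathcal{N}_\kappa u$ and $(1-2\kappa)\,y^{-1+\kappa} u \to -\mathcal{N}_\kappa u$, both reformulations of \eqref{super_dirichlet}, pin down the leading $y$-behaviour of every piece. A careful order-of-vanishing check shows that $y^{2\kappa}(D_r v)^2 \to e^{2\lambda f}(\mathcal{N}_\kappa u)^2$ in $L^2(\Gamma)$, whereas $y^{2\kappa}(\partial_t v)^2$ and $y^{2\kappa}|\snabla v|^2$ vanish (by $\mathcal{D}_\kappa(y^{2\kappa}\partial_t u) = 0$ and $\mathcal{D}_\kappa u = 0$); the $\lambda$-dependent weights $\lambda^2 \nabla_\nu f\,(y^{4\kappa}-4c^2t^2)v^2$ and $c\lambda \nabla_\nu f\, v^2$ carry total powers of $y$ that are strictly positive since $-\tfrac12 < \kappa < 0$, so they vanish; and the $\nabla_\nu w_{f,z}\, v^2$ and $y^{2\kappa-2}\nabla_\nu y\cdot v^2$ terms each converge to a finite $\kappa$-dependent multiple of $\int_\Gamma e^{2\lambda f}(\mathcal{N}_\kappa u)^2$. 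Finally the $y^{4\kappa-1}\nabla_\nu y\cdot v^2$ integrand is $O(y^{2\kappa+1}) \to 0$, giving the second equality of \eqref{bdrylim_outer}. Summing all the surviving contributions, including the cross terms coming from $-2 S_{f,z}v\cdot D_\nu v$, one obtains a finite negative multiple of $\int_\Gamma e^{2\lambda f}(\mathcal{N}_\kappa u)^2$, which is precisely the claimed bound.

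For $\Gamma_\varepsilon^-$, where $y = 1-\varepsilon$ stays away from $0$, every integrand is a smooth function of $u$ and its ordinary derivatives, so the issue reduces to weighing $r^{-k}$ singularities against the $r^{n-1}$ factor in the measure. For $n \geq 4$ the worst singularity in the integrand is the $r^{-2}$ appearing in $\nabla_\nu w_{f,z}$, and the resulting integrals are $O(\varepsilon^{n-3}) \to 0$, so every limit is $0$ and all the claimed inequalities are trivially satisfied. For $n = 3$ the piece $-\tfrac{n-1}{2} y^{2\kappa} r^{-2}$ of $\partial_r w_{f,z}$ survives with the good sign once $\nabla_\nu = -\partial_r$ is accounted for, and together with the similar $r^{-1}$ tail of $w_{f,z}$, whose contribution vanishes in the $\varepsilon \to 0$ limit, yields a non-negative limit proportional to $\int_\Gamma v^2|_{r=0}$. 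For $n = 1$ the $(n-1)$-factors suppress the $r^{-1}$ and $r^{-2}$ singularities entirely, the remaining terms are regular, and the only surviving limit is $\int_\Gamma y^{4\kappa-1}\nabla_\nu y\cdot v^2 \to \int_\Gamma v^2|_{r=0} \geq 0$, matching the second inequality of \eqref{bdrylim_inner}.

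The main obstacle is the bookkeeping for $\Gamma_\varepsilon^+$, where several near-cancellations among the quadratic $D$-derivative terms and the cross terms of $S_{f,z} v \cdot D_r v$ must line up to leave only a finite multiple of $(\mathcal{N}_\kappa u)^2$. In particular, the mixed term $-4ct\partial_t v\cdot D_r v$ has to be controlled in $L^2(\Gamma)$ via Cauchy--Schwarz, using $\mathcal{D}_\kappa(y^{2\kappa}\partial_t u) = 0$ to ensure that $\partial_t v$ is subdominant with respect to the rate at which $D_r v$ grows, so that no divergent contribution appears beyond the controlled $\int_\Gamma e^{2\lambda f}(\mathcal{N}_\kappa u)^2$. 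The rest of the argument is a patient tabulation of leading powers of $y$ against the $L^2$ boundary limits supplied by Definition \ref{admissible}.
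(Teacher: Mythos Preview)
Your outer-boundary analysis and the cases $n\geq 3$ on the inner boundary are essentially in line with the paper's argument. However, there is a genuine gap in the $n=1$ inner-boundary case for the \emph{first} inequality of \eqref{bdrylim_inner}, which you do not address at all.

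For $n=1$ there is no $r^{n-1}$ volume factor to kill the $\Gamma_\varepsilon^-$ integrals, so every term in the first line of \eqref{bdrylim_inner} has a finite, generically nonzero limit at $r=0$. In particular, the combination
\[
\int_{\Gamma_\varepsilon^-}\nabla_\nu f\cdot D_\beta v D^\beta v \;-\; 2\int_{\Gamma_\varepsilon^-} S_{f,z}v\,D_\nu v
\]
expands (using $\nu=-\partial_r$, $\nabla_\nu f=-y^{2\kappa}$, and the formula \eqref{bdrylim_1} for $S_{f,z}v$) to
\[
\int_{\Gamma_\varepsilon^-} y^{2\kappa}\bigl[(\partial_t v)^2+(D_r v)^2\bigr]
 +\int_{\Gamma_\varepsilon^-}\bigl[4ct\,\partial_t v\,D_r v + 2w_{f,z}\,v\,D_r v\bigr]\,,
\]
and the cross terms here, together with the $\nabla_\nu w_{f,z}\cdot v^2$ and $y^{2\kappa-2}\nabla_\nu y\cdot v^2$ contributions, are only bounded below by $-C\int_{-T}^{T}|v(t,0)|^2\,dt$ for some $C=C(\kappa)>0$; they do \emph{not} vanish and are not manifestly nonnegative. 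What saves the inequality is the third line: since $\nabla_\nu f=-y^{2\kappa}<0$ on $\Gamma_\varepsilon^-$ and $y^{4\kappa}-4c^2t^2\geq\tfrac12$ by \eqref{eqc}, one gets
\[
-\lim_{\varepsilon\searrow 0}\int_{\Gamma_\varepsilon^-}\bigl[\lambda^2(y^{4\kappa}-4c^2t^2)-8c\lambda\bigr]\nabla_\nu f\cdot v^2
\;\geq\;\tilde C\,\lambda^2\int_{-T}^{T}|v(t,0)|^2\,dt\,,
\]
and this $\lambda^2$-large positive term absorbs all the $O(1)$ negative pieces once $\lambda\geq\lambda_0$ is taken sufficiently large. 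Your write-up makes no use of the parameter $\lambda$ here, and without it the first inequality in \eqref{bdrylim_inner} simply does not follow for $n=1$.

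A secondary point: on $\Gamma_\varepsilon^+$ you assert that $y^{2\kappa}|\snabla v|^2\to 0$ ``by $\mathcal{D}_\kappa u=0$'', but boundary admissibility gives no direct $L^2$ control on tangential derivatives of $u$. The paper sidesteps this by simply dropping the (nonnegative) angular term and passing to the $\liminf$ in \eqref{bdrylim_outer}; that is why the statement uses $\liminf$ rather than $\lim$ for that particular piece.
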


\begin{proof}
First, note that on $\Gamma_\varepsilon^\pm$, we have
\begin{equation}
\label{bdrylim_0} \nu |_{ \Gamma_\varepsilon^\pm } = \pm \partial_r \text{,} \qquad \nabla_\nu y |_{ \Gamma_\varepsilon^\pm } = \mp 1 \text{,} \qquad \nabla_\nu f |_{ \Gamma_\varepsilon^\pm } = \pm y^{ 2 \kappa } |_{ \Gamma_\varepsilon^\pm } \text{.}
\end{equation}
Moreover, note that \eqref{f,z} and \eqref{gral_wAS} imply
\begin{equation}
\label{bdrylim_1} S_{ f, z } v = y^{ 2 \kappa } D_r v + 2 c t \cdot \partial_t v + w_{ f, z } \cdot v \text{.}
\end{equation}

We begin with the outer limits \eqref{bdrylim_outer}.
The main observation is that by \eqref{f,z} and by the assumption that $u$ is boundary admissible (see Definition \ref{admissible}), we have
\begin{align}
\label{bdrylim_oa} \lim_{ \varepsilon \searrow 0 } \int_{ \Gamma_\varepsilon^+ } y^{ 2 \kappa } ( \partial_t v )^2 &= 0 \text{,} \\
\notag \lim_{ \varepsilon \searrow 0 } \int_{ \Gamma_\varepsilon^+ } y^{ 2 \kappa } ( D_r v )^2 &= \int_\Gamma e^{ 2 \lambda f } ( \mc{N}_\kappa u )^2 \text{,} \\
\notag \lim_{ \varepsilon \searrow 0 } \int_{ \Gamma_\varepsilon^+ } y^{ - 2 + 2 \kappa } v^2 &= ( 1 - 2 \kappa )^{-2} \int_\Gamma e^{ 2 \lambda f } ( \mc{N}_\kappa u )^2 \text{.}
\end{align}
We also recall that we have assumed $- \frac{1}{2} < \kappa < 0$.

For the first boundary term, we apply \eqref{bdrylim_0} and \eqref{bdrylim_oa} to obtain
\begin{align}
\label{bdrylim_o1} \liminf_{ \varepsilon \searrow 0 } \int_{ \Gamma_\varepsilon^+ } \nabla_\nu f \cdot D_\beta v D^\beta v &\geq \lim_{ \varepsilon \searrow 0 } \int_{ \Gamma_\varepsilon^+ } y^{ 2 \kappa } [ - ( \partial_t v )^2 + ( D_r v )^2 ] \\
\notag &= \int_\Gamma e^{ 2 \lambda f } ( \mc{N}_\kappa u )^2 \text{.} 
\end{align}
Next, expanding $S_{ f, z } v$ using \eqref{bdrylim_1}, noting from \eqref{wAS} that the leading-order behavior of $w_{ f, z }$ near $\Gamma$ is $- 2 \kappa \cdot y^{ 2 \kappa - 1 }$, and applying \eqref{bdrylim_oa}, we obtain that
\begin{align}
\label{bdrylim_o2} - 2 \lim_{ \varepsilon \searrow 0 } \int_{ \Gamma_\varepsilon^+ } S_{ f, z } v D_\nu v &= - 2 \lim_{ \varepsilon \searrow 0 } \int_{ \Gamma_\varepsilon^+ } [ y^{ 2 \kappa } ( D_r v )^2 + 2 c t \partial_t v D_r v + w_{ f, z } v D_r v ] \\
\notag &= - 2 \int_\Gamma e^{ 2 \lambda f } ( \mc{N}_\kappa u )^2 + 4 \kappa \lim_{ \varepsilon \searrow 0 } \int_{ \Gamma_\varepsilon^+ } y^{ 2 \kappa - 1 } v D_r v \\
\notag &= \left( - 2 + \frac{ 4 \kappa }{ 1 - 2 \kappa } \right) \int_\Gamma e^{ 2 \lambda f } ( \mc{N}_\kappa u )^2 \text{.}
\end{align}

The remaining outer boundary terms are treated similarly.
By \eqref{bdrylim_0} and \eqref{bdrylim_oa},
\begin{align}
\label{bdrylim_o3} - \lim_{ \varepsilon \searrow 0 } \int_{ \Gamma_\varepsilon^+ } [ \lambda^2 ( y^{ 4 \kappa } - 4 c^2 t^2 ) - 8 c\la ] \nabla_\nu f \cdot v^2 &= - \lim_{ \varepsilon \searrow 0 } \int_{ \Gamma_\varepsilon^+ } y^{ 6 \kappa } v^2 = 0 \text{,} \\
\notag 4 \kappa ( 2 \kappa - 1 ) \lim_{ \varepsilon \searrow 0 } \int_{ \Gamma_\varepsilon^+ } y^{ 2 \kappa - 2 } \nabla_\nu y \cdot v^2 &= \frac{ 4 \kappa }{ 1 - 2 \kappa } \int_\Gamma e^{ 2 \lambda f } ( \mc{N}_\kappa u )^2 \text{.}
\end{align}
Moreover, by \eqref{wAS} and \eqref{bdrylim_0}, we see that the leading-order behavior of $\partial_r w_{ f, z }$ is given by $- 2 \kappa ( 1 - 2 \kappa ) y^{ 2 \kappa - 2 }$.
Combining this with \eqref{bdrylim_0} and \eqref{bdrylim_oa} yields
\begin{align}
\label{bdrylim_o4} \lim_{ \varepsilon \searrow 0 } \int_{ \Gamma_\varepsilon^+ } \nabla_\nu w_{ f, z } \cdot v^2 &= - 2 \kappa ( 1 - 2 \kappa ) \lim_{ \varepsilon \searrow 0 } \int_\Gamma y^{ 2 \kappa - 2 } v^2 \\
\notag &= - \frac{ 2 \kappa }{ 1 - 2 \kappa } \int_\Gamma e^{ 2 \lambda f } ( \mc{N}_\kappa u )^2 \text{.}
\end{align}

Summing \eqref{bdrylim_o1}--\eqref{bdrylim_o4} yields the first part of \eqref{bdrylim_outer}.
The second part of \eqref{bdrylim_outer} similarly follows by applying \eqref{bdrylim_0} and \eqref{bdrylim_oa}.

Next, for the interior limits \eqref{bdrylim_inner}, we split into two cases:

\vspace{0.6pc}
\noindent
\emph{Case 1: $n \geq 3$.}
In this case, we begin by noting that the volume of $\Gamma_\varepsilon^-$ satisfies
\begin{equation}
\label{bdrylim_ia} | \Gamma_\varepsilon^- | \lesssim_{ T, n } \varepsilon^{ n - 1 } \text{.}
\end{equation}
Furthermore, since $u$ is smooth on $\mc{C}$, then \eqref{f,z} and \eqref{eqv} imply that $\partial_t v$, $\snabla v$, $D_r v$, and $v$ are all uniformly bounded whenever $r$ is sufficiently small.
Combining the above with \eqref{wAS}, \eqref{bdrylim_0}, \eqref{bdrylim_1}, we obtain that the following limits vanish:
\begin{align}
\label{bdrylim_i1} 0 &= \lim_{ \varepsilon \searrow 0 } \left[ \int_{ \Gamma_\varepsilon^- } \nabla_\nu f \cdot D_\beta v D^\beta v - 2 \int_{ \Gamma_\varepsilon^- } S_{ f, z } v D_\nu v \right] \\
\notag &\qquad - \lim_{ \varepsilon \searrow 0 } \int_{ \Gamma_\varepsilon^- } [ \lambda^2 ( y^{ 4 \kappa } - 4 c^2 t^2 ) - 8 c\la ] \nabla_\nu f \cdot v^2 \\
\notag &\qquad + 4 \kappa ( 2 \kappa - 1 ) \lim_{ \varepsilon \searrow 0 } \int_{ \Gamma_\varepsilon^- } y^{ 2 \kappa - 2 } \nabla_\nu y \cdot v^2 \text{,} \\
\notag 0 &= \lim_{ \varepsilon \searrow 0 } \int_{ \Gamma_\varepsilon^- } y^{ 4 \kappa - 1 } \nabla_\nu y \cdot v^2 \text{.}
\end{align}

This leaves only one remaining limit in \eqref{bdrylim_inner}; for this, we note, from \eqref{wAS}, that the leading-order behavior of $- \partial_r w_{ f, z }$ near $r = 0$ is $\frac{1}{2} ( n - 1 ) r^{-2} y^{ 2 \kappa }$.
As a result,
\begin{align}
\label{bdrylim_i2} \lim_{ \varepsilon \searrow 0 } \int_{ \Gamma_\varepsilon^- } \nabla_\nu w_{ f, z } \cdot v^2 &= \frac{ n - 1 }{2} \lim_{ \varepsilon \searrow 0 } \int_{ \Gamma_\varepsilon^- } r^{-2} y^{ 2 \kappa } v^2 \\
\notag &= \begin{cases} 0 & \quad n > 3 \\ C \int_{ -T }^T | v ( t, 0 ) |^2 dt & \quad n = 3 \end{cases} \text{,}
\end{align}
where the last integral is over the line $r = 0$, and where the constant $C$ depends only on $n$.
Combining \eqref{bdrylim_i1} and \eqref{bdrylim_i2} yields \eqref{bdrylim_inner} in this case.

\vspace{0.6pc}
\noindent
\emph{Case 2: $n = 1$.}
Here, we can no longer rely on \eqref{bdrylim_ia} to force most limits to vanish, so we must examine all the terms more carefully.

First, from \eqref{wAS}, \eqref{bdrylim_0}, \eqref{bdrylim_1}, we have that
\begin{align*}
&\int_{ \Gamma_\varepsilon^- } \nabla_\nu f \cdot D_\beta v D^\beta v - 2 \int_{ \Gamma_\varepsilon^- } S_{ f, z } v D_\nu v \\
\notag &\quad = \int_{ \Gamma_\varepsilon^- } y^{ 2 \kappa } [ ( \pd_t v )^2 + ( D_r v )^2 ] + \int_{ \Gamma_\varepsilon^- } [ 4 c t \cdot \partial_t v D_r v - 4 \kappa y^{ 2 \kappa - 1 } v D_r v ] \text{.}
\end{align*}
Recalling also our assumption \eqref{eqc} for $c$, we conclude from the above that
\begin{align}
\label{bdrylim_a10} \lim_{ \varepsilon \searrow 0 } \left[ \int_{ \Gamma_\varepsilon^- } \nabla_\nu f \cdot D_\beta v D^\beta v - 2 \int_{ \Gamma_\varepsilon^- } S_{ f, z } v D_\nu v \right] &\geq - C \lim_{ \varepsilon \searrow 0 } \int_{ \Gamma_\varepsilon^- } y^{ 2 \kappa - 2 } v^2 \\
\notag &= - C \int_{-T}^T | v ( t, 0 ) |^2 dt \text{,}
\end{align}
where the last integral is over the line $r = 0$, and where $C$ depends only on $\kappa$.
Moreover, letting $\lambda_0$ be sufficiently large and recalling \eqref{eqc} and \eqref{bdrylim_0}, we obtain
\begin{equation}
\label{bdrylim_a11} - \lim_{ \varepsilon \searrow 0 } \int_{ \Gamma_\varepsilon^- } [ \lambda^2 ( y^{ 4 \kappa } - 4 c^2 t^2 ) - 8 c\la ] \nabla_\nu f \cdot v^2 \geq \tilde{C} \lambda^2 \int_{-T}^T | v ( t, 0 ) |^2 dt \text{,}
\end{equation}
for some constant $\tilde{C} > 0$.

Next, applying \eqref{wAS} and \eqref{bdrylim_0} in a similar manner as before, we obtain inequalities for the remaining limits in the right-hand side of \eqref{bdrylim_inner}:
\begin{align}
\label{bdrylim_a12} \lim_{ \varepsilon \searrow 0 } \int_{ \Gamma_\varepsilon^- } \nabla_\nu w_{ f, z } \cdot v^2 &\geq - C \int_{-T}^T | v ( t, 0 ) |^2 dt \text{,} \\
\notag 4 \kappa ( 2 \kappa - 1 ) \lim_{ \varepsilon \searrow 0 } \int_{ \Gamma_\varepsilon^- } y^{ 2 \kappa - 2 } \nabla_\nu y \cdot v^2 &\geq - C \int_{-T}^T | v ( t, 0 ) |^2 dt \text{,} \\
\notag \lim_{ \varepsilon \searrow 0 } \int_{ \Gamma_\varepsilon^- } y^{ 4 \kappa - 1 } \nabla_\nu y \cdot v^2 &= 2 \int_{-T}^T | v ( t, 0 ) |^2 dt \text{.}
\end{align}
Here, $C$ denotes various positive constants that depend on $\kappa$.
Finally, combining \eqref{bdrylim_a10}--\eqref{bdrylim_a12} and taking $\lambda_0$ to be sufficiently large results in \eqref{bdrylim_inner}.
\end{proof}

\subsection{Completion of the Proof}

We are now in position to complete the proof of Theorem \ref{T.Carleman}.
First, recalling the definitions \eqref{f,z} and \eqref{eqv} of $f$ and $v$ and the fact that $c^2 t^2 \lesssim 1$ by our assumption \eqref{eqc}, we have that
\begin{align}
\label{conjest_58} e^{ 2 \la f } ( \partial_t u )^2 &\lesssim ( \pd_t v )^2 + \la^2 c^2 t^2 v^2 \lesssim ( \partial_t v )^2 + \la^2 y^{ 6 \ka - 1 } v^2 \text{,} \\
\notag e^{ 2 \la f } ( D_r u )^2 &\lesssim ( D_r v )^2 + \la^2 y^{ 4 \ka } v^2 \lesssim ( D_r v )^2 + \la^2 y^{ 6 \ka - 1 } v^2 \text{,} \\
\notag e^{ 2 \la f } | \snabla u |^2 &= |\snabla v|^2 \text{.}
\end{align}
Furthermore, by \eqref{Box_y_kappa} and \eqref{eqv}, we observe that
\begin{equation}
\label{conjest_59} ( \mc{L} v )^2 \leq 2 e^{ 2 \la f } [ (\Box_\kappa u )^2 + \kappa ( n - 1 ) y^{-2} r^{-2} \cdot u^2 ] \text{.}
\end{equation}

Therefore, using these bounds in Lemma~\ref{L.conjest}, it follows that
\begin{align}
\label{conjest_60} &2 \int_{ \mc{C}_\varepsilon } e^{ 2 \la f } ( \Box_\kappa u )^2 + 2 \kappa ( n - 1 ) \int_{ \mc{C}_\varepsilon } e^{ 2 \la f } y^{-1} r^{-1} \cdot u^2 \\
\notag &\quad \geq C \la \int_{ \mc{C}_\varepsilon } e^{ 2 \la f } [ ( \pd_t u )^2 + | \snabla u |^2 + ( D_r u )^2 ] + C \lambda^3 \int_{ \mc{C}_\varepsilon } e^{ 2 \lambda f } y^{ 6 \kappa - 1 } u^2 \\
\notag &\quad\qquad + 2 \la \int_{ \Gamma_\varepsilon } \nabla_\nu f \cdot D_\beta v D^\beta v - 4 \la \int_{ \Gamma_\varepsilon } S_{ f, z } v \cdot D_\nu v \\
\notag &\quad\qquad - 2 \la \int_{ \Gamma_\varepsilon } [ \lambda^2 ( y^{ 4 \kappa } - 4 c^2 t^2 ) - 8 c\la ] \nabla_\nu f \cdot v^2 \\
\notag &\quad\qquad + 2 \la \int_{ \Gamma_\varepsilon } \nabla_\nu w_{ f, z } \cdot v^2 + 8\la \kappa ( 2 \kappa - 1 ) \int_{ \Gamma_\varepsilon } y^{ 2 \kappa - 2 } \nabla_\nu y \cdot v^2 \\
\notag &\quad\qquad + \begin{cases}
  C \lambda \int_{ \mc{C}_\varepsilon } e^{ 2 \la f } y^{ 2 \kappa - 2 } r^{-3} \cdot u^2 & \quad n \geq 4 \\
  C \lambda \int_{ \mc{C}_\varepsilon } e^{ 2 \la f } y^{ 2 \kappa - 2 } r^{-2} \cdot u^2 + 4 c_2 \lambda \int_{ \Gamma_\varepsilon } y^{ 4 \kappa - 1 } \nabla_\nu y \cdot v^2 & \quad n = 3 \\
  4 c_2 \lambda \int_{ \Gamma_\varepsilon } y^{ 4 \kappa - 1 } \nabla_\nu y \cdot v^2 & \quad n = 1
\end{cases} \,,
\end{align}
for some constant $C > 0$ depending on $n$ and $\kappa$.
Note that if $\lambda_0$ is sufficiently large, then the last term on the left-hand side of \eqref{conjest_60} can be absorbed into the last term on the right-hand side of \eqref{conjest_60} (for all values of $n$).
From this, we obtain
\begin{align}
\label{conjest_70} \int_{ \mc{C}_\varepsilon } e^{ 2 \la f } ( \Box_\kappa u )^2 &\geq C \la \int_{ \mc{C}_\varepsilon } e^{ 2 \la f } [ ( \pd_t u )^2 + | \snabla u |^2 + ( D_r u )^2 + \lambda^2 y^{ 6 \kappa - 1 } u^2 ] \\
\notag &\qquad + \begin{cases}
  C \lambda \int_{ \mc{C}_\varepsilon } e^{ 2 \la f } y^{ 2 \kappa - 2 } r^{-3} \cdot u^2 & \quad n \geq 4 \\
  C \lambda \int_{ \mc{C}_\varepsilon } e^{ 2 \la f } y^{ 2 \kappa - 2 } r^{-2} \cdot u^2 & \quad n = 3 \\
  0 & \quad n = 1
\end{cases} \\
\notag &\qquad + \la \int_{ \Gamma_\varepsilon } \nabla_\nu f \cdot D_\beta v D^\beta v - 2 \la \int_{ \Gamma_\varepsilon } S_{ f, z } v \cdot D_\nu v \\
\notag &\qquad - \la \int_{ \Gamma_\varepsilon } [ \lambda^2 ( y^{ 4 \kappa } - 4 c^2 t^2 ) - 8 c\la ] \nabla_\nu f \cdot v^2 \\
\notag &\qquad + \la \int_{ \Gamma_\varepsilon } \nabla_\nu w_{ f, z } \cdot v^2 + 4 \la \kappa ( 2 \kappa - 1 ) \int_{ \Gamma_\varepsilon } y^{ 2 \kappa - 2 } \nabla_\nu y \cdot v^2 \\
\notag &\qquad + \begin{cases}
  0 & \quad n \geq 4 \\
  2 c_2 \lambda \int_{ \Gamma_\varepsilon } y^{ 4 \kappa - 1 } \nabla_\nu y \cdot v^2 & \quad n \leq 3
\end{cases} \text{.}
\end{align}

Finally, the desired inequality \eqref{Carleman} follows by taking the limit $\varepsilon \searrow 0$ in \eqref{conjest_70} and applying all the inequalities from Lemma \ref{L.bdrylim}.

\section{Observability} \label{S.Observability}

Our aim in this section is to show that the Carleman estimates of Theorem \ref{T.Carleman} imply a boundary observability property for solutions to wave equations on the cylindrical spacetime $\mc{C}$ containing potentials that are critically singular at the boundary $\Gamma$.
More specifically, we establish the following result, which is a precise and a slightly stronger version of the result stated in Theorem \ref{T.Observability0}.

\begin{theorem}\label{T.Observability}
Assume $n \neq 2$, and fix $-\frac{1}{2} < \ka < 0$.
Let $u$ be a solution to
\begin{equation}
\label{weqn} \Box_\ka u = D_X u + V u \text{,}
\end{equation}
on $\bar{\mc{C}}$, where the vector field $X: \mc{C} \rightarrow \R^{1+n}$ and the potential $V: \mc{C} \rightarrow \R$ satisfy
\begin{equation}
\label{hypXV} |X| \lesssim 1\text{,} \qquad | V | \lesssim \frac{1}{y} + \frac{ n - 1 }{r} \text{,}
\end{equation}
In addition, assume that:
\begin{enumerate}[i)]
\item $u$ is boundary admissible (in the sense of Definition \ref{admissible}).

\item $u$ has finite twisted $H^1$-energy for any $\tau \in ( -T, T )$:
\begin{equation}
\label{H1} E_1 [u] ( \tau ) = \int_{ \mc{C} \cap \{ t = \tau \} } ( ( \pd_t u )^2 + ( D_r u )^2 + | \snabla u |^2 + u^2 ) < \infty \text{.}
\end{equation}
\end{enumerate}
Then, for sufficiently large observation time $T$ satisfying
\begin{equation} \label{obstime}
\begin{cases}
  T > \frac{ 4 \sqrt{3} }{ 1 + 2 \ka } & \quad n \geq 4 \\
  T > \max \left\{ \frac{ 4 \sqrt{15} }{ 1 + 2 \ka }, \frac{ 2 \sqrt{30} }{ \sqrt{ | \ka | ( 1 + 2 \ka ) } } \right\} & \quad n = 3 \\
  T > \frac{ 4 \sqrt{15} }{ 1 + 2 \ka } & \quad n = 1
\end{cases} \text{,}
\end{equation}
we have the boundary observability inequality
\begin{equation}
\label{Observability} \int_\Gamma (\mc{N}_\ka u)^2 \gtrsim E_1 [u] (0) \text{,}
\end{equation}
where the constant of the inequality depends on $n$, $\kappa$, $T$, $X$, and $V$.
\end{theorem}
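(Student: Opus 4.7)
The plan is to deduce observability from Theorem \ref{T.Carleman} via a cutoff-and-absorb argument, followed by a twisted energy estimate to reconstruct $E_1[u](0)$. Let $\chi\in C^\infty(\R;[0,1])$ be a temporal cutoff with $\chi\equiv 1$ on $[-T_0,T_0]$ for some $0<T_0<T$, and $\mathrm{supp}\,\chi\subset(-T+\delta,T-\delta)$ for small $\delta>0$. Then $w:=\chi u$ is boundary admissible and satisfies the support condition of Theorem \ref{T.Carleman}, and \eqref{weqn} yields
\begin{equation*}
\Box_\ka w = \chi\,D_X u + \chi\,V u + 2\chi'(t)\,\pd_t u + \chi''(t)\,u.
\end{equation*}
Squaring and inserting into \eqref{Carleman}, while using $|X|\lesssim 1$ and $V^2\lesssim y^{-2}+r^{-2}$, we obtain an inequality of the form
\begin{align*}
\la \int_\Ga e^{2\la f}(\mc{N}_\ka u)^2 &+ C \int_{\mc{C}} \chi^2 e^{2\la f}\bigl[|Du|^2+(y^{-2}+r^{-2})u^2\bigr] + R_\delta \\
&\gtrsim \la \int_{\mc{C}} \chi^2 e^{2\la f}\bigl[(\pd_t u)^2+|\snabla u|^2+(D_r u)^2\bigr] + \la^3 \int_{\mc{C}} \chi^2 e^{2\la f}y^{6\ka-1}u^2,
\end{align*}
where $R_\delta$ is supported in $\mc{S}_\delta:=\{T-\delta\leq|t|\leq T\}\times B_1$ and bounded by a multiple of $\int_{\mc{S}_\delta}e^{2\la f}[(\pd_t u)^2+u^2]$.

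For $\la$ sufficiently large, the $|Du|^2$ term on the left is absorbed into the $H^1$ part of the right. The singular potential term is handled case by case: for $n\geq 4$ (resp.\ $n=3$), since $\ka<0$ and $y,r\leq 1$, one has $y^{-2}+r^{-2}\lesssim y^{2\ka-2}r^{-3}$ (resp.\ $y^{2\ka-2}r^{-2}$), so the dimension-dependent positive bulk term in \eqref{Carleman} absorbs it for large $\la$; for $n=1$ one has $|V|\lesssim y^{-1}$ and Proposition \ref{P.Hardy} with $q=2\ka+1$ trades $y^{-2}u^2$ against $y^{2\ka}(D_r u)^2$ modulo a boundary flux already controlled by $(\mc{N}_\ka u)^2$, after which the $\la^3 y^{6\ka-1}$ term handles the remainder. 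The error $R_\delta$ is handled by comparison of the weight $f$ on the two relevant regions: on $\mc{S}_\delta$ one has $f\leq -c(T-\delta)^2$, whereas on $\{|t|\leq T_0\}\times B_1$ one has $f\geq -\frac{1}{1+2\ka}-cT_0^2$. The timespan hypothesis \eqref{obstime} combined with the sharp choice of $c$ permitted by \eqref{eqc} is precisely what ensures the strict inequality
\begin{equation*}
c(T-\delta)^2 - cT_0^2 > \frac{1}{1+2\ka}
\end{equation*}
for appropriate $\delta$ and $T_0$, making $R_\delta$ exponentially smaller than the bulk gain as $\la\to\infty$.

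Discarding the absorbed terms and dividing by the minimum value of $e^{2\la f}$ on $\{|t|\leq T_0\}\times B_1$ yields
\begin{equation*}
\int_\Ga (\mc{N}_\ka u)^2 \gtrsim \int_{-T_0}^{T_0} E_1[u](\tau)\,d\tau,
\end{equation*}
where the $u^2$-contribution to $E_1$ is restored using a Poincar\'e/Hardy estimate. To conclude \eqref{Observability} it remains to show $E_1[u](0)\lesssim E_1[u](\tau)$ uniformly for $\tau\in[-T_0,T_0]$; this follows from a twisted energy identity for $\Box_y$, adapted to the perturbation $D_X+V$ by a Gronwall argument using \eqref{hypXV} and extending the energy propagation of \cite{Warnick}. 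Integrating in $\tau$ over $[-T_0,T_0]$ then yields \eqref{Observability}. The principal difficulty will be the absorption of $Vu$: because the exponent $6\ka-1$ can lie either above or below $-2$ as $\ka$ varies in $(-\tfrac12,0)$, no single term on the right-hand side of \eqref{Carleman} dominates $y^{-2}u^2$ uniformly in $\ka$, and the argument must carefully coordinate the $\la^3 y^{6\ka-1}$ weight with the dimension-dependent positive contributions (for $n\geq 3$) or with the Hardy estimate of Proposition \ref{P.Hardy} (for $n=1$).
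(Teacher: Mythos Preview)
Your overall architecture---temporal cutoff, apply Theorem~\ref{T.Carleman} to $\chi u$, absorb the forcing terms for large $\lambda$, use the weight gap \eqref{estobs_f} to kill the commutator error, then Gronwall the twisted energy---is exactly the paper's strategy, and your identification of the weight inequality $c(T-\delta)^2 - cT_0^2 > (1+2\kappa)^{-1}$ as the origin of the timespan condition \eqref{obstime} is correct.

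There is one genuine difference and one genuine gap.  The difference: for $n\geq 3$ you absorb $e^{2\lambda f}(y^{-2}+r^{-2})u^2$ directly into the dimension-dependent bulk terms $C_0\lambda\int e^{2\lambda f}y^{2\kappa-2}r^{-3}u^2$ (resp.\ $r^{-2}$), using the elementary pointwise bound $y^{-2}+r^{-2}\lesssim y^{2\kappa-2}r^{-3}$.  This is valid and in fact slightly slicker than the paper, which instead proves a unified Hardy estimate (Lemma~\ref{L.hardy}, via Proposition~\ref{P.Hardy} with $q=1$) showing $\int(y^{-2}+(n-1)r^{-2})\phi^2\lesssim\int(D_r\phi)^2$, applies it to $\phi=e^{\lambda f}u$, and then absorbs the resulting $(D_r u)^2+\lambda^2 y^{4\kappa}u^2$ into the $\lambda(D_r u)^2$ and $\lambda^3 y^{6\kappa-1}u^2$ terms (the latter since $y^{4\kappa}\leq y^{6\kappa-1}$ for $y\leq 1$).

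The gap is your $n=1$ step.  Proposition~\ref{P.Hardy} with $q=2\kappa+1$ gives $y^{2\kappa}(D_r\phi)^2\gtrsim y^{2\kappa-2}\phi^2$, so after applying it to $e^{\lambda f}u$ and expanding you are left with a term $\int e^{2\lambda f}y^{2\kappa}(D_r u)^2$.  But the Carleman right-hand side only supplies $\lambda\int e^{2\lambda f}(D_r u)^2$, and since $y^{2\kappa}\to\infty$ as $y\to 0$ this cannot be absorbed uniformly (one can build admissible $u$ with $(D_r u)^2$ concentrated near $y=0$ so that the ratio blows up).  The fix is to use $q=1$ instead: then Hardy yields $(D_r\phi)^2\gtrsim y^{-2}\phi^2$ directly, and applying this to $\phi=e^{\lambda f}u$ produces only $e^{2\lambda f}[(D_r u)^2+\lambda^2 y^{4\kappa}u^2]$, both of which are absorbed as above.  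This is precisely the paper's Lemma~\ref{L.hardy}, and it handles all dimensions at once---so your case split, while correct for $n\geq 3$, is ultimately unnecessary.
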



\subsection{Preliminary Estimates}

In order to prove Theorem \ref{T.Observability}, we require preliminary estimates.
The first is a Hardy estimate to control singular integrands:

\begin{lemma} \label{L.hardy}
Assume the hypotheses of Theorem \ref{T.Observability}.
Then,
\begin{equation}
\label{hardy_int} \int_{ \mc{C} \cap \{ t_0 < t < t_1 \} } \left( \frac{1}{ y^2 } + \frac{ n - 1 }{ r^2 } \right) u^2 \lesssim \int_{ \mc{C} \cap \{ t_0 < t < t_1 \} } (D_r u )^2 \text{,}
\end{equation}
for any $-T \leq t_0 < t_1 \leq T$, where the constant depends only on $n$ and $\kappa$.
\end{lemma}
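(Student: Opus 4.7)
The plan is to bound the two singular integrands $y^{-2}u^2$ and $r^{-2}u^2$ separately, each through a suitable pointwise inequality that is integrated over $\mc{C}\cap\{t_0<t<t_1\}$. The main technical issue will be ensuring that every boundary contribution produced by the divergence theorem either vanishes or carries the favorable sign.

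For the $y^{-2}$ control, I would apply Proposition~\ref{P.Hardy} with $q=1$, which yields the pointwise inequality
\begin{equation*}
(D_r u)^2 \geq \tfrac{(2\kappa-1)^2}{4}\,y^{-2}u^2 \;-\;(n-1)\bigl(\kappa-\tfrac{1}{2}\bigr)\,y^{-1}r^{-1}u^2 \;-\; \nabla^\beta\!\Bigl[\bigl(\kappa-\tfrac{1}{2}\bigr)\,y^{-1}\nabla_\beta y \cdot u^2\Bigr].
\end{equation*}
Since $\kappa<\tfrac12$, the middle term on the right has a positive coefficient and may simply be discarded. Upon integrating, the temporal boundary contributions vanish automatically because $\nabla y$ is purely spatial; the contribution at $r=1$ vanishes thanks to Definition~\ref{admissible}, which forces $u\sim y^{1-\kappa}$ near $y=0$ (the subleading Frobenius branch), so that $y^{-1}u^2\sim y^{1-2\kappa}\to 0$; and the contribution at $r=0$ vanishes for $n\geq 2$ because the surface measure $\sim\varepsilon^{n-1}$ absorbs the bounded integrand as $\varepsilon\searrow 0$. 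This already settles the lemma in the case $n=1$, since the factor $(n-1)$ makes the $r^{-2}$ term in \eqref{hardy_int} disappear.

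For the $r^{-2}$ part, needed only when $n\geq 3$, I would combine the classical radial Hardy identity
\begin{equation*}
(\partial_r u)^2 \geq \tfrac{(n-2)^2}{4}\,r^{-2}u^2 \;-\; \tfrac{n-2}{2}\,\nabla^\beta\!\bigl(r^{-1}u^2\,\nabla_\beta r\bigr),
\end{equation*}
obtained by expanding $(\partial_r u - \tfrac{n-2}{2}\,u/r)^2\geq 0$ together with $\Box r = (n-1)/r$, with the substitution $\partial_r u = D_r u - \kappa u/y$. Expanding the square, rewriting the cross term via $2uD_r u = \partial_r(u^2) + 2\kappa u^2/y$, and then converting $\partial_r(u^2)/y$ using the spacetime identity
$\partial_r(u^2)/y = -\nabla^\beta(y^{-1}u^2\nabla_\beta y) - y^{-2}u^2 - (n-1)y^{-1}r^{-1}u^2$,
I would arrive at a pointwise identity of the form
\begin{equation*}
(D_r u)^2 \geq (\kappa^2-\kappa)\,y^{-2}u^2 \;+\; \tfrac{(n-2)^2}{4}\,r^{-2}u^2 \;+\; (n-1)|\kappa|\,y^{-1}r^{-1}u^2 \;+\; \nabla^\beta X^\beta,
\end{equation*}
where $X$ is a purely spatial vector field and where every bulk coefficient is nonnegative for $\kappa<0$ and $n\geq 2$. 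Integrating and once more checking the vanishing of the boundary contributions---at $r=0$ the factor $\varepsilon^{n-1}$ dominates the $\varepsilon^{-1}u^2$ blowup precisely because $n\geq 3$, while $r=1$ is handled exactly as in Step~1---yields the desired $r^{-2}$ bound and completes the proof.

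The main technical obstacle is the delicate vanishing of the boundary integrals at $r=1$, where $y^{-1}u^2$ is naively singular; here it is essential to invoke the full asymptotic statement of Definition~\ref{admissible}, and not merely $\mc D_\kappa u=0$, in order to extract the precise rate $u\sim y^{1-\kappa}$ that renders every weighted trace integrable and vanishing at $y=0$.
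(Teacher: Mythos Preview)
Your argument is essentially correct and in one respect more careful than the paper's. The paper claims to obtain both the $y^{-2}$ and the $r^{-2}$ control from a \emph{single} application of Proposition~\ref{P.Hardy} with $q=1$, asserting the pointwise bound
\[
(D_r u)^2 \;\geq\; \tfrac{1}{8}(1-2\kappa)^2\, y^{-2}u^2 \;+\; \tfrac{n-1}{9}\, r^{-2}u^2 \;+\; \tfrac{1-2\kappa}{2}\,\nabla^\beta\bigl(y^{-1}\nabla_\beta y\cdot u^2\bigr),
\]
and then integrating and discarding the inner boundary term (which has a favourable sign). But Proposition~\ref{P.Hardy} with $q=1$ actually produces the bulk term $(n-1)(\tfrac12-\kappa)\,y^{-1}r^{-1}u^2$, not $r^{-2}u^2$; since $y^{-1}r^{-1}\sim r^{-1}$ as $r\to 0$, this does not pointwise dominate $r^{-2}$, so the paper's displayed inequality does not follow as stated. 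Your separate treatment of the $r^{-2}$ piece via the classical radial Hardy inequality, combined with the substitution $\partial_r u = D_r u - \kappa y^{-1}u$ and Step~1 to absorb the resulting $y^{-2}u^2$ correction, is longer but genuinely closes this gap. The computation you sketch is correct: one indeed arrives at
\[
(D_r u)^2 \;\geq\; \kappa(\kappa-1)\,y^{-2}u^2 \;+\; \tfrac{(n-2)^2}{4}\,r^{-2}u^2 \;+\; (n-1)|\kappa|\,y^{-1}r^{-1}u^2 \;+\; \text{divergence},
\]
with every bulk coefficient nonnegative for $-\tfrac12<\kappa<0$.

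One small correction: your surface-measure argument for the inner boundary in Step~1 does not apply when $n=1$, since $\varepsilon^{n-1}=1$ there. Instead observe that on $\Gamma_\varepsilon^-$ one has $\nabla_\nu y = +1$ (cf.\ \eqref{bdrylim_0}), so the inner boundary contribution of the divergence term equals $(\tfrac12-\kappa)\int_{\Gamma_\varepsilon^-} y^{-1}u^2 \geq 0$ and can simply be discarded. This sign observation works for all $n$ and is in fact how the paper handles that term.
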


\begin{proof}
The inequality \eqref{hardy}, with $q=1$, yields
\[
( D_r u )^2 \geq \frac{1}{8} ( 1 - 2 \ka )^2 \frac{ u^2 }{ y^2 } + \frac{ (n-1) }{9} \frac{ u^2 }{ r^2 } + \frac{ ( 1 - 2 \ka ) }{2} \nab^\be ( \nab_\be y \cdot y^{-1} u^2 ) \text{.}
\]
Letting $0 < \varepsilon \ll 1$ and integrating the above over $\mc{C} \cap \{ t_0 < t < t_1 \}$ yields
\begin{align*}
\int_{ \mc{C}_\varepsilon \cap \{ t_0 < t < t_1 \} } ( D_r u )^2 &\geq C \int_{ \mc{C}_\varepsilon \cap \{ t_0 < t < t_1 \} } \left( \frac{1}{ y^2 } + \frac{ n - 1 }{ r^2 } \right) u^2 \\
&\qquad - \frac{ ( 1 - 2 \ka ) }{2} \int_{ \Gamma_\varepsilon^+ \cap \{ t_0 < t < t_1 \} } y^{-1} u^2 \\
&\qquad + \frac{ ( 1 - 2 \ka ) }{2} \int_{ \Gamma_\varepsilon^- \cap \{ t_0 < t < t_1 \} } y^{-1} u^2 \\
&\geq C \int_{ \mc{C}_\varepsilon \cap \{ t_0 < t < t_1 \} } \left( \frac{1}{ y^2 } + \frac{ n - 1 }{ r^2 } \right) u^2 \\
&\qquad - \frac{ ( 1 - 2 \ka ) }{2} \int_{ \Gamma_\varepsilon^+ \cap \{ t_0 < t < t_1 \} } y^{-1} u^2 \text{.}
\end{align*}
(Here, we have also made use of the identities \eqref{bdrylim_0}.)
Letting $\varepsilon \searrow 0$ and recalling that $u$ is boundary admissible results in the estimate \eqref{hardy_int}.
\end{proof}

We will also need the following energy estimate for solutions to \eqref{weqn}: 

\begin{lemma} \label{L.energy}
Assume the hypotheses of Theorem \ref{T.Observability}.
Then,
\begin{equation}
\label{energyineq}
E_1 [u] ( t_1 ) \leq e^{ M | t_1 - t_0 |} E_1 [u] ( t_0 ) \text{,} \qquad t_0, t_1 \in ( -T, T ) \text{,}
\end{equation}
where the constant $M$ depends on $n$, $\kappa$, $X$, and $V$.
\end{lemma}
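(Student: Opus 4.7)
The plan is to obtain a differential inequality of the form $|\partial_t E_1[u](t)| \lesssim E_1[u](t)$ and conclude by Gr\"onwall. The multiplier will be $\partial_t u$ applied to the equation~\eqref{weqn}.

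First, I would differentiate $E_1[u](t)$ directly under the integral sign, using that $D_t = \partial_t$ and that the spatial twisted operators $D_r$ and $\snabla$ commute with $\partial_t$ (since $y$ is $t$-independent). Writing $\Box_y = -\partial_{tt} + \bar{D}_i D^i$ and integrating by parts spatially against $\partial_t u$ should yield an energy identity of the shape
\begin{equation*}
\partial_t E_1[u](t) = -2\int_{B_1}\partial_t u \cdot \Box_y u + 2\int_{B_1} u\,\partial_t u + 2\int_{\partial B_1}\partial_t u \cdot D_r u\,.
\end{equation*}
Substituting~\eqref{weqn} together with~\eqref{Box_y_kappa} rewrites $\Box_y u = D_X u + \widetilde V u$ with $\widetilde V := V + (n-1)\ka/(ry)$.

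For the bulk terms, $|X|\lesssim 1$ gives $|D_X u| \lesssim |\partial_t u| + |D_r u| + |\snabla u|$, and Cauchy--Schwarz produces $|\int \partial_t u \cdot D_X u|\lesssim E_1[u]$. For the potential I would use the elementary bound $1/(ry)\lesssim 1/y + 1/r$ (valid on $B_1$ since $r+y=1$), which combined with~\eqref{hypXV} gives $|\widetilde V|\lesssim 1/y + 1/r$. Lemma~\ref{L.hardy} then provides $\|u/y\|_{L^2(B_1)}^2 + \|u/r\|_{L^2(B_1)}^2 \lesssim \|D_r u\|_{L^2(B_1)}^2$, so Cauchy--Schwarz absorbs $|\int \partial_t u \cdot \widetilde V u|$ into $E_1[u]$; the term $2\int u \partial_t u$ is trivial.

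The only delicate point is the boundary contribution $\int_{\partial B_1}\partial_t u \cdot D_r u$, which must vanish by boundary admissibility. The asymptotic $u \sim B(t,\omega) y^{1-\ka}$ forced by $\mc{D}_\ka u = 0$ together with the finiteness of $\mc{N}_\ka u$ produces $D_r u \sim (2\ka-1) B y^{-\ka}$ and $\partial_t u \sim (\partial_t B) y^{1-\ka}$, whose product carries a factor $y^{1-2\ka}\to 0$ (since $\ka < \tfrac12$). To make this rigorous I would run the identity above on the truncated domain $\mc{C}_\varepsilon$ from~\eqref{C_eps} and pass to the limit $\varepsilon\searrow 0$, controlling the resulting boundary limits in the style of Lemma~\ref{L.bdrylim}. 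With the differential inequality $|\partial_t E_1[u]| \leq M E_1[u]$ in hand, Gr\"onwall delivers~\eqref{energyineq}. The main obstacle is precisely this rigorous vanishing of the boundary term at $\partial B_1$; conceptually it is immediate, but the clean argument requires the $\varepsilon$-regularization and the admissibility limits of Definition~\ref{admissible}, entirely analogous to the bookkeeping already carried out in the proof of Lemma~\ref{L.bdrylim}.
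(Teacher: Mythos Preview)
Your proposal is correct and follows essentially the same route as the paper: differentiate the (truncated) energy, integrate by parts against $\partial_t u$, control bulk terms via Cauchy--Schwarz and Lemma~\ref{L.hardy}, kill the boundary term using the $\varepsilon$-regularization together with admissibility, and conclude by Gr\"onwall. The only item you do not mention explicitly is the inner boundary contribution on $\Gamma_\varepsilon^-$ near $r=0$, which the paper handles by symmetry (or the volume factor $\varepsilon^{n-1}$ when $n\geq 3$); this is routine.
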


\begin{proof}
We assume for convenience that $t_0 < t_1$; the opposite case can be proved analogously.
By a standard density argument, we can assume $u$ is smooth within $\mc{C}$.
Fix now a sufficiently small $0 < \varepsilon \ll 1$, and define
\begin{equation}
\label{energy_0} E_{ 1, \varepsilon } [u] ( \tau ) = \int_{ \mc{C}_\varepsilon \cap \{ t = \tau \} } ( ( \pd_t u )^2 + ( D_r u )^2 + | \snabla u |^2 + u^2 ) \text{.}
\end{equation}

Differentiating $E_{ 1, \varepsilon } [u]$ and integrating by parts, we obtain, for any $\tau \in ( -T, T )$,
\begin{align}
\label{energy_1} \frac{d}{ d \tau } E_{ 1, \varepsilon } [u] (\tau) &= 2 \int_{ \mc{C}_\varepsilon \cap \{ t = \tau \} } ( \pd_{tt} u \pd_tu + D^j u D_j \pd_t u + u \pd_t u ) \\
\notag &= - 2 \int_{ \mc{C}_\varepsilon \cap \{ t = \tau \} } \pd_t u ( \Box_y u - u ) + 2 \int_{ \Ga_\varepsilon \cap \{ t = \tau \} } \partial_t u D_\nu u \text{.}
\end{align}
Note that \eqref{Box_y_kappa}, \eqref{weqn}, and \eqref{hypXV} imply
\begin{align*}
| \Box_y u | &\lesssim \left| D_X u + V u + \frac{ ( n - 1 ) \kappa }{ r y } u \right| \\
&\lesssim | \pd_t u | + | \snabla u | + | D_r u | + \left( \frac{1}{y} + \frac{n-1}{r} \right) |u| \text{.}
\end{align*}
Combining the above with \eqref{energy_1} yields
\begin{align*}
\frac{d}{ d \tau } E_{ 1, \varepsilon } [u] (\tau) &\leq C \cdot E_1 [u] (\tau) + C \cdot E_1^{ \frac{1}{2} } [u] (\tau) \left[ \int_{ \mc{C} \cap \{ t = \tau \} } \left( \frac{1}{ y^2 } + \frac{ n - 1 }{ r^2 } \right) u^2 \right]^{ \frac{1}{2} } \\
\notag &\qquad + 2 \int_{ \Ga_\varepsilon \cap \{ t = \tau \} } \partial_t u D_\nu u \text{.}
\end{align*}

Next, integrating the above in $\tau$ and applying Lemma \ref{L.hardy}, we obtain
\begin{equation}
\label{energy_2} E_{ 1, \varepsilon } [u] ( t_1 ) \leq E_1 [u] ( t_0 ) + C \int_{ t_0 }^{ t_1 } E_1 [u] (\tau) \, d \tau + 2 \int_{ \Ga_\varepsilon \cap \{ t_0 < t < t_1 \} } \partial_t u D_\nu u \text{.}
\end{equation}
Since $u$ is boundary admissible, it follows that
\begin{equation}
\label{energy_3} \lim_{ \varepsilon \searrow 0 } \int_{ \Ga_\varepsilon^+ \cap \{ t_0 < t < t_1 \} } \partial_t u D_\nu u = 0 \text{.}
\end{equation}
Moreover, since $\nu$ points radially along $\Gamma_\varepsilon^-$, then by symmetry,
\begin{equation}
\label{energy_4} \lim_{ \varepsilon \searrow 0 } \int_{ \Ga_\varepsilon^- \cap \{ t_0 < t < t_1 \} } \partial_t u D_\nu u = 0 \text{.}
\end{equation}
(Alternatively, when $n > 1$, we can also use \eqref{bdrylim_ia}.)

Letting $\varepsilon \searrow 0$ in \eqref{energy_2} and applying \eqref{energy_3}--\eqref{energy_4}, we conclude that
\[
E_1 [u] ( t_1 ) \leq E_1 [u] ( t_0 ) + C \int_{ t_0 }^{ t_1 } E_1 [u] (\tau) \, d \tau \text{.}
\]
The estimate \eqref{energyineq} now follows from the Gr\"onwall inequality.
\end{proof}

\subsection{Proof of Theorem \ref{T.Observability}}

Assume the hypotheses of Theorem \ref{T.Observability}, and set
\begin{equation}
\label{estobs_c} c = \begin{cases}
  \frac{ 1 }{ 4 \sqrt{3} \cdot T} & \quad n \geq 4 \\
  \min \left\{ \frac{1}{4 \sqrt{15} \cdot T}, \frac{|\ka|}{120} \right\} & \quad n = 3 \\
  \frac{ 1}{ 4 \sqrt{15} \cdot T} & \quad n = 1
\end{cases} \text{.}
\end{equation}
Note, in particular, that \eqref{estobs_c} and \eqref{obstime} imply that the conditions \eqref{eqc} hold.

Moreover, we define the function $f$ as in the statement of Theorem \ref{T.Carleman}, with $c$ as in \eqref{estobs_c}.
Then, direct computations, along with \eqref{obstime}, imply that
\[
\inf_{ \mc{C} \cap \{ t = 0 \} } f \geq - ( 1 + 2 \kappa )^{-1} \text{,} \qquad \sup_{ \mc{C} \cap \{ t = \pm T \} } f < - ( 1 + 2 \kappa )^{-1} \text{.}
\]
Hence, one can find constants $0 < \delta \ll T$ and $\mu_\kappa > ( 1 + 2 \kappa )^{-1}$ such that
\begin{equation} \label{estobs_f}
\begin{cases}
	f \leq - \mu_\ka & \quad \text{when } t \in ( -T, -T + \delta ) \cup ( T - \delta, T ) \\ 
	f \geq - \mu_\ka & \quad \text{when } t \in ( -\delta, \delta )
\end{cases} \text{.}
\end{equation}

In addition, we define the shorthands
\begin{equation}
\label{estobs_IJ} I_\delta = [ -T + \delta, T - \delta ] \text{,} \qquad J_\delta = ( -T, -T +\delta ) \cup ( T - \de, T ) \text{.}
\end{equation}
We also let $\xi \in C^\infty ( \bar{\mc{C}} )$ be a cutoff function satisfying:
\begin{enumerate}[i)]
\item $\xi$ depends only on $t$.

\item $\xi = 1$ when $t \in I_\delta$.

\item $\xi = 0$ near $t = \pm T$.
\end{enumerate}
We can then apply the Carleman inequality in Theorem \ref{T.Carleman}, with our above choice \eqref{estobs_c} of $c$ and to the function $\xi u$, in order to obtain
\begin{align}
\label{estobs1} &\la \int_\Ga e^{ 2 \lambda f } \xi^2 ( \mc{N}_\ka u )^2 + \int_{ \mc{C} } e^{ 2 \la f } | \Box_\ka ( \xi u ) |^2 \\
\notag &\quad \gtrsim \la \int_{ \mc{C} } e^{ 2 \la f } [ | \partial_t ( \xi u ) |^2 + \xi^2 | \snabla u |^2 + \xi^2 ( D_r u )^2 + \la^2 y^{ -1 + 6 \kappa } \xi^2 u^2 ] \\
\notag &\quad \gtrsim \la \int_{ I_\delta \times B_1 } e^{ 2 \la f } [ ( \partial_t u )^2 + | \snabla u |^2 + ( D_r u )^2 + \la^2 y^{ -1 + 6 \kappa } u^2 ] \text{.}
\end{align}

Moreover, noting that
\begin{align*}
| \Box_\ka ( \xi u ) | &\lesssim | \xi \Box_\ka u | + | \partial_t \xi | \partial_t u | + | \partial_t^2 \xi | | u | \\
&\lesssim | \Box_\ka u | + | \partial_t u | + | u | \text{,}
\end{align*}
and recalling \eqref{hypXV} and \eqref{estobs_f}, we derive that
\begin{align*}
\int_{ \mc{C} } e^{ 2 \la f } | \Box_\ka ( \xi u ) |^2 &\lesssim \int_{ I_\delta \times B_1 } e^{ 2 \la f } | \Box_\ka u |^2 + \int_{ J_\delta \times B_1 } e^{ 2 \lambda f } ( | \Box_\ka u | + | \partial_t u | + | u | ) \\
&\lesssim \int_{ I_\delta \times B_1 } e^{ 2 \lambda f } ( | \partial_t u |^2 + | D_r u |^2 + | \snabla u |^2 ) \\
&\qquad + \int_{ I_\delta \times B_1 } \left( \frac{1}{ y^2 } + \frac{ n - 1 }{ r^2 } \right) ( e^{ \lambda f } u )^2 \\
&\qquad + e^{ - 2 \lambda \mu_\kappa } \int_{ J_\delta \times B_1 } ( | \partial_t u |^2 + | D_r u |^2 + | \snabla u |^2 ) \\
&\qquad + e^{ - 2 \lambda \mu_\kappa } \int_{ J_\delta \times B_1 } \left( \frac{1}{ y^2 } + \frac{ n - 1 }{ r^2 } \right) u^2 \text{,}
\end{align*}
where the implicit constants of the inequalities depend also on $X$ and $V$.
Applying Lemma \ref{L.hardy} and recalling the definition of $f$, the above becomes
\begin{align}
\label{estobs2} \int_{ \mc{C} } e^{ 2 \la f } | \Box_\ka ( \xi u ) |^2 &\lesssim \int_{ I_\delta \times B_1 } [ e^{ 2 \lambda f } ( | \partial_t u |^2 + | D_r u |^2 + | \snabla u |^2 ) + | D_r ( e^{ \lambda f } u ) |^2 ] \\
\notag &\qquad + e^{ - 2 \lambda \mu_\kappa } \int_{ J_\delta \times B_1 } ( | \partial_t u |^2 + | D_r u |^2 + | \snabla u |^2 ) \\
\notag &\lesssim \int_{ I_\delta \times B_1 } e^{ 2 \lambda f } ( | \partial_t u |^2 + | D_r u |^2 + | \snabla u |^2 + \lambda^2 y^{ 4 \kappa } u^2 ) \\
\notag &\qquad + e^{ - 2 \lambda \mu_\kappa } \int_{ J_\delta } E_1 [ u ] ( \tau ) \, d \tau \text{,}
\end{align}

Combining the inequalities \eqref{estobs1} and \eqref{estobs2} and letting $\la$ be sufficiently large (depending also on $X$ and $V$), we then arrive at the bound
\begin{align*}
&\la \int_\Ga e^{ 2 \lambda f } ( \mc{N}_\ka u )^2 + e^{ -2 \la \mu_\ka } \int_{ J_\de } E_1 [u] ( \tau ) \, d \tau \\
\notag &\quad \gtrsim \la \int_{ I_\delta \times B_1 } e^{ 2 \la f } ( | \partial_t u |^2 + | \snabla u |^2 + | D_r u |^2 + \la^2 y^{ 6 \ka - 1 } u^2 )
\end{align*}
Further restricting the domain of the integral in the right-hand side to $( - \delta, \delta ) \times B_1$ and recalling the lower bound in \eqref{estobs_f}, the above becomes
\begin{equation}
\label{obsest3} \la \int_\Ga e^{ 2 \lambda f } ( \mc{N}_\ka u )^2 + e^{ -2 \la \mu_\ka } \int_{ J_\de } E_1 [u] ( \tau ) \, d \tau \gtrsim \la e^{ - 2 \lambda \mu_\ka } \int_{ - \delta }^\delta E_1 [ u ] ( \tau ) \, d \tau \text{.}
\end{equation}

Finally, the energy estimate \eqref{energyineq} implies 
\[
e^{-M T} E_1 [u] (0)\leq E_1[u](t)\leq e^{M T}E_1[u](0) \text{,}
\]
which, when combined with \eqref{obsest3}, yields
\begin{equation}
\label{obsest4} \la \int_\Ga e^{ 2 \lambda f } ( \mc{N}_\ka u )^2 + \delta e^{ -2 \la \mu_\ka } e^{ M T } \cdot E_1 [u] ( 0 ) \gtrsim \la \delta e^{ - 2 \lambda \mu_\ka } e^{ - M T } \cdot E_1 [ u ] ( 0 ) \text{.}
\end{equation}
Taking $\lambda$ in \eqref{obsest4} large enough such that $e^{ 2 M T } \ll \lambda$ results in \eqref{Observability}.

\section*{Acknowledgments}

A.E.\ and B.V.\ are supported by the ERC Starting Grant~633152 and by the
ICMAT--Severo Ochoa grant SEV--2015--0554.
A.S.\ is supported by the EPSRC grant EP/R011982/1.
The authors also thank two anonymous referees for their helpful comments.

\raggedbottom


\begin{thebibliography}{20}\frenchspacing

\bibitem{Arons}
N. Aronszajn, A unique continuation theorem for solutions of elliptic partial differential equations or inequalities of second order, J. Math. Pures Appl. (9) 36 (1957), 235--249.

\bibitem{AS}
S.~Alexakis and A.~Shao,
Global uniqueness theorems for linear and nonlinear waves,
J. Func. Anal. 269 (2015) 3458--3499.

\bibitem{ASS}
S. Alexakis, V. Schlue, A. Shao,
Unique continuation from infinity for linear waves,
Adv. Math. 286 (2016) 481--544.




\bibitem{Bachelot}
A. Bachelot,
The Klein--Gordon equation in the anti-de Sitter cosmology,
J. Math. Pures Appl. 96 (2011) 527-–554.


\bibitem{BBE}
L.~Baudouin, M.~de~Buhan, S.~Ervedoza,
Global Carleman estimates for waves and applications,
Comm. Partial Differential Equations 38 (2013) 823--859.

\bibitem{BZuazua}
U. Biccari, E. Zuazua,
Null controllability for a heat equation with a singular inverse-square potential involving the distance to the boundary function,
J. Differential Equations 261 (2016) 2809--2853.

\bibitem{Vega}
J. Barcel\'o, A. Ruiz, L. Vega, 
Some dispersive estimates for Schr\"{o}dinger equations with repulsive
potentials,
J. Funct. Anal. 236 (2006) 1--24.

\bibitem{BLR}
C. Bardos, G. Lebeau, J. Rauch,
Sharp sufficient conditions for the observation, control, and
stabilization of waves from the boundary,
SIAM J. Control Optim. 30 (1992) 1024--1065.



\bibitem{BG}
N. Burq, P. G\'erard,
Condition n\'ecessaire et suffisante pour la contr\^olabilit\'e exacte
des ondes,
C. R. Acad. Sci. Paris S\'er. I Math. 325 (1997) 749--752.


\bibitem{Burq2}
N. Burq, F. Planchon, J. Stalker, S. Tahvildar-Zadeh,
Strichartz estimates for the wave and Schr\"odinger equations with potentials of critical decay,
Indiana Univ. Math. J. 3 (2004) 1665--1680.



\bibitem{Carleman}
T. Carleman, Sur un probl\'eme d'unicit\`e pour les syst\'emes d\`equations aux d\'eriv\'ees partielles \`a deux variables ind\'ependentes, Ark. Mat., Astr. Fys. 26 (1939) 1--9.


\bibitem{DosSantos}
D. Dos Santos Ferreira, Sharp $L^p$ Carleman estimates and unique
continuation, Duke Math. J. 129 (2005) 503--550.

\bibitem{DZZ}
T. Duyckaerts, X. Zhang, E. Zuazua,
On the optimality of the observability inequalities for parabolic and
hyperbolic systems with potentials,
Ann. Inst. H. Poincar\'e Anal. Non Lin\'eaire, 25 (2008) 1--41.

\bibitem{EF}
  L. Escauriaza, F. J. Fern\'andez,
  Unique continuation for parabolic operators,
  Ark. Mat. 41 (2003) 35--60.

\bibitem{Vergara}
A. Enciso, M.d.M. Gonz\'alez, B. Vergara, Fractional powers of the
wave operator via Dirichlet-to-Neumann maps in anti-de Sitter spaces,
J. Funct. Anal. 273 (2017) 2144--2166.

\bibitem{Enciso}
A. Enciso, N. Kamran,
A singular initial-boundary value problem for nonlinear wave equations
and holography in asymptotically anti-de Sitter spaces,
J. Math. Pures Appl. 103 (2015) 1053--1091.

\bibitem{Enciso2}
A. Enciso, N. Kamran,
Lorentzian Einstein metrics with prescribed conformal infinity,
J. Differential Geom., in press (arXiv:1412.4376).




\bibitem{Gueye}
M. Gueye,
Exact boundary controllability of 1-D parabolic and hyperbolic degenerate
equations,
SIAM J. Control Optim. 52 (2014) 2037--2054.



\bibitem{HS}
G. Holzegel, A. Shao,
Unique Continuation from Infinity in Asymptotically Anti-de Sitter
Spacetimes,
Comm. Math. Phys. 347 (2016) 723--775.

\bibitem{HS2}
G. Holzegel, A. Shao,
Unique continuation from infinity in asymptotically Anti-de Sitter spacetimes II: Non-Static Boundaries,
Comm. PDE 42 (2017) 1871--1922.

\bibitem{Hormander}
L. H\"{o}rmander,
{\em The Analysis of Linear Partial Differential Operators IV},
Springer-Verlag, Berlin, 1985.

\bibitem{Keith}
  T. Ozawa, K. M. Rogers,
  Sharp Morawetz estimates,
  J. Anal. Math. 121 (2013) 163--175.



\bibitem{KT}
H. Koch, D. Tataru, Carleman estimates and unique continuation for second-order elliptic equations with nonsmooth coefficients, Comm. Pure Appl. Math. 54 (2001) 339--360. 


\bibitem{KT3}
H. Koch, D. Tataru, Dispersive estimates for principally normal
pseudodifferential operators, Comm. Pure Appl. Math. 58 (2005)
217--284.


\bibitem{KT2}
H. Koch, D. Tataru, Carleman estimates and unique continuation for second order parabolic equations with nonsmooth coefficients, Comm. Partial Differential Equations. 34 (2009) 305--366. 

\bibitem{IK}
A.D. Ionescu, S. Klainerman,
On the uniqueness of smooth, stationary black holes in vacuum,
Invent. Math. 175 (2009) 35--102.

\bibitem{LO}
  E. M. Landis, O. A. Ole\u{i}nik,
  Generalized analyticity and some related properties of solutions of elliptic and parabolic equations, Russian Math. Surv. 29 (1974), 195--212.

\bibitem{LTX}
  I. Lasiecka, R. Triggiani, X. Zhang,
  Nonconservative wave equations with unobserved Neumann BC: Global uniqueness and observability in one shot,
  Contemp. Math. 268 (2000) 227--326.

\bibitem{Lions1}
J.L. Lions,
\emph{Controlabilit\'e exacte perturbations et stabilisation de syst\`emes
distribu\'es}, Masson, Paris, 1988.


\bibitem{LLZ}
A L\'opez, X. Zhang, E. Zuazua, Null controllability of the heat equation as singular limit of the exact controllability of dissipative wave equations, J. Math. Pures Appl. 79 (2000) 741--808.

\bibitem{Macia}
F. Macia, E. Zuazua, On the
lack of observability for wave equations: a Gaussian beam approach,
Asympt. Anal. 32 (2002) 1--26.



\bibitem{Morawetz}
  C. S. Morawetz,
  Time decay for the nonlinear Klein-Gordon equations,
  Proc. Roy. Soc. Ser. A 306 (1968) 291--296.

\bibitem{Sogge}
C. D. Sogge, Strong uniqueness theorems for second order elliptic differential equations, Amer. J. Math. 112 (1990) 943--984.

\bibitem{Tao}
T. Tao,
{\em Nonlinear Dispersive Equations: Local and Global Analysis},
American Mathematical Society, 2006.

\bibitem{Tataru_Thesis}
D. Tataru, A priori pseudoconvexity energy estimates in domains with boundary and applications to exact boundary controllability for conservative partial differential equations, PhD Thesis, University of Virginia, May 1992.


\bibitem{Tataru0}
D. Tataru, A-priori estimates of Carleman's type in domains with boundaries, J. Math. Pures Appl. 73 (1994) 355--387.

\bibitem{Tataru1}
D. Tataru, Unique continuation for solutions to PDE’s: between
H\"{o}rmander's theorem and Holmgren's theorem, Comm. Partial
Differential Equations. 20 (1995) 855--884.


\bibitem{Tataru}
D. Tataru, The $X_\theta^s$ spaces and unique continuation for solutions to the semilinear wave equation, Comm. Partial
Differential Equations 21 (1996) 841--887.


\bibitem{VZuazua}
J. Vancostenoble, E. Zuazua,
Hardy Inequalities, Observability, and Control for the Wave and
Schrödinger Equations with Singular Potentials,
SIAM J. Math. Anal. 41 (2009) 1508--1532.



\bibitem{Warnick}
C. Warnick, 
The massive wave equation in asymptotically AdS spacetimes,
Comm. Math. Phys. 321 (2013) 85--111.




\bibitem{Zhang}
X. Zhang,
Explicit observability inequalities for the wave equation with lower order terms by means of Carleman inequalities,
SIAM J. Control Optim. 39 (2001), 812--834.

\end{thebibliography}
\end{document}